\title[Automorphisms of the plane preserving a curve]
{Automorphisms of the plane preserving a curve}
\author{J\'er\'emy Blanc And Immanuel Stampfli}
\address{Mathematisches Institut,
Universit\"at Basel, Rheinsprung 21, CH-4051 Basel}
\email{Jeremy.Blanc@unibas.ch}
\address{Mathematisches Institut,
Universit\"at Basel, Rheinsprung 21, CH-4051 Basel}
\email{Immanuel.E.Stampfli@gmail.com}
\thanks{The authors gratefully acknowledge support by the Swiss National Science Foundation Grants ``Birational Geometry" 128422 and ``Automorphisms of Affine n-Space" 137679.}
\theoremstyle{plain}
\newtheorem{thm}{Theorem}
\newtheorem{prop}{Proposition}[section]
\newtheorem{lem}[prop]{Lemma}
\newtheorem{cor}[prop]{Corollary}
\theoremstyle{definition}
\newtheorem{defn}[prop]{Definition}
\newtheorem{exa}[prop]{Example}
\theoremstyle{remark}
\newtheorem{rem}[prop]{Remark}
\newcommand{\name}[1]{\textsc{#1\/}}
\renewcommand{\AA}{{\mathbb A}}
\newcommand{\A}{{\mathbb A}}
\newcommand{\PP}{\mathbb P}
\newcommand{\p}{\mathbb P}
\newcommand{\Gal}{\mathrm{Gal}}
\newcommand{\K}{\mathbf k}
\newcommand{\FF}{\mathbb F}
\newcommand{\GG}{\mathbb G}
\newcommand{\bir}{\dashrightarrow}
\DeclareMathOperator{\h}{ht}
\DeclareMathOperator{\len}{len}
\DeclareMathOperator{\Aut}{Aut}
\DeclareMathOperator{\Bir}{Bir}
\DeclareMathOperator{\Aff}{Aff}
\DeclareMathOperator{\GL}{GL}
\DeclareMathOperator{\PGL}{PGL}
\DeclareMathOperator{\J}{J}
\newcommand{\KK}{\overline{{\mathbf k}}}
\DeclareMathOperator{\id}{id}
\renewcommand{\char}{\textrm char}
\renewcommand{\H}{H}
\newcommand{\tr}{\textcolor{black}}
\newcommand{\lab}[1]{\label{#1}}
\begin{document}

\begin{abstract}
We study the group of automorphisms of the affine plane preserving some given curve, over any field. The group is proven to be algebraic, except in the case where the curve is a bunch of parallel lines. Moreover,
a classification of the groups of positive dimension occuring is also given in the case where the curve is geometrically irreducible and the field is perfect.
\end{abstract}

\subjclass[2010]{14R10, 14R20, 14H37, 14H50, 14J50, 14E07}
\maketitle

\section{Introduction}
\label{Intro.sec}
Let $\K$ be an arbitrary field. This article studies (closed) curves $\Gamma\subset\A^2=\A^2_\K$ and the group of automorphisms of $\A^2$ (defined over~$\K$) which preserve this curve. {We will denote this group} by $\Aut(\A^2,\Gamma)$. In other words, we study polynomials in $\K[x,y]$ and the $\K$-algebra automorphisms of $\K[x,y]$ that send the polynomial on a multiple of itself.
We will always assume that the curve is reduced, i.e.\;that the polynomial does not contain any multiple factor. For our purpose, this is a natural assumption.

If $\Gamma$ has equation in $\K[x]$, we will say that $\Gamma$ is a \emph{fence}.
{In this case}, $\Aut(\A^2,\Gamma)$ is easy to describe, and {it} is in fact a countable union of algebraic groups. Our main result consists in showing that this is the only case where such phenomenon occurs.

\medskip

Recall that $\Aut(\A^2)$ has the structure of an ind-variety. More precisely, the set $\Aut(\A^2)_{d}$ of automorphisms of degree $\le d$ is an algebraic variety and $\Aut(\A^2)_{d}$ is closed in $\Aut(\A^2)_{d+1}$ for any $d$. This gives to $\Aut(\A^2)=\bigcup_{d=1}^{\infty} \Aut(\A^2)_d$ the structure of an ind-variety, and since composition and taking inverse preserve this structure, $\Aut(\A^2)$ is an ind-group (see \cite[Chapter IV]{Kum02} for precise definitions of ``ind-variety" and ``ind-group").

 By definition, an algebraic subgroup of $\Aut(\A^2)$ is a closed subgroup of bounded degree.
Let us recall that the group $\Aut(\A^2)$ contains the following natural algebraic subgroups
\[
	\begin{array}{rcl}
		\Aff(\A^2)&=&\{(x,y)\mapsto (ax+by+e,cx+dy+f)\ |\ a,b,c,d,e,f\in \K, \\
				&& \ ad-bc \not=0\}, \linebreak
		\medskip \\
		\J_n&=&\{(x,y)\mapsto (ax+P(y),by+c)\ |\ a,b\in \K^{*}, c\in \K, P\in \K[y], \\
		 	  && \ \deg(P)\le n\}.
	\end{array}
\]
Moreover, $\Aut(\A^2)$ is generated by the union of these groups (Jung - van der Kulk's Theorem \cite{Jun42}, \cite{VdK53}), and any algebraic subgroup of $\Aut(\A^2)$ is conjugated to a subgroup of $\Aff(\A^2)$ or $\J_n$ for some $n$ \cite[Theorem 4.3]{Kam79}.

\medskip

For any curve $\Gamma\subset \A^2$, the group $\Aut(\A^2,\Gamma)$ is a closed ind-subgroup of $\Aut(\A^2)$. The following result describes when $\Aut(\A^2,\Gamma)$ is an algebraic group, i.e.\;when it has bounded degree.

%BEFORE: A natural question is, to find sufficient and necessary 
% conditions such that this subgroup is algebraic i.e.\;has bounded degree.
%This is answered by the following result.

\begin{thm}\label{Thm:Main}
Let $\Gamma$ be a  curve in $\A^2$. Applying an automorphism of $\A^2$, one of the following holds:

\begin{enumerate}[$i)$]
\item
The curve $\Gamma$ has equation  $F(x)=0$, where $F(x)\in \K[x]$ is a square-free
polynomial and 
\[
	\begin{array}{rcl}
	\Aut(\A^2,\Gamma) &=& \{(x,y)\mapsto (ax+b,cy+P(x))\mid a,c\in \K^{*} , \, b \in \K \, , \, \\
	&& \ P\in 	\K[x] \, , \ F(ax+b)/F(x)\in \K^{*}\} \, .
	\end{array}
\]
\item
The group $\Aut(\A^2,\Gamma)$ is equal to 
\[
	\{g\in \Aff(\A^2) \mid g(\Gamma)=\Gamma\}
\quad
\textrm{or} 
\quad
	\{g\in \J_n\mid g(\Gamma)=\Gamma\}
\] 
for some integer $n$. Moreover, the action of  $\Aut(\A^2,\Gamma)$ on $\Gamma$ gives an isomorphism of $\Aut(\A^2,\Gamma)$ with a closed subgroup of $\Aut(\Gamma)$, $($this latter being an algebraic group$)$.
\end{enumerate} 
In particular, $\Aut(\A^2,\Gamma)$ is an algebraic group if and only if there is no automorphism of $\A^2$ which sends $\Gamma$ onto a fence.
\end{thm}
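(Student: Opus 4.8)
The plan is to establish the dichotomy $i)$/$ii)$ directly, after which the concluding ``in particular'' statement reads off immediately. Write $G=\Aut(\A^2,\Gamma)$ and fix a squarefree $Q\in\k[x,y]$ with $\Gamma=\{Q=0\}$; since $\k[x,y]^{*}=\k^{*}$ and $Q$ is squarefree, an automorphism $g$ lies in $G$ exactly when $g^{*}Q=\mu Q$ for some $\mu\in\k^{*}$. I would first dispose of the fence case. If $\Gamma=\{F(x)=0\}$ with $F\in\k[x]$ and $g=(g_1,g_2)\in G$, then $F\circ g=\mu F$, so $g$ sends the fibre $F^{-1}(t)$ to $F^{-1}(\mu t)$; as these fibres are disjoint unions of vertical lines and $g$ carries irreducible curves to irreducible curves, each vertical line maps onto a vertical line. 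Hence $g_1\in\k[x]$ and induces an automorphism of the base $\A^1$, so $g_1=ax+b$; triangularity then forces $g_2=cy+P(x)$ with $c\in\k^{*}$, and membership in $G$ amounts to $F(ax+b)=\mu F(x)$. This is exactly the group in $i)$, and since $P(x)$ ranges over all of $\k[x]$ it has unbounded degree, hence is not algebraic.

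For the non-fence situation I would work with the amalgamated structure underlying Jung--van der Kulk, $\Aut(\A^2)=\Aff(\A^2)*_{\Aff(\A^2)\cap\J}\J$ with $\J=\bigcup_n\J_n$, and its Bass--Serre tree $T$, whose vertices are the conjugates of $\Aff(\A^2)$ and of $\J$. The crucial step, and the main obstacle, is to prove that $G$ fixes a vertex of $T$. I would do this by constructing a $G$-invariant, proper, convex function $v\mapsto d_v(\Gamma)$ on $T$ recording the degree of the closure of $\Gamma$ in the completion of $\A^2$ attached to the vertex $v$: invariance is immediate from $g(\Gamma)=\Gamma$, while convexity and properness force the minimum locus to be a nonempty bounded subtree, whose center is a $G$-fixed vertex. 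Proving the convexity estimate is where the real work lies, and it is precisely what rules out the alternative possibilities that $G$ contain a loxodromic element or fix only an end of $T$. Granting this, $G$ is conjugate into $\Aff(\A^2)$ or into $\J$.

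It then remains to bound the degree. After conjugation assume $G\subseteq\Aff(\A^2)$ or $G\subseteq\J$. In the affine case every element has degree $1$. In the de Jonquières case write $Q=\sum_{j=0}^{d}q_j(y)x^{j}$ with $q_d\neq0$ and $d=\deg_x Q$. For $g=(ax+P(y),by+c)\in G$ with $\deg P\ge1$, reading $g^{*}Q=\mu Q$ in the coefficient of $x^{0}$, i.e. evaluating at $x=0$, gives $Q(P(y),by+c)=\mu q_0(y)$; the left-hand side has leading term $q_d(by+c)P(y)^{d}$ of $y$-degree $\deg q_d+d\deg P$, which cannot cancel once $\deg P$ is large, so $d\deg P\le\deg q_0\le\deg Q$. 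Thus either $d=0$, whence $Q\in\k[y]$ and $\Gamma$ is a fence after swapping $x$ and $y$ (case $i)$), or $d\ge1$ and $\deg P\le(\deg Q)/d$ for every $g\in G$, so $G\subseteq\J_{\lfloor\deg Q/d\rfloor}$ is algebraic. I emphasize that this computation is characteristic-free, so no perfectness or char-$p$ hypothesis is needed here.

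In the bounded non-fence case $G$ is a closed subgroup of the algebraic group $\Aff(\A^2)$ or $\J_n$ (cut out by the condition $g^{*}Q\in\k^{*}Q$), hence algebraic; Kambayashi's theorem confirms the conjugation into a single factor. Since $G$ contains \emph{every} automorphism preserving $\Gamma$ that lies in that factor, we get $G=\{g\in\Aff(\A^2)\mid g(\Gamma)=\Gamma\}$ or $\{g\in\J_n\mid g(\Gamma)=\Gamma\}$, as in $ii)$. For the action on $\Gamma$, the restriction $\rho\colon G\to\Aut(\Gamma)$ is a morphism of algebraic groups, and I would verify $\ker\rho=\{\id\}$ by showing that a nontrivial automorphism fixing $\Gamma$ pointwise forces $\Gamma$ to be a graph $x=h(y)$ or $y=h(x)$, hence a fence after a shear; thus faithfulness holds exactly in case $ii)$. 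Injectivity makes $\rho$ identify $G$ with a closed subgroup of the algebraic group $\Aut(\Gamma)$, proving the final assertion of $ii)$. The ``in particular'' statement now follows: if some automorphism sends $\Gamma$ to a fence we are in case $i)$ and $G$ is not algebraic, and otherwise we are in case $ii)$ and $G$ is algebraic.
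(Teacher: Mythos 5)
Your overall architecture (explicit fence computation, reduction of $G=\Aut(\A^2,\Gamma)$ into $\Aff(\A^2)$ or $\J$, degree bound inside $\J$, injectivity of the action on $\Gamma$) is reasonable, and the first and last steps essentially match the paper's. But there is a genuine gap at the center: the claim that $v\mapsto d_v(\Gamma)$ is a well-defined, $G$-invariant, \emph{convex and proper} function on the Bass--Serre tree is asserted, not proved --- you yourself flag it as ``where the real work lies'' --- and this claim \emph{is} the theorem. Concretely: (a) the vertices stabilized by conjugates of $\J=\bigcup_n\J_n$ carry no single completion of $\A^2$ (only $\J_n$ equals $\Aut(\FF_n,E_n\cup L_{\FF_n})$), so ``the degree of the closure of $\Gamma$ in the completion attached to $v$'' is undefined at half the vertices; (b) properness is \emph{false} in general: for a fence the degree is constant along an infinite ray of the tree, so properness is logically equivalent to the non-fence dichotomy you are trying to establish, and nothing in your outline extracts it --- your ``bounded minimum locus'' simply begs the question; (c) no stated estimate rules out loxodromic (H\'enon-type) elements preserving $\Gamma$. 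The paper's Section~3 is precisely the substitute for this missing step: every $g\in\Bir((X,B),\Gamma)$ is decomposed into a reduced chain of elementary links between natural completions (Propositions~\ref{PROP:ExistenceDecomposition} and~\ref{PROP:ReducedDecomposition}), the key Proposition~\ref{PROP:Key} tracks the height $\h_\Gamma(p)$ of boundary singularities to show the proper base-point of $g$ is forced to be the most singular point of $\Gamma\cap L_X$ (Corollary~\ref{COR:Key}), and the descent on the intersection numbers $(\Gamma_i\cdot L_{X_i})_{p_i}$ in Propositions~\ref{PROP:Main1} and~\ref{PROP:Main2} yields exactly the alternative ``conjugate into some $\Aut(X',B')$, or $\Gamma$ becomes a fence''. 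Any honest proof of your convexity/properness estimate would have to reproduce this local analysis of how $\Gamma$ meets the boundary under links, so as written the proposal presupposes the theorem's core.

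The remaining steps have smaller, repairable defects. Your degree bound via evaluation at $x=0$ fails when $q_0=0$, i.e.\ when the line $x=0$ is a component of $\Gamma$: this genuinely occurs, e.g.\ $Q=x(x-y^k)$ is preserved by $(x,y)\mapsto(-x+y^k,y)$, where $\deg P=k$ while your displayed inequality $d\deg P\le\deg q_0$ is vacuous; one must instead argue that $\sum_{j\ge1}q_j(by+c)P(y)^j$ cannot vanish once $\deg P>\max_j\deg q_j$, which restores $\deg P\le\deg Q$. In characteristic $p$, ``triangularity'' does not follow from the Jacobian condition ($\partial g_2/\partial y\in\k^{*}$ permits $y^p$-terms); argue instead that $g_2(s,\cdot)$ must have degree $1$ in $y$ for every $s\in\kk$. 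Finally, your kernel computation for $\rho\colon G\to\Aut(\Gamma)$ should actually enumerate the fixed loci of nontrivial elements of $\Aff(\A^2)$ and $\J_n$ (a point, a line, a graph $x=P(y)/(1-a)$, or the horizontal lines $P(y)=0$), which is exactly the paper's closing observation that pointwise-fixed curves are fences in suitable coordinates; with that list in hand, faithfulness in case $ii)$ and the ``in particular'' statement follow as you say.
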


Remark that the existence of an automorphism of $\A^2$ which sends $\Gamma$ onto a line implies that $\Gamma\simeq \A^1$, and that the converse is true {in} characteristic $0$ by the Abhyankar-Moh-Suzuki Theorem (\cite{Suz74}, \cite{AbMo75}), but false in general (last page of \cite{Nag71}). So Theorem~\ref{Thm:Main} implies, in positive characteristic, that non-trivial embeddings of $\A^1$ into $\A^2$ are rigid in the sense that the image of the curve is preserved by only a few automorphisms of $\A^2$, namely an algebraic group.

Another consequence of Theorem~\ref{Thm:Main} is that the fixed locus of an automorphism of $\A^2$ only contains points and curves equivalent to lines, a result already observed by Friedland and Milnor in \cite{FM89}, in the case where $\K=\mathbb{C}$ (see \cite{Jel03} for some generalisations to higher {dimensions}). 

Remark that in the case where $\K=\mathbb{C}$, the observation on fixed points of Friedland and Milnor, the  Abhyankar-Moh-Suzuki Theorem and the Lin-Zaidenberg Theorem \cite{ZL83} imply Theorem~\ref{Thm:Main} in the case where the curve 
$\Gamma$ has an irreducible component non-isomorphic to $\mathbb{C}^{*}$. 
The interesting case of Theorem~\ref{Thm:Main}, for $\K = \mathbb{C}$, is thus the description of $\Aut(\A^2,\Gamma)$ when $\Gamma$ is isomorphic to $\mathbb{C}^{*}$ (or a union of such curves). Note that there exist only partial classifications of the closed embeddings of $\mathbb{C}^{*}$ into $\A^2$, which are moreover very involved 
(see \cite{CNKR09}, \cite{BoZo10} and more recently \cite{KPR14}).
Hence, one cannot  check a few cases to derive Theorem~\ref{Thm:Main}.
Moreover, note
that there exist complicated torsion-abelian closed subgroups of $\Aut(\A^2)$ which are not conjugated to a subgroup of $\Aff(\A^2)$ or to a subgroup of 
the de Jonqui\`eres group $\bigcup_{n=1}^{\infty} \J_n$ (see \cite{Wri79}). A priori, such groups could preserve curves isomorphic to $\A^1\setminus\{0\}$, but
Theorem~\ref{Thm:Main} implies that these groups do not preserve any algebraic curve.

\medskip

The proof of Theorem~\ref{Thm:Main} is done in Section~\ref{Sec:ProofThm}, using tools of birational geometry of surfaces introduced in Section~\ref{Sec:RemBir}.

In Section~\ref{Sec:Class}, we refine the theorem by describing more precisely the possibilities for the group $\Aut(\A^2,\Gamma)$, in the case where $\Gamma$ is geometrically irreducible and the ground field $\K$ is perfect. We obtain then the following result (the tori $T$ of cases $iv)$, $v)$ are precisely described in Proposition~\ref{Prop:Torus}).
\begin{thm}\label{Thm:Class}
Let $\Gamma$ be a geometrically irreducible closed curve in $\A^2$, defined over a perfect field $\K$. Applying an automorphism of $\A^2$, one of the following holds:

\begin{enumerate}[$i)$]
\item
The curve $\Gamma$ is the line {with} equation $x=0$ and 
\[
	\Aut(\A^2,\Gamma)=\{(x,y)\mapsto (ax,by+P(x))\mid a,b\in \K^{*} , \, P\in \K[x]\} \, .
\]
\item
The curve $\Gamma$ has equation {$x^b=\lambda y^a$}, where $\lambda\in \K^{*}$ and $a,b> 1$ are coprime integers. Moreover, $\Aut(\A^2,\Gamma)$ is equal to the group $\K^{*}$ acting diagonally via $(x,y)\mapsto (t^ax ,t^by)$.
\item
The curve $\Gamma$ has equation $x^by^a=\lambda$, where $\lambda\in \K^{*}$ and $a,b\ge 1$ are coprime integers. Moreover, $\Aut(\A^2,\Gamma)$ contains the group $\K^{*}$ acting diagonally via $(x,y)\mapsto (t^ax ,t^{-b}y)$, and is equal to this group if $(a,b)\not=(1,1)$, or is the group $\K^{*}\rtimes \mathbb{Z}/2\mathbb{Z}$ generated {by $\K^\ast$ and by} $(x,y)\mapsto (y,x)$ if $(a,b)=(1,1)$.
\item
The curve $\Gamma$ has equation $\lambda x^2+\nu y^2=1$, where 
			$\lambda,\nu\in \K^{*}$, $-\lambda\nu$ is not a square in $\K$ and $\char(\K)\not=2$. The group $\Aut(\A^2,\Gamma)$ is the subgroup of $\GL(2,\K)$ preserving the form $\lambda x^2+\nu y^2$. It is isomorphic to $T\rtimes \mathbb{Z}/2\mathbb{Z}$ for some non-$\K$-split torus $T$.\item
The curve $\Gamma$ has equation  $ x^2+\mu xy+y^2=1$, where 
			$\mu \in \K^{*}$ and $x^2+\mu x+1$ has no root in $\K$, and $\char(\K)=2$. The group $\Aut(\A^2,\Gamma)$ is the subgroup of $\GL(2,\K)$ preserving the form $x^2+\mu xy+y^2$. It is isomorphic to $T\rtimes \mathbb{Z}/2\mathbb{Z}$ for some non-$\K$-split torus $T$.
\item
The group $\Aut(\A^2,\Gamma)$ is a zero-dimensional $($hence finite$)$ subgroup of $\Aff(\A^2)$ or $\J_n$ for some $n$.

\end{enumerate} 
\end{thm}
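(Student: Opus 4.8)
The plan is to derive Theorem~\ref{Thm:Class} from Theorem~\ref{Thm:Main} by a case analysis governed by the abstract isomorphism type of $\Gamma$ and by the dimension of $G:=\Aut(\A^2,\Gamma)$. First I would dispose of the fence case: if, after an automorphism of $\A^2$, the curve $\Gamma$ has an equation $F(x)=0$, then geometric irreducibility forces $F$ to be, up to a scalar, $x-c$, so $\Gamma$ is a single line and, after a translation and a scaling, the line $x=0$; feeding $F(x)=x$ into the formula of Theorem~\ref{Thm:Main}$\,(i)$ yields exactly the group of case $i)$. In every remaining situation Theorem~\ref{Thm:Main}$\,(ii)$ applies, so $G$ is an algebraic group, equal to the stabiliser of $\Gamma$ in $\Aff(\A^2)$ or in some $\J_n$, and restriction to $\Gamma$ identifies $G$ with a closed subgroup of the algebraic group $\Aut(\Gamma)$. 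If $\dim G=0$ we are in case $vi)$; from now on I assume $\dim G>0$.

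The next step is to extract the isomorphism type of $\Gamma$ from the existence of a positive-dimensional symmetry. Since $\k$ is perfect, the normalisation $\tilde\Gamma\to\Gamma$ is a smooth geometrically irreducible affine curve on which $G$ acts, and the action extends to the smooth projective model $\bar\Gamma$ while preserving the nonempty Galois-stable set $S=\bar\Gamma\smallsetminus\tilde\Gamma$ of points at infinity, together with the finite support of the conductor of the normalisation. Because $\Aut(\bar\Gamma)$ is finite as soon as $\bar\Gamma$ has genus $\ge 1$, and because the stabiliser in $\PGL_2$ (or in the automorphism group of a conic without rational point) of a reduced subscheme of degree $\ge 3$ is finite, the hypothesis $\dim G>0$ forces $\bar\Gamma$ to have genus $0$, forces $\Gamma$ to be smooth away from at most one point, and forces $\deg S\in\{1,2\}$. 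This leaves exactly three configurations: $S$ is one $\k$-rational point, so $\tilde\Gamma\cong\A^1$; $S$ is a pair of $\k$-rational points, so $\tilde\Gamma\cong\GG_m$; or $S$ is a conjugate pair forming a single closed point of degree $2$, so $\tilde\Gamma$ is a nonsplit form of $\GG_m$.

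I would then pin down the equation of $\Gamma$ and the shape of $G$ in each configuration, the workhorse being the linearisation of a torus. When $\tilde\Gamma\cong\A^1$ or $\tilde\Gamma\cong\GG_m$, the identity component $G^\circ$ either contains a split torus $\GG_m$, or is unipotent; in the unipotent case the inclusion $G^\circ\subseteq\Aff(\A^2)$ or $G^\circ\subseteq\J_n$ from Theorem~\ref{Thm:Main} makes the $\GG_a$-action triangular, whence an invariant irreducible non-fence curve is a line and we are back in case $i)$. Otherwise I linearise the torus: since every $\GG_m$-action on $\A^2$ is linearisable, after a further automorphism of $\A^2$ the torus acts by $(x,y)\mapsto(t^px,t^qy)$, and an irreducible invariant curve is quasi-homogeneous. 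Up to exchanging and rescaling the coordinates this makes $\Gamma$ a coordinate graph $x=\lambda y^a$ (equivalent to the line $x=0$, i.e.\ the already-treated case $i)$), a unicuspidal curve $x^b=\lambda y^a$ with $a,b>1$ coprime, or a curve $x^ay^b=\lambda$ with $a,b\ge1$ coprime. In the cuspidal case the normalisation is $\A^1$ and every automorphism must fix the preimage of the cusp, so $G=\GG_m$ as in $ii)$; in the case $x^ay^b=\lambda$ the normalisation is $\GG_m$ and $G=\GG_m$, enlarged by the involution $(x,y)\mapsto(y,x)$ precisely when $(a,b)=(1,1)$, giving $iii)$. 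When $\tilde\Gamma$ is the nonsplit form, $\Gamma$ is a smooth affine conic, $G^\circ$ is the nonsplit torus $T$ stabilising the degree-$2$ point at infinity, and identifying $G$ with the orthogonal group of the corresponding binary quadratic form yields $T\rtimes\mathbb{Z}/2\mathbb{Z}$; distinguishing $\char(\k)\ne2$ from $\char(\k)=2$, where a nondegenerate binary form must carry a cross term, produces the normal forms of cases $iv)$ and $v)$.

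The remaining work---checking that the displayed groups are the full stabilisers rather than proper subgroups, computing the component groups, and describing the nonsplit tori through Proposition~\ref{Prop:Torus}---is routine once the normal form is fixed. I expect the real obstacle to sit in the third paragraph, in the rigidity step turning ``$\Gamma$ carries a positive-dimensional symmetry'' into ``$\Gamma$ is one of these quasi-homogeneous normal forms.'' The delicate ingredients are the linearisation of the torus (and the disposal of a purely unipotent $G^\circ$ inside $\Aff$ or $\J_n$), and the Galois-descent bookkeeping over an arbitrary perfect field that is needed to pin down the nonsplit tori and the component groups in cases $iii)$--$v)$; note that this is exactly the route by which the argument avoids the Abhyankar--Moh theorem, unavailable in positive characteristic, where a smooth $\A^1\subset\A^2$ need not be rectifiable.
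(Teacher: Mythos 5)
Your outline reproduces the paper's own route almost step for step: reduce via Theorem~\ref{Thm:Main} to an algebraic subgroup of $\Aff(\A^2)$ or $\J_n$; classify geometrically irreducible affine curves with positive-dimensional automorphism group through the normalisation and the Galois-stable set of points at infinity (Lemma~\ref{LEM:AutoCurve}); split the identity component into torus versus unipotent, kill the unipotent case by triangularity of $\GG_a$-actions over a perfect field (Proposition~\ref{Prop:MiyaPerfect}, Lemma~\ref{Lem:GaAffLine}); linearise the torus and do Galois descent on the points at infinity (Lemma~\ref{Lem:GL2}, Proposition~\ref{Prop:Torus}). So the architecture is right. Two steps that you defer as ``known'' or ``routine'' are, however, exactly where the paper has to work, and one of them is a genuine gap as written.

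First, ``every $\GG_m$-action on $\A^2$ is linearisable'' is not available as a black box in this setting: the torus $T$ may be a non-split $\k$-form of $\GG_m$, and the paper's Lemma~\ref{Lem:GL2} proves the conjugation into $\GL(2,\k)$ by locating a $T$-fixed $\k$-rational point on the boundary of a natural completion, using Galois-invariance of the fixed locus and the triviality of forms of $\A^1$ over a perfect field; only afterwards does one diagonalise over $\kk$ and descend. You flag this as delicate, which is fair, but it is a proof you still owe. Second, and more seriously, verifying that the displayed groups are the \emph{full} stabilisers is not routine in case $iii)$: you must exclude an automorphism of $\A^2$ that induces on $\Gamma\colon x^by^a=\lambda$ the involution of $\GG_m$ swapping the two points at infinity when $(a,b)\neq(1,1)$. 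Such an automorphism need not extend to $\p^2$, so the obstruction is invisible to $\Aut(\Gamma)$ and to any purely local computation at infinity; the paper rules it out by decomposing the induced birational self-map of $(\p^2,L_{\p^2})$ into elementary links and tracking heights and multiplicities at the boundary via Corollary~\ref{COR:Key}, concluding that the image of $\Gamma$ would retain a singular boundary point at the image of $(1:0:0)$. Without an argument of this type your case $iii)$ for $(a,b)\neq(1,1)$ is not established.
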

A similar description was also done in the case $\K=\mathbb{C}$ and where the curve is connected and simply connected in \cite{AZ13}, and more generally for such curves on affine toric surfaces. The intersection of the two studies give the cases $i)$ and $ii)$ above.

\bigskip

We would like to thank the referee for his helpful remarks and corrections.

\section{Reminders on completions of $\A^2$}\label{Sec:RemBir}
\label{RemCompletions.sec}
In this section, we define natural completions of $\AA^2$, and 
links between them.
\subsection{Natural completions of $\A^2$}
\label{NatCompletions.subsec}

\begin{exa}
	\label{Exa:P2}
	The  morphism
	\[
		\begin{array}{rcl}
			\AA^2&\hookrightarrow &\PP^2\\
			(x,y)&\mapsto &(x:y:1)
		\end{array}
	\]
	yields an isomorphism 
	$\AA^2\stackrel{\sim}{\to} \PP^2\backslash L_{\p^2}$, where 
	$L_{\p^2}$ is the line of $\PP^2$ with equation $z=0$.
\end{exa}

\begin{exa}
	\label{Exa:Hirzebruch}
	For $n \ge 1$, the $n$-th Hirzebruch surface $\FF_n$ is
	\[
		\FF_n=\{((a:b:c),(u:v))\in \PP^2 \times \PP^1\ |\ bv^n=cu^n\}.
	\]
	Let $E_n, L_{\mathbb{F}_n} \subset \FF_n$ be the curves given 
	by $(1:0:0) \times \PP^1$ and $v=0$, respectively.
	The  morphism
	\[
		\begin{array}{rcl}
			\AA^2&\hookrightarrow & \FF_n \\
			(x,y)&\mapsto &((x:y^n:1),(y:1))
		\end{array}
	\]
	gives an isomorphism 
	$\AA^2\stackrel{\sim}{\to} \FF_n \backslash (E_n\cup L_{\FF_n})$.
	Note that $E_n$ is the unique section of $\pi\colon \FF_n\to \p^1$ of 
	self-intersection $-n$, and $L_{\FF_n}$ {has  
	self-intersection $0$ since it is a smooth fibre}.
\end{exa}

\begin{rem}
	In Example~\ref{Exa:Hirzebruch}, we could also have chosen 
	$n=0$, which yields the surface $\FF_0$, isomorphic to 
	$\PP^1\times \PP^1$, via  $((x:y:z),(u:v))\mapsto ((x:y),(u:v))$, 
	but we will not need this one in the sequel.
\end{rem}

\begin{defn}
	A \emph{natural completion} (of $\AA^2$) 
	is a pair $(X,B)$ 
	which is given in Example~\ref{Exa:P2} or Example~\ref{Exa:Hirzebruch}: 
	either $(X,B)=(\PP^2,L_{\p^2})$ or 
	$(X,B)=(\FF_n,E_n\cup L_{\FF_n})$ for some $n\ge 1$. 
	The isomorphism $\A^2\to X\backslash B$ given in the examples will be 
	called \emph{canonical isomorphism}, or 
	\emph{canonical embedding of $\A^2$ into $X$}.

	A birational map (respectively birational morphism) $(X,B) \bir (X',B')$ 
	between two natural completions is a birational map 
	(respectively birational morphism) $X \bir X'$ inducing an 
	isomorphism $X\backslash B\to X'\backslash B'$.

	We denote by $\Aut(X,B)$ the group of automorphisms of 
	$(X,B)$: it is the group of automorphisms of $X$ which leave 
	$B$  (or equivalently $X\setminus B=\AA^2$) invariant.
	
\end{defn}
\begin{rem}
Given two natural completions $(X,B)$, $(X',B')$, any birational map $\varphi\colon (X,B)\dasharrow (X',B')$ restricts to an isomorphism $X\backslash B\to X'\backslash B'$, and corresponds thus, via the canonical isomorphisms $\A^2\simeq X\backslash B$ and $\A^2\simeq X'\backslash B'$, to {a unique} 
automorphism of $\A^2$. Moreover, every automorphism of $\A^2$ is
obtained {in this way.}
\end{rem}

Let us recall some easy fact on the automorphisms of natural completions:
\begin{lem}
\lab{LEM:AutoNatCompl}
Let $(X,B)$ be a natural completion of $\A^2$, and let  $\iota\colon \A^2 \to X\setminus B$ be the associated canonical 
isomorphism. The group $\Aut(X,B)$ 
corresponds via $\iota$ to the following subgroups of $\Aut(\AA^2)$ $($see the introduction for the definition of $\Aff(\A^2)$ and $\J_n)$:

$1)$ If $X=\p^2$, then $\Aut(X,B)=\Aff(\A^2)\simeq \GL(2,\K)\ltimes \K^2$

$2)$ If $X=\FF_n$, then 
	$\Aut(X,B)=\J_n\simeq (\K^\ast)^2 \ltimes (\K \ltimes \K^{n+1}) $.
\end{lem}
\begin{proof}
The first assertion follows from the fact that $\Aut(\p^2)$ is the group of linear automorphisms, equal to $\PGL(3,\K)$.

Recall that for $n \ge 1$ the $\p^1$-bundle structure on the Hirzebruch surface
$\FF_n$ given by the projection on the second factor is unique. Hence, the group of automorphisms of $\FF_n$ preserves the $\p^1$-bundle structure. In particular, {the restriction of each  automorphism of 
$\Aut(\FF_n, E_n \cup L_{\FF_n})$ is an automorphism of $\A^2$ that
preserves the fibration $\AA^2 \to \AA^1, (x,y)\mapsto y$}. The precise description of the degree follows from a straightforward calculation.
\end{proof}

\subsection{Elementary links}
\label{ElemLinks.subsec}
There are two kinds of very simple birational maps between natural completions. The first one are automorphisms, that we described in Lemma~\ref{LEM:AutoNatCompl}, and the second one
are elementary links, that we describe now. The results of this section are classical, we just remind them to the reader for self-containedness and in order to fix notation.

\begin{defn} \label{Def:Elmlinks}
	A \emph{link} is a birational map 
	$\varphi\colon (X,B)\dasharrow (X',B')$ between two natural completions, 
	which is not an isomorphism,  such that both $\varphi$ and $\varphi^{-1}$ 
	have at most one $\KK$-base-point 
	(and in particular there is no  infinitely near base-point). An \emph{elementary link} 
	is a link  which does not decompose into $\varphi=\varphi'\circ \varphi''$, 
	where $\varphi',\varphi''$ are links.
\end{defn}
\begin{rem}\label{Rem:ComposLinkAuto}
It follows from Definition~\ref{Def:Elmlinks} that if $\varphi\colon (X,B)\dasharrow (X',B')$ is an elementary link, then so is $\alpha\varphi\beta$, where $\alpha\in\Aut(X',B'), \beta\in \Aut(X,B)$. 
\end{rem}

The following lemma makes a precise description of our elementary links; this shows in particular that such maps are some of the elementary links which appear in the minimal model program (see \cite[Theorem of page 225]{Cor95} and~\cite[Definition~2.2, page 597]{Isk96}):
	
\begin{lem}\label{Lem:DescriptElLink}
	Any elementary link is of one of the following three types:
	\begin{enumerate}[$i)$]
 	\item link of type $\mathrm{I}$: a rational map 
	$(\PP^2,L_{\p^2})\dasharrow (\FF_1,E_1\cup L_{\mathbb{F}_1})$ 
	consisting of blowing-up one $\K$-rational point of $L_{\p^2}$.
  
  	\item link of type $\mathrm{II}$: rational maps 
	$(\FF_n,E_n\cup L_{\FF_n})\dasharrow (\FF_{m},E_m\cup L_{\FF_{m}})$ 
	given by the blow-up of a $\K$-rational point $p$ 
	of $L_{\FF_n}$ followed by the contraction of the strict transform of 
	$L_{\FF_n}$. Moreover, $m=n+1$ if  $p\in E_n$ and $m=n-1$ if $p\notin E_n$.
  
  	\item link of type $\mathrm{III}$: a morphism 
	$(\FF_1,E_1\cup L_{\mathbb{F}_1})\to (\PP^2,L_{\p^2})$ given 
	by the contraction of the curve $E_1$ onto a $\K$-rational point of $L_{\p^2}$.
	\end{enumerate}
	The inverse of a link of type $\mathrm{I}$, $\mathrm{II}$, $\mathrm{III}$ is a link of type $\mathrm{III}$, $\mathrm{II}$, $\mathrm{I}$ respectively.
\end{lem}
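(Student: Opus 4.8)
The plan is to resolve an arbitrary elementary link $\varphi\colon (X,B)\bir (X',B')$ by blow-ups and to read off the three possibilities from the geometry of the boundary curves. Since $\varphi$ is a birational map between smooth projective surfaces, it factors as $\varphi=\eta'\circ\eta^{-1}$, where $\eta\colon Z\to X$ and $\eta'\colon Z\to X'$ are the two projections from the minimal resolution $Z$; here $\eta$ is the blow-up of the base-points of $\varphi$ (including infinitely near ones) and $\eta'$ the blow-up of the base-points of $\varphi^{-1}$. The hypothesis that $\varphi$ (resp.\ $\varphi^{-1}$) has at most one $\kk$-base-point and no infinitely near one means precisely that $\eta$ (resp.\ $\eta'$) is either an isomorphism or the blow-up of a single point. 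Because a $\k$-birational map has a Galois-stable base locus, a unique $\kk$-base-point is automatically $\k$-rational; and because $\varphi$ restricts to an isomorphism $X\setminus B\to X'\setminus B'$, any base-point of $\varphi$ lies on $B$ and any base-point of $\varphi^{-1}$ lies on $B'$. As $\varphi$ is not an isomorphism, at least one of $\eta,\eta'$ is a genuine blow-up.

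First I would organize the argument by the pair (number of base-points of $\varphi$, number of base-points of $\varphi^{-1}$), which is $(0,1)$, $(1,0)$ or $(1,1)$. The controlling invariant is the Picard rank: $\rho(\PP^2)=1$ and $\rho(\FF_n)=2$, while blowing up raises $\rho$ by one. In the case $(0,1)$ the map $\varphi$ is itself a birational morphism contracting a single $(-1)$-curve, which must lie in $B$; among the natural completions the only boundary $(-1)$-curve is $E_1\subset\FF_1$ (indeed $L_{\PP^2}^2=1$, $L_{\FF_n}^2=0$ and $E_n^2=-n$), so $\varphi$ is the contraction $(\FF_1,E_1\cup L_{\FF_1})\to(\PP^2,L_{\PP^2})$ of $E_1$, i.e.\ a link of type $\mathrm{III}$. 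The case $(1,0)$ is the inverse situation, forcing $\varphi$ to be a link of type $\mathrm{I}$ from $(\PP^2,L_{\PP^2})$ to $(\FF_1,E_1\cup L_{\FF_1})$; this simultaneously establishes the $\mathrm{I}/\mathrm{III}$ duality claimed at the end.

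The interesting case is $(1,1)$: blow up $p\in B$ to get $Z$, then contract one $(-1)$-curve to reach $X'$. Counting Picard ranks forces both $X$ and $X'$ to be Hirzebruch surfaces $\FF_n,\FF_m$; if $X=\PP^2$ then $Z=\FF_1$, the total transform of $B=L_{\PP^2}$ has components of self-intersection $-1$ (the exceptional divisor $\mathcal E$ of $p$) and $0$ (the strict transform of $L_{\PP^2}$), so the only available contraction is of $\mathcal E$, which undoes the blow-up and gives an isomorphism, a contradiction. For the contracted curve $C$: to induce an isomorphism of complements one needs $\eta^{-1}(B)=\eta'^{-1}(B')$ as subsets of $Z$, so $C$ is a component of the total transform of $B$ different from $\mathcal E$, namely the strict transform of $E_n$ or of $L_{\FF_n}$. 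The strict transform of $E_n$ is a $(-1)$-curve only when $n=1$ and $p\notin E_1$, its self-intersection being $\le -2$ otherwise. Hence, apart from that one exceptional configuration, $C$ must be the strict transform $\tilde L$ of $L_{\FF_n}$, which requires $p\in L_{\FF_n}$ (so that $\tilde L^2=-1$) and produces a link of type $\mathrm{II}$; a short self-intersection computation then gives $m=n+1$ when $p\in E_n$ and $m=n-1$ when $p\notin E_n$.

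The main obstacle is to rule out the remaining configuration, where $n=1$, $p\in L_{\FF_1}\setminus E_1$ and one contracts the strict transform $\tilde E_1$ (of self-intersection $-1$) instead of $\tilde L$. This is a genuine link $(\FF_1,E_1\cup L_{\FF_1})\bir(\FF_1,E_1\cup L_{\FF_1})$, so it is not excluded by the base-point count; what excludes it is elementarity. I would show directly that, after identifying $Z$ with the blow-up of $\PP^2$ at two distinct $\k$-points of a line, this map factors as the contraction of $E_1$, i.e.\ a link of type $\mathrm{III}$, followed by the blow-up of the second point, i.e.\ a link of type $\mathrm{I}$; being decomposable, it is not elementary. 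Finally, in the complementary subcase $n=1$, $p\notin E_1$ with $C=\tilde L$ one would contract onto $\FF_0$, which is not a natural completion and hence yields no link at all. These observations leave exactly the three types, and the stated behaviour of inverses follows because the type $\mathrm{II}$ construction (an elementary transformation of ruled surfaces) is symmetric in source and target.
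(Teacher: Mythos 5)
Your proposal is correct and follows essentially the same route as the paper's proof: factor the link through its resolution, split into cases according to which of $\varphi,\varphi^{-1}$ is a morphism, use the self-intersections of the boundary components to identify the contracted curve, and exclude the $\FF_1\dasharrow\FF_1$ map contracting the strict transform of $E_1$ because it factors through $\PP^2$ as a link of type $\mathrm{III}$ followed by one of type $\mathrm{I}$. Your write-up is merely more explicit on a few points (Galois-rationality of the base-point, the exclusion of $\FF_0$) that the paper leaves implicit or defers.
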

\begin{proof}
Let $\varphi\colon (X,B)\bir (X',B')$ be a birational map, {that} is an elementary link.

Suppose first that $\varphi$ is a morphism, which implies that $\varphi$ is the contraction of a $(-1)$-curve onto a $\K$-rational point. {Hence,} $X=\FF_1$, $X'=\p^2$ and we are in the third case. If $\varphi^{-1}$ is a morphism, we get symmetrically the first case.

In the remaining cases, both $\varphi$ and $\varphi^{-1}$ have exactly one base-point. These points are thus defined over~$\K$, and both maps contract one irreducible curve. The curves have thus either self-intersection $-1$ or self-intersection $0$, depending if they contain the base-points. In the first case, we get a link $(\FF_1,E_1\cup L_{\mathbb{F}_1})\dasharrow (\FF_1,E_1\cup L_{\mathbb{F}_1})$, which is not an elementary link since it factors through $\p^2$ as the composition of links of type $\mathrm{III}$ and $\mathrm{I}$. In the second case, we get a link of type $\mathrm{II}$ described above.
\end{proof}

\begin{prop}
	\lab{PROP:ExistenceDecomposition}
	Let $\varphi \colon (X, B) \bir (X', B')$ be a birational map between
	two natural completions of $\AA^2$. If $\varphi$ is not an isomorphism, then
	there exist $m \geq 1$ and 
	elementary links $\varphi_1, \ldots, \varphi_m$ such that
	$\varphi = \varphi_m \cdots \varphi_1$.
\end{prop}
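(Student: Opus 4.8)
The plan is to induct on the number $N(\varphi)$ of base points of $\varphi$ over $\kk$, counted with infinitely near ones; equivalently, if $\pi\colon Z\to X$ is the minimal resolution of the base points of $\varphi$, then $N(\varphi)$ is the number of blow-ups composing $\pi$, so that $N(\varphi)=\rho(Z)-\rho(X)$. Writing $\varphi=\pi'\circ\pi^{-1}$ with $\pi'\colon Z\to X'$ the induced birational morphism, and setting $B_Z=\pi^{-1}(B)=(\pi')^{-1}(B')$ (the complement of $\A^2$ in $Z$, which is independent of the factorization since $\varphi$ is an isomorphism on $\A^2$), everything is defined over $\k$ because $\varphi$ is. It suffices to produce a single elementary link $\psi\colon(X,B)\bir(X_1,B_1)$ such that $N(\varphi\circ\psi^{-1})=N(\varphi)-1$: then the induction hypothesis applied to $\varphi\circ\psi^{-1}\colon(X_1,B_1)\bir(X',B')$ writes it as a composition of elementary links, and composing with $\psi$ finishes the proof.

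For the base case $N(\varphi)=0$, the map $\varphi$ is a birational morphism. Since $\rho(\PP^2)=1$ and $\rho(\FF_n)=2$, and a non-trivial birational morphism of smooth projective surfaces drops the Picard number by the number of irreducible curves it contracts, $\varphi$ contracts at most one curve; as $\varphi$ is not an isomorphism it contracts exactly one $(-1)$-curve, with $\rho$ dropping from $2$ to $1$, forcing $X=\FF_1$ and $X'=\PP^2$. By the analysis of Lemma~\ref{Lem:DescriptElLink} this contraction is precisely a link of type $\mathrm{III}$, and the base case is done.

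For the inductive step $N(\varphi)\ge 1$, the plan is to exhibit a $\k$-rational proper base point $p$ of $\varphi$ lying on $L_{\p^2}$ if $X=\PP^2$, respectively on $L_{\FF_n}$ if $X=\FF_n$, such that the minimal resolution $\pi$ factors through the blow-up of $p$. I then take $\psi$ to be the link of type $\mathrm{I}$ blowing up $p$ (if $X=\PP^2$), respectively the link of type $\mathrm{II}$ blowing up $p$ and contracting the strict transform of $L_{\FF_n}$ (if $X=\FF_n$), as provided by Lemma~\ref{Lem:DescriptElLink}. Because $\pi$ factors through the blow-up of $p$ and $\psi$ keeps us inside natural completions, the map $\varphi\circ\psi^{-1}$ is resolved by $Z$ (whose minimal model over $X_1$ is obtained by contracting one further boundary curve), and a direct count of base points gives $N(\varphi\circ\psi^{-1})=N(\varphi)-1$, which is the verification to be carried out.

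The main obstacle is the existence of such a point $p$: one must show that among the proper base points of $\varphi$ there is one that is simultaneously $\k$-rational and located on the contractible boundary component ($L_{\p^2}$ or $L_{\FF_n}$), rather than sitting on $E_n$ or only over a non-trivial Galois orbit, so that a single elementary link can be performed while remaining among natural completions. I would establish this by a $\Gal(\kk/\k)$-equivariant analysis of the boundary chain $B_Z$: the $\k$-structure makes the dual graph of $B_Z$ and the loci contracted by $\pi$ and by $\pi'$ stable under $\Gal(\kk/\k)$, and the fibration structure of Lemma~\ref{LEM:AutoNatCompl} identifies $L_{\FF_n}$ as the fibre over the $\k$-rational point of $\p^1$ at infinity. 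Locating a boundary component of $Z$ that is contracted first by $\pi'$ but not by $\pi$ and pushing it down by $\pi$ should yield the desired $\k$-rational point on the correct component; in the $\PP^2$ case one reduces to the $\FF_n$ situation after performing the first link. The remaining verification that this choice strictly decreases $N$ and respects the boundary is then routine, using Lemma~\ref{Lem:DescriptElLink}.
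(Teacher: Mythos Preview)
Your induction scheme has a genuine gap: the quantity $N(\varphi)$ need not strictly decrease after any elementary link. Concretely, take $X=X'=\FF_1$ and let $\varphi$ be the birational self-map obtained by blowing up a $\k$-point $p\in L_{\FF_1}\setminus E_1$ and then contracting the original $E_1$ (this is the non-elementary link $\FF_1\dashrightarrow\FF_1$ that factors through $\PP^2$, cf.\ the proof of Lemma~\ref{Lem:DescriptElLink}). Here $N(\varphi)=1$ and $p$ is the unique proper base point. A type~$\mathrm{II}$ link centred at $p$ would land in $\FF_0$, which is not a natural completion; a type~$\mathrm{II}$ link centred at $E_1\cap L_{\FF_1}$ (not a base point) gives $N(\varphi\psi^{-1})=2$; and the type~$\mathrm{III}$ link contracting $E_1$ gives $N(\varphi\psi^{-1})=1$. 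No elementary link achieves $N(\varphi\psi^{-1})=0$, so your induction stalls. More generally, even when a type~$\mathrm{II}$ link at a base point $p\in L_{\FF_n}$ is available, your claim $N(\varphi\psi^{-1})=N(\varphi)-1$ is not automatic: it requires that the strict transform of $L_{\FF_n}$ in $Z$ is contracted by $\pi'$, which you have not established.

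The paper avoids this by inducting on the \emph{sum} of the numbers of $\kk$-points blown up by $\epsilon$ and by $\eta$ in the minimal resolution $\varphi=\eta\epsilon^{-1}$; a type~$\mathrm{III}$ link decreases the $\eta$-count even though it leaves $N(\varphi)$ unchanged. The paper's mechanism for choosing the link is also different from yours and cleaner: rather than hunting for a base point on $L_X$, it picks a $(-1)$-curve in $Y$ contracted by $\eta$ but not by $\epsilon$, identifies it as the strict transform of some component $E\subset B$, and reads off the link type from $E^2\in\{-1,0,1\}$ (type~$\mathrm{III}$, $\mathrm{II}$, $\mathrm{I}$ respectively). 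This analysis yields a \emph{unique} proper base point in the relevant cases, hence one that is automatically $\k$-rational, so the Galois-equivariant boundary analysis you allude to is never needed; it also gives directly the argument ruling out $\FF_0$ as an intermediate surface.
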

\begin{rem}We call $\varphi = \varphi_m \cdots \varphi_1$ a \emph{decomposition into $m$ elementary links}. If $\varphi$ is an isomorphism, we sometimes say that it decomposes into $0$ elementary links. This is coherent with the fact that the composition of an elementary link with an {isomorphism} of natural 
completions is an elementary link (see Remark~\ref{Rem:ComposLinkAuto}).

Proposition~\ref{PROP:ExistenceDecomposition} implicitly follows from the work made in \cite{Lam02} or from \cite[Theorem 3.0.2]{BlDu}. {However}, the proof given here is direct.
\end{rem}
\begin{proof}
As all birational maps between projective smooth surfaces, $\varphi$ admits a minimal resolution, i.e.\;two birational morphisms $\epsilon\colon Y\to X$, $\eta\colon Y\to X'$ such that $\varphi=\eta\epsilon^{-1}$, and such that no $(-1)$-curve of $Y$ (not necessarily defined over~$\K$) is contracted by both $\epsilon$ and $\eta$. Moreover, each point blown-up by $\eta$ and $\epsilon$ belongs to the boundaries $B,B'$, as proper or infinitely near points.

We proceed by induction on the number of $\KK$-points blown-up by $\eta$ and $\epsilon$, corresponding to the number of base-points of $\varphi^{-1}$ and $\varphi$.
If $\eta$ is an isomorphism, $\varphi$ is only a sequence of blow-ups in the boundary of $B$. Because of the nature of $B$ and $B'$, this implies that $\varphi$ is either an isomorphism or a link of type $\mathrm{I}$, from $X=\p^2$ to $X'=\FF_1$. Similarly, $\varphi$ is an isomorphism or a link of type $\mathrm{III}$ if $\epsilon$ is an isomorphism.

Thus, we may assume that  $\eta$ (respectively $\epsilon$) contracts at least one $(-1)$-curve of $Y$, not contracted by $\epsilon$ (respectively $\eta$), and which is thus the strict transform of an irreducible curve $E\subset B$ (respectively $E'\subset B'$) of self-intersection $\ge -1$. If $E^2=-1$, we factorise $\varphi$ with a link of type $\mathrm{III}$ from $X=\FF_1$ to $\p^2$. We do the same with $\varphi^{-1}$ if $(E')^2=-1$.

Looking at the self-intersections of the curves on the boundaries $B$ and $B'$, the remaining case is: only one (-1)-curve of $Y$ is contracted by $\eta$ (respectively $\varepsilon$) and its image $E$ (respectively $E'$) under
$\varepsilon$ (respectively $\eta$) has self-intersection $\ge 0$.
This implies that $\varphi$ and $\varphi^{-1}$ have exactly one proper base-point. If $E^2=1$, we factorise $\varphi$ with a link of type $\mathrm{I}$ from $X=\p^2$ to $\FF_1$. Otherwise, $E^2=0$ and we factorise $\varphi$ with a link of type $\mathrm{II}$ from $X=\FF_n$ to $\FF_{n'}$ with $n'=n\pm 1$. It remains to see that $n'=0$ is impossible. Indeed, otherwise the $(-1)$-curve of $X$ would not pass through the unique proper base-point of $\varphi$ and would thus be sent by $\varphi$ onto a curve of self-intersection $\ge 0$, not contracted by~$\varphi^{-1}$.
\end{proof}
\begin{cor}
	Let $\varphi \colon (X, B) \bir (X', B')$ be a birational map between
	two natural completions of $\AA^2$. All $\KK$-base-points of $\varphi$ $($that belong to $X$ as proper or infinitely near points$)$ are defined over~$\K$.
\end{cor}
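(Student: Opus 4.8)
The plan is to reduce to the elementary links of Lemma~\ref{Lem:DescriptElLink}, for which the statement is immediate, and then to propagate $\k$-rationality of base-points through compositions. First I would invoke Proposition~\ref{PROP:ExistenceDecomposition} to write $\varphi=\varphi_m\cdots\varphi_1$ as a composition of elementary links $\varphi_i\colon(X_{i-1},B_{i-1})\bir(X_i,B_i)$, with $(X_0,B_0)=(X,B)$ and $(X_m,B_m)=(X',B')$. By Lemma~\ref{Lem:DescriptElLink} each $\varphi_i$ is of type $\mathrm{I}$, $\mathrm{II}$ or $\mathrm{III}$; in every case $\varphi_i$ and $\varphi_i^{-1}$ have at most one base-point, and this point is $\k$-rational by construction. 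Thus the statement holds for $m\le 1$, which is the base of an induction on $m$.

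For the inductive step I would isolate the following stability property: if $f$ and $g$ are birational maps of smooth projective surfaces over $\k$, all of whose base-points (for $f$, $f^{-1}$, $g$, $g^{-1}$) are $\k$-rational, then the same holds for $g\circ f$. Granting this, writing $\varphi=\varphi_m\circ(\varphi_{m-1}\cdots\varphi_1)$ and combining the induction hypothesis for the second factor with the base case for $\varphi_m$ finishes the proof.

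To establish the stability property I would argue with resolutions. Choose minimal resolutions $f=\beta\alpha^{-1}$ and $g=\delta\gamma^{-1}$, where $\alpha,\beta,\gamma,\delta$ are birational morphisms defined over~$\k$. Since the base-points of $f$ and $f^{-1}$ are $\k$-rational, $\alpha$ and $\beta$ are compositions of blow-ups of $\k$-rational points, and likewise for $\gamma,\delta$. A blow-up of a $\k$-rational point has a $\k$-rational exceptional divisor (a copy of $\PP^1$ over $\k$), so at each stage a centre lying over a fixed $\k$-rational point of the target is again $\k$-rational, whether it is a proper or an infinitely near point. Building a common resolution of $g\circ f$ by pulling the blow-ups of $\gamma$ back through $\beta$, I would conclude that the induced birational morphism onto the source is again a composition of blow-ups of $\k$-rational points. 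As every base-point of $g\circ f$ must be blown up in any resolution of $g\circ f$, all these base-points are $\k$-rational; the same argument applied to $(g\circ f)^{-1}$ completes the step.

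The main obstacle is the interaction of consecutive links: when the base-point of $\varphi_i$ lies on a curve contracted by the resolution of $\varphi_{i-1}\cdots\varphi_1$, pulling it back produces infinitely near base-points rather than proper ones, and one must check these remain $\k$-rational. This is exactly what the remark on exceptional divisors of $\k$-rational blow-ups guarantees: the whole tower of centres sits over $\k$-rational points and is therefore $\k$-rational. Alternatively one can phrase the conclusion through Galois descent, since the set of base-points of $\varphi$ is stable under $\Gal(\kk/\k)$ because $\varphi$ is defined over~$\k$, and the decomposition into single-$\k$-point blow-ups forces this Galois action to be trivial; but the resolution bookkeeping above seems to me the most direct route.
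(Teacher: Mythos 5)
Your proof is correct and follows essentially the same route as the paper: decompose $\varphi$ into elementary links via Proposition~\ref{PROP:ExistenceDecomposition} and use that each link's base-points are $\k$-rational by Lemma~\ref{Lem:DescriptElLink}. The paper states this in one line, whereas you additionally spell out the (correct) bookkeeping showing that $\k$-rationality persists for the infinitely near base-points arising when links are composed.
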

\begin{proof}
Follows from Proposition~\ref{PROP:ExistenceDecomposition} and from the fact that the base-points of any elementary link are defined over~$\K$ (Lemma~\ref{Lem:DescriptElLink}).
\end{proof}
Let us recall how the result on elementary links {gives} the proof of Jung - van der Kulk's Theorem (that we will not need in the sequel).
\begin{cor}
The group $\Aut(\A^2)$ is generated by the groups $$\Aff(\A^2)\mbox{ and }\J=\bigcup_{n\ge 1}\J_n.$$
\end{cor}
\begin{proof}
We view  $g\in \Aut(\A^2)$ as a birational map $(\PP^2,L_{\p^2})\dasharrow (\PP^2,L_{\p^2})$. If it is an isomorphism of pairs, then $g\in \Aff(\A^2)$. Otherwise, we use Proposition~\ref{PROP:ExistenceDecomposition} and write it as $g=\varphi_m \cdots \varphi_1$, where the $\varphi_i$ are elementary links. Denote by $r$ the {smallest} integer such that $\varphi_r$ {is} of type $\mathrm{III}$. Then, 
$\varphi_{r-1} \cdots \varphi_2\colon (\FF_1,E_1\cup L_{\FF_1})\dasharrow (\FF_1,E_1\cup L_{\FF_1})$ is a composition of links of type $\mathrm{II}$, which preserves the fibration $\FF_1\to \p^1$ and  the map $\varphi_{r}\cdots\varphi_1\colon (\PP^2,L_{\p^2})\dasharrow (\PP^2,L_{\p^2})$ sends thus the lines through the point $q_1\in L_{\p^2}$ blown-up  by $\varphi_1$ onto the lines through the point $q_2\in L_{\p^2}$ blown-up by $(\varphi_r)^{-1}$. Choosing $\alpha,\beta\in \Aut(\PP^2,L_{\p^2})$ that send respectively $q_1,q_2$ onto $(0:1:0)$, the map $\beta\varphi_{r}\cdots\varphi_1\alpha^{-1}$ preserves the lines through $(0:1:0)$. Hence, its restriction to $\A^2$ belongs to $\J$. Since the restriction of $\alpha,\beta$ belongs to $\Aff(\A^2)$, the map $\varphi_{r}\cdots\varphi_1$ belongs to the group generated by $\Aff(\A^2)$ and $\J$. Proceeding by induction on $m$, we get the result.
\end{proof}
For our purpose, we will need the decomposition into elementary links given by Proposition~\ref{PROP:ExistenceDecomposition}, but we will also need to have more precise information on the composition of links and their base-points, provided by Lemma~\ref{Lem:SmallReduction} and Proposition~\ref{PROP:ReducedDecomposition} below.

\begin{lem}\label{Lem:SmallReduction}
Let $\varphi\colon (X_0,B_0)\bir (X_1,B_1)$ and $\psi\colon (X_1,B_1)\bir (X_2,B_2)$ be two elementary links. The birational map $\psi \circ \varphi$ is an isomorphism if and only if one of the following occurs:
\begin{enumerate}[$i)$]
\item {$X_1=\FF_1$, $X_0=X_2=\p^2$ 
$($i.e.\;{$\psi$ and $\varphi^{-1}$ are both of type $\mathrm{III}$}$)$}
\item  {The maps} $\psi$ and $\varphi^{-1}$ are both of type $\mathrm{I}$ or both of type $\mathrm{II}$, and share the same base-point.
\end{enumerate}
Moreover, if $X_1=\FF_{n}$ and $X_0=X_2=\FF_{n+1}$, for some $n\ge 1$, the map $\psi\circ \varphi$ is always an isomorphism.
\end{lem}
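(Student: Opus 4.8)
The plan is to reduce the whole statement to a single principle. First I would observe that $\psi\circ\varphi$ is an isomorphism of completions if and only if there is an isomorphism of natural completions $\gamma$ with $\psi=\gamma\circ\varphi^{-1}$: for the forward direction take $\gamma=\psi\circ\varphi$, and the converse is immediate since then $\psi\circ\varphi=\gamma$. Now $\psi$ and $\varphi^{-1}$ are two elementary links issued from the common source $(X_1,B_1)$, and since an isomorphism of completions preserves base-points, the heart of the matter is the following claim: an elementary link issued from $(X_1,B_1)$ is determined, up to post-composition with an isomorphism of natural completions, by its type and by its (unique) base-point on $X_1$. Granting this, $\psi\circ\varphi$ is an isomorphism exactly when $\psi$ and $\varphi^{-1}$ have the same type and the same base-point, and it only remains to read off cases $i)$ and $ii)$.

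To prove the claim I would argue type by type, using the uniqueness of blow-ups, the uniqueness of the contracted curves, and Lemma~\ref{LEM:AutoNatCompl}. A link of type $\mathrm{III}$ contracts the unique $(-1)$-curve $E_1\subset\FF_1$ and has no base-point, so two such contractions differ by an automorphism of $\PP^2$ fixing $L_{\p^2}$, i.e.\ by an element of $\Aut(\PP^2,L_{\p^2})$. A link of type $\mathrm{I}$ is the blow-up of a $\k$-rational point of $L_{\p^2}$, and two blow-ups of the same point are canonically isomorphic by the universal property, the isomorphism preserving the boundary. A link of type $\mathrm{II}$ with base-point $p\in L_{\FF_n}$ first performs the (canonical) blow-up of $p$ and then contracts the strict transform of $L_{\FF_n}$, a determined $(-1)$-curve; hence two type $\mathrm{II}$ links with the same base-point again differ by an isomorphism of completions. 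Note also that the target $\FF_{n+1}$ or $\FF_{n-1}$ of a type $\mathrm{II}$ link is dictated by whether $p\in E_n$, so on a fixed source the base-point already determines the type.

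It remains to enumerate the pairs $(\psi,\varphi^{-1})$ sharing a common source and matching base-points. By Lemma~\ref{Lem:DescriptElLink}, the links issued from $\PP^2$ are only of type $\mathrm{I}$, those from $\FF_n$ with $n\ge 2$ only of type $\mathrm{II}$, and those from $\FF_1$ of type $\mathrm{II}$ or $\mathrm{III}$. If $X_1=\PP^2$ both maps are of type $\mathrm{I}$, and the composite is an isomorphism iff they blow up the same point, which is case $ii)$. If $X_1=\FF_1$ the admissible pairs are $(\mathrm{III},\mathrm{III})$, forcing $X_0=X_2=\PP^2$ and always yielding an isomorphism (case $i)$), and $(\mathrm{II},\mathrm{II})$ with equal base-point (case $ii)$); a mixed pair $(\mathrm{II},\mathrm{III})$ is excluded, since a type $\mathrm{III}$ link has no base-point while a type $\mathrm{II}$ one does, so they cannot differ by an isomorphism (equivalently $X_0\not\cong X_2$). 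If $X_1=\FF_n$ with $n\ge 2$ both maps are of type $\mathrm{II}$ and must share their base-point, again case $ii)$. Finally, the last assertion is immediate: a type $\mathrm{II}$ link $\FF_n\bir\FF_{n+1}$ has $p\in E_n$, and as $p$ also lies on $L_{\FF_n}$ it is forced to be the unique point $E_n\cap L_{\FF_n}$; thus two type $\mathrm{II}$ links $\FF_n\bir\FF_{n+1}$ automatically share this base-point and their composite is an isomorphism by case $ii)$. The only genuine obstacle is the bookkeeping in the type $\mathrm{II}$ analysis—checking that the comparison isomorphism respects the boundary and that the base-point pins down the target $\FF_{n\pm 1}$; once the determining claim is established the case enumeration is routine.
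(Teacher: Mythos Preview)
Your proof is correct and follows essentially the same approach as the paper: both arguments split into the case where neither link has a base-point (type $\mathrm{III}$, giving case $i)$), the case where the base-points disagree (so the composite has a base-point and is not an isomorphism), and the case where both links share a base-point (where the explicit description of the links forces the composite to be an isomorphism). Your packaging via the ``determining claim'' that an elementary link is fixed up to post-composition by its type and base-point is a clean way to organize this, but it is exactly the content of the paper's case-by-case analysis; the final ``moreover'' is also handled identically, by noting that the base-point of any type $\mathrm{II}$ link $\FF_n\dashrightarrow\FF_{n+1}$ is forced to be $E_n\cap L_{\FF_n}$.
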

\begin{proof}
If neither $\psi$ nor $\varphi^{-1}$ has a base-point, we have $X_1=\FF_1$ and $X_0=X_2=\p^2$, and both  $\psi$ and $\varphi^{-1}$ are contractions of the $(-1)$-curve of $\FF_1$. This implies that {$\psi \circ \varphi$} 
is an automorphism of $\p^2$.

If $\psi$ has a base-point, which is not a base-point of $\varphi^{-1}$, then 
{$\psi \circ\varphi$} has a base-point, and is thus not an isomorphism. 
The same holds exchanging the role of $\psi$ and $\varphi$.

The last case is when both $\psi$ and $\varphi^{-1}$ are links of type $\mathrm{I}$ or $\mathrm{II}$ {and have} the same base-point. The description of the links (Lemma~\ref{Lem:DescriptElLink}) implies that {$\psi \circ \varphi$} is an isomorphism. 
{It remains to see that this is always the case when $X_1=\FF_{n}$ and $X_0=X_2=\FF_{n+1}$. Indeed, this is true as the base-point of every elementary link from $(\FF_{n},E_n\cup L_{\mathbb{F}_n})$ to $(\FF_{n+1},E_{n+1}\cup L_{\mathbb{F}_{n+1}})$ is the intersection point of $E_n$ and $L_{\FF_n}$.}
\end{proof}

\begin{defn}
	A decomposition of a birational map $\varphi \colon (X, B) \bir (X', B')$ 
	into elementary links is called $\emph{reduced}$,
	if the composition of two consecutive elementary links is never an automorphism.
\end{defn}

\begin{prop}
	\lab{PROP:ReducedDecomposition}
	Let $\varphi \colon (X, B) \bir (X', B')$ be a birational map between
	two natural completions of $\AA^2$, and let 
	$\varphi = \varphi_m \cdots \varphi_1$ be a reduced decomposition
	of $\varphi$ into elementary links, with $m\ge 1$. Then, the following hold:
	\begin{enumerate}[$i)$]
	\item The number $l$ of base-points of $\varphi$ is equal to 
	the number of elementary links 
	$\varphi_i$ of type $\mathrm{I}$ or $\mathrm{II}$. If $l\ge 1$, then $\varphi$ has a unique
	proper base-point, which is equal to the base-point of $\varphi_1$ if $\varphi_1$ is of type $\mathrm{I}$ or $\mathrm{II}$, or to the preimage by $\varphi_1$ of the base-point of $\varphi_2$ if $\varphi_1$ is of type $\mathrm{III}$.
	\item If $\varphi=\psi_k \cdots \psi_1$ is 
	another decomposition into elementary links of $\varphi$, then 
	$k\ge m$.
	Moreover, if $k=m$ the decomposition is unique, up to isomorphisms of completions.
	\end{enumerate}
	
\end{prop}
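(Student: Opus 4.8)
The plan is to establish $(i)$ by induction on $m$ and then to deduce all of $(ii)$ from the uniqueness of reduced decompositions, proved by the same induction. For $(i)$, I would strip off the first link and write $\varphi=\rho\circ\varphi_1$ with $\rho=\varphi_m\cdots\varphi_2\colon(X_1,B_1)\bir(X',B')$; this $\rho$ is again reduced of length $m-1$, so by induction its base-points form a single tower over one proper point $p_\rho$, equal to the base-point of $\varphi_2$ when $\varphi_2$ is of type $\mathrm{I}$ or $\mathrm{II}$. Since a point of $X_0$ is a base-point of $\varphi$ precisely when $\varphi$ is not a local isomorphism there, the task is to follow how the at most one base-point of $\varphi_1$ interacts with the tower of $\rho$.

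If $\varphi_1$ is of type $\mathrm{III}$, hence a morphism contracting $E_1\subset X_0=\FF_1$ onto a point $q\in L_{\p^2}$, then $\varphi_1$ is an isomorphism away from $E_1$, so the base-points of $\varphi$ are the $\varphi_1^{-1}$-images of those of $\rho$ and no base-point is gained unless $q$ is itself a base-point of $\rho$. The reduced hypothesis excludes this: $X_1=\p^2$ forces $\varphi_2$ to be of type $\mathrm{I}$, its base-point is $p_\rho$, and by Lemma~\ref{Lem:SmallReduction} it cannot equal $q$, for otherwise $\varphi_2\circ\varphi_1$ would be an isomorphism; so $\varphi_1^{-1}$ carries $p_\rho$ to the unique proper base-point of $\varphi$, namely $\varphi_1^{-1}$ of the base-point of $\varphi_2$. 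If instead $\varphi_1$ is of type $\mathrm{I}$ or $\mathrm{II}$ with base-point $q_1$, it blows $q_1$ up to a boundary curve $C$ of $X_1$ (the curve $E_1$ in type $\mathrm{I}$, the new fibre $L_{\FF_m}$ in type $\mathrm{II}$). The decisive point is that $p_\rho$ always lies on $C$: for type $\mathrm{II}$ this holds because every admissible first link of $\rho$ from $X_1=\FF_m$ has its proper base-point on the fibre $L_{\FF_m}$ (using reducedness when that first link is of type $\mathrm{III}$); for type $\mathrm{I}$ the reduced hypothesis is precisely what forces $\varphi_2$ to be of type $\mathrm{II}$ (Lemma~\ref{Lem:SmallReduction} forbids ``type $\mathrm{I}$ followed by type $\mathrm{III}$''), and then its base-point is $E_1\cap L_{\FF_1}\in E_1$. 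Since $p_\rho\in C$, the map $\varphi_1^{-1}$ sends the tower of $\rho$ into the first infinitesimal neighbourhood of $q_1$, so $q_1$ is not resolved but becomes the unique proper base-point of $\varphi$, and the base-points again form a single tower. In both cases the number of base-points grows by one exactly when $\varphi_1$ is of type $\mathrm{I}$ or $\mathrm{II}$, which yields the counting statement and the description of the proper base-point. The crux, and the main obstacle, is exactly the assertion $p_\rho\in C$: one must check, case by case on the type of $\varphi_1$ and on $X_1$, that reducedness of the single pair $\varphi_2\varphi_1$ already prevents the curve created by $\varphi_1$ from being contracted by $\rho$ without first acquiring a base-point on it.

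For $(ii)$ I would reduce everything to the uniqueness of the reduced decomposition. Granting it, an arbitrary decomposition $\varphi=\psi_k\cdots\psi_1$ is shortened by repeatedly deleting a consecutive pair that composes to an isomorphism $\theta$ and absorbing $\theta$ into a neighbour (a link composed with an isomorphism of completions is again an elementary link); each move lowers the length by $2$ and the process terminates at a reduced decomposition, which by uniqueness is our $\varphi_m\cdots\varphi_1$. Hence $k=m+2r\ge m$, with equality forcing $r=0$, so $\psi_k\cdots\psi_1$ was already reduced and coincides with $\varphi_m\cdots\varphi_1$ up to isomorphism. To prove uniqueness I induct on $m$ and show that $\varphi_1$ is determined by $\varphi$ alone. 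If $X_0=\p^2$ the first link is of type $\mathrm{I}$ and, by $(i)$, blows up the intrinsic proper base-point of $\varphi$; if $X_0=\FF_n$ with $n\ge 2$ it must be of type $\mathrm{II}$ with base-point the proper base-point of $\varphi$. The delicate case is $X_0=\FF_1$, where $\varphi_1$ may be of type $\mathrm{II}$ or $\mathrm{III}$: if $\varphi$ is a morphism then $\varphi_1$ contracts $E_1$, and otherwise $(i)$ shows that the proper base-point of $\varphi$ equals $E_1\cap L_{\FF_1}$ (hence lies on $E_1$) exactly when $\varphi_1$ is of type $\mathrm{II}$, while it lies on $L_{\FF_1}\setminus E_1$ when $\varphi_1$ is of type $\mathrm{III}$. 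In every case $\varphi_1$ is forced, so two reduced decompositions of $\varphi$ share their first link; composing with $\varphi_1^{-1}$ produces a shorter birational map carrying two reduced decompositions, and the induction hypothesis concludes, giving simultaneously the length equality and the uniqueness.
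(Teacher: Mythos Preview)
Your approach is essentially the paper's: both prove $(i)$ by induction on $m$, peeling off $\varphi_1$ and analysing how the unique proper base-point $p_\rho$ of $\rho=\varphi_m\cdots\varphi_2$ sits relative to the curve that $\varphi_1$ creates or contracts, and both derive $(ii)$ by showing that the proper base-point of $\varphi$ pins down $\varphi_1$ up to an isomorphism of completions, then inducting. Your case analysis for $X_0=\FF_1$ in $(ii)$ is in fact more explicit than the paper's one-line ``the unique base-point determines the first link''.

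One point you should make explicit: when $\varphi_1$ is of type $\mathrm{II}$, knowing $p_\rho\in C=L_{\FF_m}$ is not quite enough for your claim that $\varphi_1^{-1}$ carries the tower of $\rho$ into the infinitesimal neighbourhood of $q_1$. You also need $p_\rho$ to differ from the base-point $q'$ of $\varphi_1^{-1}$; otherwise the tower over $p_\rho=q'$ lands on the \emph{proper} curve $L_{\FF_n}\subset X_0$ rather than infinitely near $q_1$, and the count would break. The paper checks this explicitly (``not a base-point of $(\varphi_1)^{-1}$''). It does follow from what you have: if $\varphi_2$ is of type $\mathrm{II}$ with base-point $p_\rho=q'$, then $\varphi_2\varphi_1$ is an isomorphism by Lemma~\ref{Lem:SmallReduction}, contradicting reducedness; if $\varphi_2$ is of type $\mathrm{III}$ (so $m=1$, $n=2$), your own argument gives $p_\rho\in L_{\FF_1}\setminus E_1$ while $q'=E_1\cap L_{\FF_1}$. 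So the fix is immediate, but it is the actual content of the inductive step and deserves to be stated.
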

\begin{proof}
We prove {$i)$} by induction on $m$, the case $m=1$ being obvious. 
Let $m \ge 2$.

Suppose that $\varphi_1$ is a link of type $\mathrm{III}$. It contracts $E_1\subset X$ onto a point which is not a base-point of $\varphi_2$, and thus not of $\varphi_m\cdots\varphi_2$, by induction hypothesis. In consequence, the number of base-points of $\varphi$ is equal to the one of $\varphi_m\cdots\varphi_2$, and $\varphi$ has a unique proper base point. This latter is the preimage by $\varphi_1$ of the {proper base-point} of $\varphi_2$.

Suppose now that $\varphi_2$ is a link of type $\mathrm{III}$, which implies that $\varphi_1$ is a link $\FF_2\dasharrow \FF_1$ of type $\mathrm{II}$, with one base-point $q$. If $m=2$, the result is clear. If $m\ge 3$, we use {the} induction hypothesis and see that the unique proper base-point of $\varphi_m\cdots\varphi_2$ is a point of $L_{\FF_1}\setminus E_1$, which is not a base-point of $(\varphi_1)^{-1}$. This implies that $\varphi$ has one more base-point as 
$\varphi_m\cdots\varphi_2$. Moreover, since the
proper base point of $\varphi_m\cdots \varphi_2$ is on $L_{\FF_1}$ 
and since $L_{\FF_1}$ is contracted by $(\varphi_1)^{-1}$ onto $q$, this implies that $q$ is the only proper base-point of $\varphi$.

To prove {$i)$}, it remains to study the case where $\varphi_1$ and $\varphi_2$ are links of type $\mathrm{I}$ or~$\mathrm{II}$. As in the previous case, $\varphi_m\cdots\varphi_2$ has a unique proper base-point (here equal to the one of $\varphi_2$), which is not a base-point of $(\varphi_1)^{-1}$ but belongs to the curve contracted by $(\varphi_1)^{-1}$ onto the base-point of $\varphi_1$. This again implies that $\varphi$ has one more base-point as $\varphi_m\cdots\varphi_2$ and that its unique proper base-point is the one of~$\varphi_1$. 

Now that {$i)$} is proved, let us show that it implies {$ii)$}. Suppose the existence of another decomposition, that we can assume to be reduced, of length $k\le m$: $\varphi=\varphi_m\cdots \varphi_1=\psi_k\cdots \psi_1$. If $\varphi$ has no base-point, both {decompositions} consist of one link of type $\mathrm{III}$ and the result is clear. Otherwise, the unique base-point of $\varphi$ determines the first link, so we have $\psi_1=\alpha\varphi_1$ for some isomorphism of {natural completions} 
$\alpha$. Proceeding by induction, we get $k=m$ and the unicity of the decomposition.
\end{proof}

Proposition~\ref{PROP:ReducedDecomposition} implies that the ``length" of a reduced decomposition of $\varphi$ only depends on $\varphi$. The following definition is thus natural.
\begin{defn}
	The number of elementary links in a reduced decomposition of $\varphi$
	is called the \emph{length of $\varphi$} and we denote it by $\len(\varphi)$.  
\end{defn}
\section{Birational maps preserving a curve and the proof of Theorem~\ref{Thm:Main}}
\label{Sec:ProofThm}
This section is devoted to the proof of Theorem~\ref{Thm:Main}, which is done using elementary links and by looking at the singularities of the curve obtained in the natural completions.
\begin{defn}
	If $\Gamma \subseteq X$ is a (closed) curve with no component in $B$
	and $\varphi \colon (X, B) \bir (X, B)$ is a birational map, 
	then we denote by $\varphi(\Gamma)$ the closure of  
	$\varphi(\Gamma \cap X \setminus B)$ in $X$ 
	and call it the \emph{image of $\Gamma$ under $\varphi$}.
	Moreover, we denote by $\Bir((X, B), \Gamma)$ the group of birational maps 
	$\varphi \colon (X, B) \bir (X, B)$ such that $\varphi(\Gamma) = \Gamma$.
\end{defn}
\begin{rem}
If $\Gamma\subset X$ is a curve having no component in $ B$, then $\Bir((X,B),\Gamma)=\{g\in \Aut(X\setminus B)\ |\ g (\Gamma\setminus B)=(\Gamma\setminus B)\}$. In particular, $\Bir((X,B),\Gamma)$ corresponds to the group $\Aut(\A^2,\Gamma\cap \A^2)$ of automorphisms of $X\setminus B=\A^2$ that preserve the closed curve $\Gamma\cap \A^2$ of $\A^2$.
\end{rem}

To study maps preserving a curve, we will study the singularities of the {curve} on the boundary.

\begin{defn}
	Let $\Gamma$ be a curve and let $p \in \Gamma$ be  a 
	$\KK$-point. We define the 
	\emph{height $\h_\Gamma(p)$ of $\Gamma$ at $p$} inductively, as follows:
	\begin{enumerate}[$i)$]
		\item If $\Gamma$ is smooth in $p$, then we define 
		$\h_\Gamma(p) := 0$.
		\item Otherwise, let $\pi \colon \tilde{\Gamma} \to \Gamma$ 
		be the blow-up of $\Gamma$ 
		in $p$ and let $p_1, \ldots, p_n$ be the  
		points of $\tilde{\Gamma}$ with $\pi(p_i) = p$. We define
		$\h_\Gamma(p) := \max_i \{ \, \h_{\tilde{\Gamma}}(p_i) + 1 \, \}$.
	\end{enumerate}
\end{defn}

Recall that for any curves $\Gamma_1,\Gamma_2$ having no common component on a smooth projective surface, the intersection number $\Gamma_1\cdot \Gamma_2$ is non-negative, and corresponds to the sum of the intersection numbers $(\Gamma_1\cdot \Gamma_2)_p$, where $p$ runs over all $\KK$-points of $\Gamma_1\cap \Gamma_2$. The intersection number $(\Gamma_1\cdot \Gamma_2)_p$ satisfies $(\Gamma_1\cdot \Gamma_2)_p\ge m_p(\Gamma_1)\cdot m_p(\Gamma_2)$, where $m_p(\Gamma_i)$ is the multiplicity of $\Gamma_i$ at $p$, and equality holds if and only if the tangent cones of $\Gamma_1$ and $\Gamma_2$ at $p$ are distinct.\\

The next proposition is the key ingredient in the proof of Theorem~\ref{Thm:Main}.

\begin{prop}
	\lab{PROP:Key}
	Let $\varphi \colon (X, B) \bir (X', B')$ be a birational map 
	between natural completions of $\AA^2$, admitting a reduced
	decomposition $\varphi=\varphi_m\cdots\varphi_1$ into 
	elementary links, with $m \ge 1$.
	Let $\Gamma\subset X$  be a curve having no component in  $B$ {and}
	let $\Gamma'=\varphi(\Gamma)\subset X'$ be its image under $\varphi$. 
	Suppose that one of the following holds:
	\begin{enumerate}[$a)$]
	\item The link $\varphi_1$ is of type $\mathrm{III}$ 
	$($from $\mathbb{F}_1$ to $\p^2)$
	and $(E_1 \cdot \Gamma)_{p_0} > 1$, where $p_0$ is defined by
	{$\{p_0\} =E_1\cap L_{\FF_1} =: A$}.
	
	\item The link $\varphi_1$ is of type $\mathrm{I}$ or $\mathrm{II}$, and 
	the finite set $A\subset L_X$ of $\KK$-points lying on $\Gamma$ 
	which are not 
	$($proper$)$ base-points of $\varphi$ satisfies
	$\sum_{p\in A} (L_X \cdot \Gamma)_p>1$.
	\end{enumerate}	
	Then, there exists a $\KK$-point {$q\in L_{X'}$} such that
	\[
		\h_{\Gamma'}(q)\ge l+\max_{p\in A} \h_{\Gamma}(p) \, ,
	\]
	where $l$ is the number of base-points 
	of $\varphi^{-1}$. If $l\ge 1$, then $q$ can be
	chosen as the unique proper base-point of $\varphi^{-1}$, which is a $\K$-point.
\end{prop}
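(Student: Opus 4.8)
The plan is to prove the statement by induction on the length $m$, after isolating the single-link mechanism responsible for the height jump. The key observation is a \emph{contraction bump}: if $\pi\colon Y\to Z$ is the contraction of a $(-1)$-curve $C$ to a point $\bar q$ and $D\subset Y$ is a curve with no component equal to $C$ and with $C\cdot D\ge 2$, then $\pi^{-1}$ is the blow-up of $\bar q$ with exceptional divisor $C$ and $D$ is the strict transform of $\pi(D)$. Hence $m_{\bar q}(\pi(D))=C\cdot D\ge 2$, so $\pi(D)$ is singular at $\bar q$, and, since $D\cap C$ is exactly the set of points of $D$ over $\bar q$, the height recursion gives
\[
	\h_{\pi(D)}(\bar q)=\max_{r\in D\cap C}\{\h_D(r)+1\}\ge 1+\max_{r\in D\cap C}\h_D(r).
\]
This produces the ``$+1$'' contributed by each contracted curve.

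The engine of the proof is the analysis of a single link. Let $X_1$ be the target of $\varphi_1$, put $\Gamma_1:=\varphi_1(\Gamma)$, and let $c_1\in\{0,1\}$ be the number of base-points of $\varphi_1^{-1}$; I would produce a point $q_1\in L_{X_1}$ with $\h_{\Gamma_1}(q_1)\ge c_1+\max_{p\in A}\h_\Gamma(p)$, equal to the base-point of $\varphi_1^{-1}$ when $c_1=1$. If $\varphi_1$ is of type $\mathrm{III}$ (case $a$), it contracts $E_1$ onto a $\k$-point $p_0'\in L_{\p^2}$, the base-point of $\varphi_1^{-1}$; the bump with $C=E_1$, $D=\Gamma$ and the single point $p_0$ gives $\h_{\Gamma_1}(p_0')\ge 1+\h_\Gamma(p_0)$. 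If $\varphi_1$ is of type $\mathrm{II}$, it blows up its base-point $p\notin A$ and contracts the strict transform $C$ of $L_X$; writing $\tilde\Gamma$ for the strict transform of $\Gamma$, one computes $C\cdot\tilde\Gamma=L_X\cdot\Gamma-m_p(\Gamma)\ge\sum_{p'\in A}(L_X\cdot\Gamma)_{p'}>1$ (using $(L_X\cdot\Gamma)_p\ge m_p(\Gamma)$), so the strict transforms of the points of $A$ lie on $C\cap\tilde\Gamma$ and the bump yields $\h_{\Gamma_1}(q_1)\ge 1+\max_{p'\in A}\h_\Gamma(p')$ with $q_1$ the base-point of $\varphi_1^{-1}$. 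If $\varphi_1$ is of type $\mathrm{I}$ (so $c_1=0$), nothing is contracted and I take $q_1$ to be the image of a point of $A$ realising $\max_{p'\in A}\h_\Gamma(p')$, its height being unchanged on $L_{\FF_1}$. This already settles $m=1$, where $X_1=X'$, $\Gamma_1=\Gamma'$ and $l=c_1$.

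For the inductive step ($m\ge 2$) I would write $\varphi=\psi\circ\varphi_1$ with $\psi=\varphi_m\cdots\varphi_2$ reduced of length $m-1$, and check that $q_1$ feeds the hypothesis of the Proposition for $\psi$ and $\Gamma_1$. If $\varphi_2$ is of type $\mathrm{I}$ or $\mathrm{II}$, I let $A_1$ be the points of $\Gamma_1\cap L_{X_1}$ that are not base-points of $\psi$ and verify $b)$: when $c_1=1$ the point $q_1\in A_1$ already has $m_{q_1}(\Gamma_1)\ge 2$, hence $(L_{X_1}\cdot\Gamma_1)_{q_1}>1$; when $c_1=0$ the blow-up preserves the whole excess, $\sum_{q\in A_1}(L_{X_1}\cdot\Gamma_1)_q\ge\sum_{p\in A}(L_X\cdot\Gamma)_p>1$; in both cases $\max_{q\in A_1}\h_{\Gamma_1}(q)\ge c_1+\max_{p\in A}\h_\Gamma(p)$. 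That these points avoid the base-point of $\psi$ comes from reducedness: Lemma~\ref{Lem:SmallReduction} forbids $\varphi_2$ and $\varphi_1^{-1}$ of the same type $\mathrm{I}$ or $\mathrm{II}$ to share a base-point, and a type $\mathrm{II}$ link out of $\FF_1$ is forced to blow up $E_1\cap L_{\FF_1}$, which is off the images of $A$. If $\varphi_2$ is of type $\mathrm{III}$, reducedness excludes $\varphi_1$ of type $\mathrm{I}$, forcing $\varphi_1$ of type $\mathrm{II}$, $X_1=\FF_1$ and $c_1=1$; I would then check $q_1=E_1\cap L_{\FF_1}$, so that $(E_1\cdot\Gamma_1)_{q_1}\ge m_{q_1}(\Gamma_1)\ge 2>1$ gives hypothesis $a)$.

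Applying the induction hypothesis to $\psi$ and $\Gamma_1$ produces $q\in L_{X'}$ with $\h_{\Gamma'}(q)\ge l'+\max_{A_1}\h_{\Gamma_1}\ge l'+c_1+\max_{p\in A}\h_\Gamma(p)$ (in the type $\mathrm{III}$ case $\h_{\Gamma'}(q)\ge l'+\h_{\Gamma_1}(q_1)$ gives the same bound), where $l'$ is the number of base-points of $\psi^{-1}$. Since the inverse of a link of type $\mathrm{I}$ (resp.\ $\mathrm{II}$, $\mathrm{III}$) is of type $\mathrm{III}$ (resp.\ $\mathrm{II}$, $\mathrm{I}$), Proposition~\ref{PROP:ReducedDecomposition} gives $l=l'+c_1$, which is the claimed bound. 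When $l\ge 1$, the identification of $q$ with the unique proper base-point of $\varphi^{-1}$ is carried through the induction: by Proposition~\ref{PROP:ReducedDecomposition} this point is determined by the leading links $\varphi_m^{-1},\varphi_{m-1}^{-1},\dots$ shared by $\varphi^{-1}$ and $\psi^{-1}$, hence it coincides with the proper base-point supplied by the induction. I expect the main obstacle to be exactly this bookkeeping, together with the type $\mathrm{II}$/type $\mathrm{III}$ distinction: one must keep track of which boundary curve ($L_X$ or $E_1$) carries the excess intersection, and verify link by link, using only reducedness, that the constructed point is never a base-point of the remaining map.
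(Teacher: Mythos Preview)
Your proposal is correct and follows essentially the same approach as the paper: induction on the length $m$, a case split on the type of $\varphi_1$, the ``contraction bump'' giving the $+1$ in height whenever a $(-1)$-curve with intersection $\ge 2$ is contracted, and the verification via reducedness that the produced point $q_1$ is never the next base-point and therefore feeds the induction hypothesis (with the special check $q_1=E_1\cap L_{\FF_1}$ when $\varphi_2$ is of type~$\mathrm{III}$). Your modular presentation, isolating the single-link analysis and using $l=l'+c_1$ explicitly, is a clean repackaging of exactly the paper's argument.

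One small bookkeeping point to tighten: your final sentence identifies $q$ with the proper base-point of $\varphi^{-1}$ by saying it agrees with ``the proper base-point supplied by the induction'', but this only applies when $l'\ge 1$. If $l'=0$ (which forces $m=2$, $\varphi_1$ of type~$\mathrm{III}$, $\varphi_2$ of type~$\mathrm{I}$, $l=1$), the induction on $\psi$ gives no distinguished base-point. In that case the proper base-point of $\varphi^{-1}$ is $\varphi_2(q_1)$ by Proposition~\ref{PROP:ReducedDecomposition}, and since $\varphi_2$ is an isomorphism near $q_1$ one has $\h_{\Gamma'}(\varphi_2(q_1))=\h_{\Gamma_1}(q_1)\ge 1+\h_\Gamma(p_0)$ directly; the paper handles this implicitly by carrying the concrete point through each step rather than invoking the induction black-box.
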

%
%\begin{rem}
%\texttt{a modifier}
%	The number $\varhash \{ \, i \ | \ \varphi_i \neq (\PP^2 \bir \FF^1) \, \}$
%	is exactly the number of contractions in the decomposition
%	$\varphi = \varphi_m \cdots \varphi_1$.
%\end{rem}

\begin{proof}
	We proceed by induction on the number $m$ of elementary links in the 
	reduced decomposition of $\varphi$. 
	We distinguish the following cases, depending on the nature of the first link $\varphi_1$.
	
	$i)$ If $\varphi_1$ is a link of type $\mathrm{III}$, 
	it contracts the curve $E_1 \subseteq \FF_1$
	onto a $\K$-point $q\in \p^2$. By assumption, we have 
	$(E_1 \cdot \Gamma)_{p_0} > 1$. 
	The multiplicity of $\Gamma_1=\varphi_1(\Gamma)$ at  $q$ is 
	equal to $E_1\cdot \Gamma>1$, 
	which implies that $\Gamma_1$ is singular at
	$q$ and that $\h_{\Gamma_1}(q)\ge \h_{\Gamma}(p_0)+1$.
	
	If $m = 1$, we have $l=1$ and 
	$\h_{\Gamma_1}(q)\ge \h_{\Gamma}(\tr{p_0})+1= 
	l+\max_{p\in A} \h_{\Gamma}(p)$
	 (because $A=\{p_0\}$), so we are done. So, assume $m > 1$. The point 
	$q$ is not a base point of $\varphi_2$, since otherwise
	$\varphi_2 \circ \varphi_1$ would be an automorphism of $\mathbb{F}_1$.
	Moreover, 
	$(L_{\PP^2} \cdot \Gamma_1)_q > 1$ because $\Gamma_1$ is 
	singular at $q$.
	The claim follows by applying the induction hypothesis to 
	$\varphi_m \cdots \varphi_2$, as $q$ is not a base-point of
	$\varphi_m \cdots \varphi_2$.
	
	$ii)$ Suppose that $\varphi_1$ is a link of type $\mathrm{II}$, 
	i.e.\;$\varphi_1 \colon \FF_{n'} \bir \FF_{n}$ where $n'=n \pm 1$. 
	By definition there
	exist blow-ups $\varepsilon \colon S \to \FF_{n'}$ and 
	$\eta \colon S \to \FF_{n}$ of  $\K$-points $q'\in L_{\FF_{n'}}$ and 
	$q\in L_{\FF_{n}}$ respectively, such that 
	$\varphi_1 = \eta \circ \varepsilon^{-1}$. Denote by $\tilde{\Gamma} \subseteq S$
	the strict transform of $\Gamma$ under $\varepsilon^{-1}$, and by $E_q\subset S$ 
	the irreducible curve contracted by $\eta$ onto $q$, which is the strict transform
	of $L_{\FF_{n'}}$ under $\varepsilon^{-1}$. 
	The map $\varepsilon^{-1}$ is an isomorphism in a neighbourhood
	of every point of $A\subset L_{\FF_{n'}}$.
	By hypothesis, one has $\sum_{p\in A} (L_{\FF_{n'}} \cdot \Gamma)_p>1$, which implies that 
	$\tilde{\Gamma}\cdot E_q>1$ on $S$. The curve $\Gamma_1=\eta(\tilde{\Gamma})\subset \FF_n$ is thus singular at $q$, 
	and the height of $\Gamma_1$ at $q$ 
	is at least equal to $1+\max_{p\in A} \h_{\Gamma}(p)$.
	If $m = 1$, then we are done. If $m > 1$, 
	$q$ is not a base point of $\varphi_2$, since otherwise 
	$\varphi_2 \circ \varphi_1$ would be an isomorphism. Moreover, $q$ is a singular point of $\Gamma_1$ which 
	belongs to $L_{\FF_n}$, and if $\varphi_2$ is of type $\mathrm{III}$, then $n=1$, $n'=2$, so $q$ 
	is the intersection point of $E_1$ and $L_{\FF_1}$. In any case, the point $q$ belongs to the set ``$A$" associated
	to $\varphi_m \cdots \varphi_2$, so the result follows by induction.
	
	$iii)$ The last case is when $\varphi_1$ is of type  $\mathrm{I}$, i.e.\;when
	it is a map $\varphi_1\colon \PP^2 \bir \FF_1$.
	 If $m = 1$, {then} $l=0$ and the result is obvious, by choosing $q$ as the image of a point of $A$ with a maximal height.
	  If $m>1$, the link $\varphi_2$ is a map $\FF_1\dasharrow \FF_2$ centered at the intersection point of 
	  $E_1$ and $L_{\FF_1}$. In particular, the map $\varphi_1$ induces an isomorphism at a neighbourhood of any point
	  of $A$, sending $L_{\p^2}$ onto $L_{\FF_1}$, and sends the set $A$ onto the set ``$A$" associated 
	  to $\varphi_m \cdots \varphi_2$. The result follows by induction hypothesis.
\end{proof}

The following corollary is a direct consequence of  Proposition~\ref{PROP:Key}.

\begin{cor}
	\lab{COR:Key}
	Let $\varphi \in \Bir((X, B), \Gamma)\setminus \Aut(X,B)$ and let 
	$\varphi = \varphi_m \cdots \varphi_1$ be its reduced decomposition
	into elementary links.
	\tr{We assume that $\varphi_1$ is of type $\mathrm{I}$ or $\mathrm{II}$, 	
	denote by $q$ the base-point of $\varphi_1$ $($which is the base-point of 
	$\varphi$$)$ and denote by $A\subset L_X$ the finite set of 
	$\KK$-points lying on $\Gamma$ which are not 
	$($proper$)$ base-points of $\varphi$. Then one of the following holds:}

	\tr{$1)$ The set $A$ \tr{is empty or} it consists of one point $p$ 
	which satisfies 
	$(L_X \cdot \Gamma)_p = 1$.}
	
	\tr{$2)$ The set $A$ is non-empty and 
	the height of $\Gamma$ at $q$ is bigger than
	the height of $\Gamma$ at every  point of $A$
	$($and thus $\Gamma$ is singular at $q$$)$.}
\end{cor}

\begin{prop}
	\lab{PROP:Main1}
	Let $(X, B)$ be a natural completion of $\AA^2$ and let $\Gamma$ be a curve in $X$ having no component in $B$. Then, 
	there exists a natural completion $(X', B')$
	of $\AA^2$ and a birational map $\varphi \colon (X, B) \bir (X', B')$ 
	such that one of the following holds:
	
	$1)$ $\Bir((X',B'),\varphi(\Gamma)) \subseteq \Aut(X', B')$;
	
	$2)$  The curve $\varphi(\Gamma)$ intersects transversally $L_{X'}$, i.e.\;$(\varphi(\Gamma)\cdot L_{X'})_p\le 1$ for any $p\in X'(\KK)$.
\end{prop}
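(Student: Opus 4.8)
The plan is to argue by descent over the natural completions that are reachable from $(X,B)$ (those $(Y,B_Y)$ admitting a birational map $(X,B)\bir (Y,B_Y)$, equivalently those obtained by applying an automorphism of $\A^2$), decreasing a complexity that records how the image $\Gamma_Y$ of $\Gamma$ meets the boundary line $L_Y$. A natural candidate is the lexicographically ordered pair
\[
I(Y):=\bigl(\,\max_p \h_{\Gamma_Y}(p),\ \max_p (\Gamma_Y\cdot L_Y)_p\,\bigr),
\]
where $p$ runs over the $\kk$-points of $\Gamma_Y\cap L_Y$ (and $I(Y):=(0,0)$ if this set is empty). Both entries are non-negative integers, so $I$ takes values in a well-ordered set and we may choose a reachable $(X',B')$, with curve $\Gamma':=\varphi(\Gamma)$, for which $I$ is minimal. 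For this minimal completion I would first clear the two easy outcomes: if the second coordinate of $I(X')$ is at most $1$ then $(\Gamma'\cdot L_{X'})_p\le 1$ for every $\kk$-point $p$ and we are in case $2)$; if $\Bir((X',B'),\Gamma')\subseteq\Aut(X',B')$ we are in case $1)$. It therefore remains to exclude the possibility that $\Gamma'$ is non-transverse to $L_{X'}$ while some $g\in\Bir((X',B'),\Gamma')\setminus\Aut(X',B')$ exists; this I would rule out by contradicting the minimality of $I(X')$.

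The mechanism is Corollary~\ref{COR:Key}. Since $\Gamma'$ is non-transverse, there is a $\kk$-point with $(L_{X'}\cdot\Gamma')_p>1$, and (after a preliminary normalization ensuring the type~$\mathrm{III}$ hypothesis of the corollary, namely that a tangency forcing a first link of type~$\mathrm{III}$ sits at $E_1\cap L_{\FF_1}$) the corollary applies to $g$ and singles out a $\k$-point $q\in L_{X'}$: it is the unique point of $\Gamma'\cap L_{X'}$ of maximal height, it is non-transverse, and it is a singular point of $\Gamma'$ unless $\Gamma'\cap L_{X'}=\{q\}$. Having located $q$, I would then perform the reduction myself, independently of the shape of $g$: by Lemma~\ref{Lem:DescriptElLink} the $\k$-point $q$ of $L_{X'}$ is the base-point of an elementary link $\lambda$ centered at $q$ (of type $\mathrm{I}$ or $\mathrm{II}$, possibly after routing through $\p^2$ by a type~$\mathrm{III}$ contraction when $X'=\FF_1$ and $q\notin E_1$), and I would pass to the reachable completion carrying $\lambda(\Gamma')$. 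Blowing up the worst point $q$ lowers by one the heights of the points of the strict transform lying over $q$, while Corollary~\ref{COR:Key} guarantees that every other point of $\Gamma'\cap L_{X'}$ already had strictly smaller height; when the maximal height is $0$ the same blow-up instead drops the tangency order of $\Gamma'$ with the boundary line. In either case $I$ strictly decreases, contradicting minimality, so the excluded situation does not occur and we always land in case $1)$ or case $2)$.

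The main obstacle is precisely the verification that $I$ strictly decreases under $\lambda$. One must track heights and local intersection numbers not only through the blow-up of $q$ but through the contraction hidden in a link of type~$\mathrm{II}$ (and in the auxiliary type~$\mathrm{III}$ step), where gluing of branches can raise intersection numbers and, a priori, create new singular points on the boundary that could spoil the first coordinate of $I$; one must also control the images of the remaining points of $\Gamma'\cap L_{X'}$, and justify the normalization that puts the relevant tangency at $E_1\cap L_{\FF_1}$ so that Corollary~\ref{COR:Key} is legitimately applicable. Getting the exact form of the invariant so that all of these moves are monotone — in particular reconciling the height drop with the tangency drop across the $\p^2$--$\FF_n$ transitions — is where the real work lies; the lexicographic pair above is the natural first guess but may need refinement to make every elementary step strictly decreasing.
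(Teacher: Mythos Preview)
Your overall plan---minimize a curve-boundary complexity over all reachable completions and derive a contradiction from Corollary~\ref{COR:Key}---is different from the paper's, and the obstacle you flag is genuine and, as far as I can see, fatal for the invariant $I$ (and for any lexicographic refinement built only from local data of $\Gamma'\cap L_{X'}$). The failure is precisely the contraction hidden in a type~$\mathrm{II}$ link. Suppose $X'=\FF_n$ with $q=E_n\cap L_{\FF_n}$ an ordinary node of $\Gamma'$ whose two branches are transverse to $L_{\FF_n}$ (so $\h_{\Gamma'}(q)=1$ and $(\Gamma'\cdot L_{\FF_n})_q=2$), and suppose $\Gamma'$ meets $L_{\FF_n}$ at one further smooth point $p_1$ with $(\Gamma'\cdot L_{\FF_n})_{p_1}=3$; then $I(\FF_n)=(1,3)$, and nothing in Corollary~\ref{COR:Key} forbids $q$ from being the proper base-point of some $g$. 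Your link $\lambda\colon\FF_n\dasharrow\FF_{n+1}$ centered at $q$ blows up $q$ (the branches separate, height $0$ each), but then contracts the strict transform of $L_{\FF_n}$ to $c=E_{n+1}\cap L_{\FF_{n+1}}$; there $\lambda(\Gamma')$ acquires multiplicity $\tilde\Gamma'\cdot\tilde L_{\FF_n}=3$, with $\h_{\lambda(\Gamma')}(c)=1+\h_{\Gamma'}(p_1)=1$ and $(\lambda(\Gamma')\cdot L_{\FF_{n+1}})_c=3$. Hence $I(\FF_{n+1})=(1,3)=I(\FF_n)$: no strict decrease. The same mechanism (points of $\Gamma'\cap L_{X'}$ away from $q$ being glued at $c$) obstructs any invariant that sees only the multiset of pairs $(\h_p,(\Gamma'\cdot L)_p)$. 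Your ``preliminary normalization'' for the type~$\mathrm{III}$ hypothesis of Corollary~\ref{COR:Key} has the same problem: the contraction $\FF_1\to\PP^2$ can raise $I$.

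The paper avoids this by not minimizing any curve complexity. It constructs a sequence of links $\varphi_{i+1}$ \emph{dictated by the group}: either every $g\in G_i\setminus\Aut(X_i,B_i)$ begins with the type~$\mathrm{III}$ contraction $\tau_0$ and one takes $\varphi_{i+1}=\tau_0$, or some $g$ begins with a type~$\mathrm{I}$ or~$\mathrm{II}$ link and one takes $\varphi_{i+1}$ to be that link. The key monotone quantity is $\len(g)$: conjugation by $\varphi_{i+1}$ is shown (using Corollary~\ref{COR:Key} only to pin down the common proper base-point) to strictly shorten every $g$ of positive length. Termination of the sequence is argued separately, by tracking $(\Gamma_i\cdot L_{X_i})_{p_i}$ at the \emph{specific} base-point $p_i$ of $\varphi_{i+2}\varphi_{i+1}$ along the constructed sequence---not a global maximum over $L_{X_i}$---and showing it is non-increasing with only finitely many indices at each value $>1$. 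The decreasing potential thus lives on the group, not on the curve; your proposal lacks a substitute for this and I do not see how to supply one without importing the length-reduction idea.
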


\begin{proof}
	{Define} $(X_0,B_0)=(X,B)$ and $\varphi_0 = \id \in \Aut(X_0, B_0)$.
	We construct inductively a sequence of elementary links
	\[
		\label{EQ:Sequence}
		\tag{$\ast$}
		(X_0, B_0) \stackrel{\varphi_1}{\bir} 
		(X_1, B_1) \stackrel{\varphi_2}{\bir}
		(X_2, B_2) \stackrel{\varphi_3}{\bir} \ldots \, .
	\]
	Let $i \geq 0$ and assume
	that $\varphi_0, \ldots, \varphi_i$ are already constructed. We 
	write $\Gamma_i=\varphi_i\cdots\varphi_1(\Gamma)$, $G_i=(\Bir(X_i,B_i),\Gamma_i)$ 
	and define  $\varphi_{i+1}$ in the following way:
	
	\begin{enumerate}[$i)$]
	\item If  $G_i \subseteq \Aut(X_i, B_i)$ or 
	$\Gamma_i$ intersects transversally $L_{X_i}$, we define 
	$(X_{i+1}, B_{i+1}) = (X_i, B_i)$ and $\varphi_{i+1}= \id$.
	
	\item If $i)$ {does not} hold
	and the reduced decomposition into elementary links
	of every $g \in G_i \setminus \Aut(X_i, B_i)$ starts with an elementary link 
	$\tau_0\colon\FF_1 \to \PP^2$ of type III, then we define 
	 $\varphi_{i+1}=\tau_0$, and thus 
	 set $(X_{i+1},B_{i+1})=(\p^2,L_{\p^2})$.
	
	\item If  $i)$ and $ii)$ do not occur, there exists  
	$g \in G_i \setminus \Aut(X_i, B_i)$ such that
	the reduced decomposition into elementary links of $g$ 
	starts with an elementary link $\tau_1$, not of type III.
	In this case we define $\varphi_{i+1}=\tau_1$, and $(X_{i+1},B_{i+1})$ 
	as the target of $\tau_1$.
	\end{enumerate}
	
	Now, we claim that the sequence \eqref{EQ:Sequence} satisfies the following
	two properties.
	\begin{enumerate}[$a)$]
	\item If $\varphi_{i+1}$ is not an automorphism, then we have 
	for all $g \in G_{i}$ 
	\[
		\len(\varphi_{i+1} g \varphi_{i+1}^{-1}) < \len(g) 
		\quad \textrm{or} \quad
		\len(\varphi_{i+1} g \varphi_{i+1}^{-1}) = \len(g) = 0 \, .
		\]
	
	\item The sequence \eqref{EQ:Sequence} is stationary after
	finitely many steps, i.e.\;$\varphi_i$ is an automorphism
	for $i$ large enough.
	\end{enumerate}
	
	Property $a)$ will serve to show Property $b)$, which
	directly implies the result. It thus remains to prove these two properties.
	
	\smallskip
	
	{\it Proof of Property $a)$}:
	
	In case $i)$, there is nothing to prove.
	
	If we are in case $ii)$, then for all 
	$g \in G_i \setminus \Aut(X_i, B_i)$ the reduced decomposition 
	into elementary links starts with $\tau_0\colon \FF_1 \to \PP^2$
	and ends with $(\tau_0)^{-1}$, since
	$G_i$ is a group. The conjugation of $G_i$ by $\tau_0$ decreases the 
	length of any element of $G_i\setminus \Aut(X_i, B_i)$ by two. 
	Moreover, $(X_i,B_i)=(\FF_1,E_1\cup L_{\FF_1})$, so $\Aut(X_i,B_i)$
	preserves the curve $E_1$ contracted by $\tau_0$, 
	which implies that 
	$\tau_0 \Aut(X_i, B_i) (\tau_0)^{-1}\subset \Aut(\p^2,L_{\p^2})=
	\Aut(X_{i+1},B_{i+1})$. It follows that a) is satisfied.
			
	Assume that we are in case $iii)$. As we are not in case $i)$
	there exists $p_0 \in X_i$ such that $(\Gamma_i \cdot L_{X_i})_{p_0} > 1$. 
	Moreover, by definition of case $iii)$, there exists
	$g \in G_i \setminus \Aut(X_i, B_i)$ such that the reduced decomposition
	into elementary links starts with 
	$\varphi_{i+1}\colon (X_i,B_i)\dasharrow (X_{i+1},B_{i+1})$, 
	and $\varphi_{i+1}$ has a base-point $p$ since it is not of type III.
	Applying Corollary~\ref{COR:Key} to $g$, we see that 	
	$(\Gamma_i \cdot L_{X_i})_{p} > 1$, 
	\tr{ and that either the height 
	of $\Gamma_i$ at 
	$p$ is bigger than the height of $\Gamma_i$ at 
	every other point of 
	$\Gamma_i \cap L_{X_i}$ and $\Gamma_i$ is singular at $p$, 
	or that all points
	in $\Gamma_i \cap L_{X_i} \setminus \{ p \}$ intersect transversally 
	$L_{X_i}$.}
	This implies that any element $h \in \Aut(X_i,B_i) \cap G_i$ fixes $p$, 
	which implies $\varphi_{i+1} h \varphi_{i+1}^{-1} \in \Aut(X_{i+1},B_{i+1})$.
	We can then  apply Corollary~\ref{COR:Key} to any element 
	$h\in G_i \setminus \Aut(X_i, B_i)$ \tr{that does not start with a link of 
	type $\mathrm{III}$}, and see that $p$ is the unique proper 
	base-point of $h$, so the reduced decomposition of $h$ starts with 
	$\varphi_{i+1}$. \tr{It remains to observe that no element 
	$h\in G_i \setminus \Aut(X_i, B_i)$ starts with a link 
	$\tau$ of type $\mathrm{III}$. Indeed, otherwise we 
	would have $X_i=\FF_1$ and $\varphi_{i+1}$ is an elementary 
	link from $\FF_1$ to $\FF_2$, so $p$ is the intersection point 
	$E_1\cap L_{\FF_1}$.  Since $(\Gamma_i \cdot L_{\FF_1})_p > 1$, 
	it follows that 
	$(\tau(\Gamma_i) \cdot L_{\p^2})_{\tau(p)}>
	(\Gamma_i \cdot L_{\FF_1})_p > 1$. 
	We can then apply Proposition~\ref{PROP:Key} to 
	$h\tau^{-1}\colon (\p^2,L_{\p^2})\dasharrow (\FF_1,E_1\cup L_{\FF_1})$, 
	and find some other point $p'\in L_{\FF_1}$ where $\Gamma_i$ has height (strictly)
	bigger than $p$, or such that $\Gamma_i$ is smooth at $p$ and $p'$ and  
	$(\Gamma_i \cdot L_{\FF_1})_{p'}>(\Gamma_i \cdot L_{\FF_1})_{p}$, 
	a contradiction.} This yields $a)$.		
		
	\smallskip
		
	{\it Proof of Property $b)$}:
	
	{Property $a)$ implies that for each $i$ such that $\varphi_1,\dots,\varphi_i$ are not isomorphisms, the product $\varphi_i \cdots \varphi_1$ is
	a reduced decomposition into elementary links}.
	For any $i\ge 0$ such that neither
	$\varphi_{i+1}$ nor $\varphi_{i+2}$ is an isomorphism, 
	we define $p_i\in L_{X_i}\subset B_i\subset X_i$ as the unique
	proper base-point of $\varphi_{i+2} \circ \varphi_{i+1}$.
	
	Under the assumption that neither of the three maps 
	$\varphi_{i+1}, \varphi_{i+2}$ and $\varphi_{i+3}$ is an isomorphism
 	we show that 
	$(\Gamma_i\cdot L_{X_i})_{p_i}\ge 
	(\Gamma_{i+1}\cdot L_{X_{i+1}})_{p_{i+1}}$ and 
	give some consequences when equality holds:
	\begin{itemize}
		\item {\it $\varphi_{i+1}$ is a link of type $\mathrm{II}$}:
		we can write $\varphi_{i+1} = \eta \circ \pi^{-1}$, where 
		$\pi\colon S\to X_i$ is the blow-up of $p_i$
		and $\eta\colon S\to X_{i+1}$ is the blow-up of a point $q\not=p_{i+1}$.
		Denote by $\widetilde{\Gamma_i},\widetilde{L_{X_i}}\subset S$ 
		the strict transforms of $\Gamma_i$ and $L_{X_i}$, and let  
		$E_{p_i}$ be the exceptional divisor of $\pi$.
		It follows that {$\eta$ contracts $\widetilde{L_{X_i}}$}. 
		Note that $\eta^{-1}$ restricts to an 
		isomorphism in a neighbourhood of
		 $p_{i+1}$, which sends $L_{X_{i+1}}$ onto $E_{p_i}$. 
		 This yields the following estimate
		\begin{eqnarray*}
			(\Gamma_i \cdot L_{X_i})_{p_i} 
			&\geq& m_{p_i}(\Gamma_i) \\
			&=& \widetilde{\Gamma_i} \cdot E_{p_i} \\
			&\geq& 
			(\widetilde{\Gamma_i} \cdot E_{p_i})_{\eta^{-1}(p_{i+1})} \\
			&=& (\Gamma_{i+1} \cdot L_{X_{i+1}})_{p_{i+1}}
		\end{eqnarray*}
		where $m_{p_i}(\Gamma_i)$ denotes the multiplicty of $\Gamma_i$
		in $p_i$. 
		Moreover, if 
		$(\Gamma_i \cdot L_{X_i})_{p_i} = m_{p_i}(\Gamma_i) =
		(\Gamma_{i+1} \cdot L_{X_{i+1}})_{p_{i+1}} > 1$, then
		$\h_{\Gamma_i}(p_i) > \h_{\Gamma_{i+1}}(p_{i+1})$.
					
		\item {\it $\varphi_{i+1}$ is a link of type $\mathrm{I}$}:
		Remark that $(\varphi_{i+1})^{-1}\colon \FF_1\to\p^2$ is the blow-up of
		$p_i\in \p^2$ and that $\Gamma_{i+1}$ and
		$L_{\FF_1}$ are the strict transforms of $\Gamma_i$ and $L_{\PP^2}$ 
		under $(\varphi_{i+1})^{-1}$ respectively. It follows that
		$(\Gamma_i \cdot L_{\PP^2})_{p_i} \ge 
		(\Gamma_{i+1} \cdot L_{\FF_1})_{p_{i+1}} \, .$ {Moreover, the equality implies that $(\Gamma_i \cdot L_{\PP^2})_{p_i} = 
		(\Gamma_{i+1} \cdot L_{\FF_1})_{p_{i+1}}= 0$.}
				
		\item {\it $\varphi_{i+1}$ is a link of type $\mathrm{III}$}: 
		Since $\varphi_{i+1}$
		is an isomorphism in a neighbourhood of 
		{$p_i$ and $p_{i+1} = \varphi_{i+1}(p_i)$}
		it follows that
		$(\Gamma_i \cdot L_{\FF_1})_{p_i} = 
		(\Gamma_{i+1} \cdot L_{\PP^2})_{p_{i+1}}$ and
		$\h_{\Gamma_i}(p_i) = \h_{\Gamma_{i+1}}(p_{i+1})$.
	\end{itemize}
	
	Now, assume towards a contradiction, that the sequence
	\eqref{EQ:Sequence} is never stationary, i.e.\;$\varphi_{i}$ is not 
	an isomorphism for all $i \ge 1$.
	According to this case-by-case-analysis, we see that
	$(\Gamma_i \cdot L_{X_i})_{p_i}$ is a decreasing sequence in $i$
	and {for every $r > 1$ there are} only finitely many $i$ with
	$(\Gamma_i \cdot L_{X_i})_{p_i} = r$. Thus, there exists $I$
	such that for all $i \geq I$ we have $(\Gamma_i \cdot L_{X_i})_{p_i} {\leq 1}$.
	{As the sequence \eqref{EQ:Sequence} is not stationary, 
	we have a {$\varphi_{i+1}$} with 
	$i\ge I$ which is a link of type $\mathrm{I}$ or $\mathrm{II}$. 
	The point $p_i$ is thus the base-point of {$\varphi_{i+1}$} and 
	satisfies $(\Gamma_i \cdot L_{X_i})_{p_i} {\leq 1}$. 
	By Corollary~\ref{COR:Key} applied to an \tr{element} of $G_i\setminus \Aut(X_i,B_i)$ that starts with $\varphi_{i+1}$, this 
	implies that $(\Gamma_i \cdot L_{X_i})_q = 1$ for all $q \in L_{X_i} \cap \Gamma_i$. 
	Hence, we are in case $i)$, a contradiction.}
\end{proof}

\begin{lem}\label{LEM:Conic}
Let $(X,B)=(\p^2,L_{\p^2})$, and let $\Gamma$ be a conic $($as always, reduced but not necessarily irreducible$)$ in $X=\p^2$, intersecting $L_{\p^2}$ in two distinct points $($not necessarily defined over $\K)$.

Then,  $\Bir((X,B),\Gamma) \subseteq \Aut(X, B)$.
\end{lem}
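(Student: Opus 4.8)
The plan is to argue by contradiction: I would view an element of $\Bir((X,B),\Gamma)\setminus\Aut(X,B)$ as a Cremona transformation of $\p^2$ and extract a numerical contradiction from Noether's equations. Note first that the height-based tools of Proposition~\ref{PROP:Key} and Corollary~\ref{COR:Key} do \emph{not} apply here, since they require a point $p$ with $(L_X\cdot\Gamma)_p>1$, whereas our $\Gamma$ meets $L_{\p^2}$ transversally; so a degree count is the natural substitute. Suppose then $\varphi\in\Bir((X,B),\Gamma)\setminus\Aut(X,B)$. As $\varphi$ is a birational self-map of $(\p^2,L_{\p^2})$ inducing an automorphism of $\A^2=\p^2\setminus L_{\p^2}$, all its base-points lie on $L_{\p^2}$ (as proper or infinitely near points) and are defined over $\k$, by the corollary following Proposition~\ref{PROP:ExistenceDecomposition}. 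Writing $d=\deg\varphi$ for its degree as a Cremona map, $\varphi$ is linear exactly when $d=1$, in which case it lies in $\PGL(3,\k)$ and, preserving $\A^2$, also preserves $L_{\p^2}$, so $\varphi\in\Aut(X,B)$; as we assumed the contrary, $d\ge 2$.

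The key step is to control which base-points of $\varphi$ lie on $\Gamma$. By Proposition~\ref{PROP:ReducedDecomposition}, $\varphi$ has a unique proper base-point $p\in L_{\p^2}$, a $\k$-point, and all its base-points lie over $p$. Since $\Gamma$ is a reduced conic meeting $L_{\p^2}$ transversally in two distinct points, it is smooth along $L_{\p^2}$ (a reducible conic has its node off $L_{\p^2}$, as otherwise $\Gamma\cap L_{\p^2}$ would be a single point). I claim the only base-point that can lie on $\Gamma$ is $p$, with $m_p(\Gamma)=1$. Indeed, if $p\in\Gamma$ the reduced decomposition starts with the type~$\mathrm{I}$ link blowing up $p$ (giving $\FF_1$), and by Lemma~\ref{Lem:SmallReduction} the next link is the type~$\mathrm{II}$ link $\FF_1\dashrightarrow\FF_2$ centred at $E_1\cap L_{\FF_1}$; by transversality the strict transform of $\Gamma$ meets $E_1$ at the point recording the tangent direction of $\Gamma$ at $p$, which is distinct from $E_1\cap L_{\FF_1}$, so this base-point and all later ones (lying over it) avoid the strict transform of $\Gamma$.

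Now let $m_1,\dots,m_k$ be the multiplicities of $\varphi$ at its base-points $q_1=p,q_2,\dots,q_k$. Noether's equations (from intersection theory on the minimal resolution of $\varphi$) give $\sum_i m_i=3(d-1)$ and $\sum_i m_i^2=d^2-1$, while the image satisfies $\deg\varphi(\Gamma)=2d-\sum_i m_i\,m_{q_i}(\Gamma)$. Since $\varphi(\Gamma)=\Gamma$ has degree $2$ and, by the previous paragraph, $\sum_i m_i\,m_{q_i}(\Gamma)$ equals $m_1$ (if $p\in\Gamma$) or $0$ (if $p\notin\Gamma$), we obtain $2=2d-m_1$ or $2=2d$. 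The second forces $d=1$, a contradiction. The first gives $m_1=2(d-1)$, whence $4(d-1)^2=m_1^2\le\sum_i m_i^2=d^2-1=(d-1)(d+1)$, i.e.\ $4(d-1)\le d+1$ and $d\le 5/3$, again contradicting $d\ge 2$. Hence no such $\varphi$ exists, so $\Bir((X,B),\Gamma)\subseteq\Aut(X,B)$.

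The main obstacle is precisely the claim in the second paragraph: bounding $\sum_i m_i\,m_{q_i}(\Gamma)$ requires knowing that no \emph{infinitely near} base-point sits on $\Gamma$, for otherwise the multiplicities of $\Gamma$ along the exceptional chain could accumulate and spoil the count. Transversality of $\Gamma$ with $L_{\p^2}$ is exactly what guarantees that one blow-up separates the strict transform of $\Gamma$ from the boundary, and the forced shape of the reduced decomposition (Lemmas~\ref{Lem:DescriptElLink} and~\ref{Lem:SmallReduction}) pins the subsequent base-point off $\Gamma$. A secondary technical point is the validity of the numerical input over a non-closed field and in arbitrary characteristic, which is harmless here because all base-points are $\k$-rational, so one may base-change to $\kk$ without altering any intersection number.
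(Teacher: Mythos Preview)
Your proof is correct and takes a genuinely different route from the paper's. The paper argues by tracking $\Gamma$ explicitly through the elementary link decomposition: it first uses Proposition~\ref{PROP:Key} (in the contrapositive, with $A$ equal to the two transversal intersection points) to force the proper base-point to lie on $\Gamma$; it then follows the strict transform of $\Gamma$ up and down the first ``bump'' $\p^2\to\FF_1\to\cdots\to\FF_n\to\cdots\to\FF_1\to\p^2$, showing that the image acquires a singular point with two branches on $L_{\p^2}$, and finally invokes Proposition~\ref{PROP:Key} once more to see that this singularity persists through the remaining links, contradicting $\varphi(\Gamma)=\Gamma$. Your approach replaces this geometric tracking by a global degree count via Noether's equations, which is cleaner and more elementary once the base-point structure is pinned down. (So your opening remark that Proposition~\ref{PROP:Key} ``does not apply here'' is slightly off: the paper does use it, just via the hypothesis $\sum_{p\in A}(L_X\cdot\Gamma)_p>1$ rather than a single point with $(L_X\cdot\Gamma)_p>1$.)

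The one step that is a bit thin in your write-up is the assertion that ``this base-point and all later ones (lying over it)'' avoid $\Gamma$. What you need is that every base-point $q_i$ with $i\ge 2$ is infinitely near to $q_2=E_1\cap L_{\FF_1}$, so that once the strict transform of $\Gamma$ on $\FF_1$ misses $q_2$ it automatically misses the whole tower. This is true, but it is not stated as such in the paper; it follows by applying Proposition~\ref{PROP:ReducedDecomposition} to the tail $\varphi_m\cdots\varphi_2\colon(\FF_1,E_1\cup L_{\FF_1})\dashrightarrow(\p^2,L_{\p^2})$, whose first link $\varphi_2$ is of type~$\mathrm{II}$ with base-point $q_2$, so that $q_2$ is its unique proper base-point and all the remaining $q_i$ lie over it. With that sentence added, your argument is complete.
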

\begin{proof}
Suppose, to the contrary, the existence of $g\in \Bir((X,B),\Gamma)$, which is not an automorphism of $\p^2$.

Applying Proposition~\ref{PROP:Key}, one of the two points of $\Gamma\cap L_X$ is a base-point of~$g$ (and in particular it is defined over~$\K$). By Proposition~\ref{PROP:ExistenceDecomposition} and 
	Lemma~\ref{Lem:SmallReduction}, there exists an integer $n\ge 2$ such that the reduced decomposition of $g$ into elementary links starts with 
$\varphi_{2n} \varphi_{2n-1} \cdots \varphi_1$, where $\varphi_1\colon \p^2\bir\FF_1$,  $\varphi_{2n}\colon \FF_1\to \p^2$, $\varphi_i\colon \FF_{i-1}\bir\FF_{i}$ and $\varphi_{i+n-1}\colon \FF_{n-i+2}\bir \FF_{n-i+1}$ for $i=2,\dots,n$. 
The curve $\varphi_1(\Gamma)$ intersects transversally $E_1$ in one point and $L_{\FF_1}$ in one point, and does not pass through the intersection point of $E_1$ and $L_{\FF_1}$, blown-up by $\varphi_2$. This implies that the same holds for $\varphi_2\varphi_1(\Gamma)$, with  $E_2$ and $L_{\FF_2}$, and that $\varphi_2\varphi_1(\Gamma)$ passes through the point blown-up by $(\varphi_2)^{-1}$. Proceeding by induction, the curve $\varphi_n\cdots\varphi_1(\Gamma)\subset \mathbb{F}_n$ intersects  transversally $E_n$ in one point and 
$L_{\FF_n}$ in one point, and 
%does not pass through the intersection point of $E_n$ and $L_{\FF_n}$. 
%However, $\varphi_n\cdots\varphi_1(\Gamma)\subset \mathbb{F}_n$ 
{passes through the point blown-up by $(\varphi_n)^{-1}$}. In consequence, it 
does not pass through the point blown-up by $\varphi_{n+1}$, 
which implies that $\varphi_{n+1}\varphi_n\cdots\varphi_1(\Gamma)\subset \mathbb{F}_{n-1}$ intersects $E_{n-1} \cup L_{\FF_{n-1}}$ in two distinct points, both being on $E_{n-1}$. Proceeding by induction, the curve $\varphi_{2n-1}\cdots\varphi_n\cdots\varphi_1(\Gamma)\subset \mathbb{F}_{1}$ intersects $E_{1} \cup L_{\FF_{1}}$ in two distinct points, both on $E_1$.  The curve $\varphi_{2n}\cdots\varphi_1(\Gamma)\subset \p^2$ intersects $L_{\p^2}$ in one point $q$, and is singular at this point, with two branches. The remaining part of the decomposition of $g$ is equal to $\varphi_m\cdots\varphi_{2n+1}$, and its unique proper base-point is different from $q$. Proposition~\ref{PROP:Key} implies that $g(\Gamma)$ is singular at a point of $B=L_{\p^2}$, which is a contradiction.
\end{proof}
\begin{prop}
	\lab{PROP:Main2}
	Let $(X, B)$ be a natural completion of $\AA^2$ and let $\Gamma$
	be a curve in $X$.
	If $\Gamma$ intersects transversally $L_X$, then 
	there exists a natural completion 
	$(X', B')$
	of $\AA^2$ and a birational map $\varphi \colon (X, B) \bir (X', B')$ 
	such that one of the following holds:
	
	$i)$ $\Bir((X',B'),\varphi(\Gamma)) \subseteq \Aut(X', B')$;
	
	$ii)$ $X'=\p^2$ and $\varphi(\Gamma)\subset \p^2$ is defined by a polynomial in $\K[x]$.
\end{prop}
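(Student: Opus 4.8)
The plan is to start from the hypothesis that $\Gamma$ intersects $L_X$ transversally, and to reduce to the case $X=\p^2$ by running a sequence of elementary links, just as in the proof of Proposition~\ref{PROP:Main1}, but now keeping track of the intersection multiplicities of $\Gamma$ with $L_X$ (which are all $\le 1$) rather than the singularities on the boundary. If $\Bir((X,B),\Gamma)\subseteq\Aut(X,B)$ we are immediately in case $i)$, so I would assume there exists $g\in\Bir((X,B),\Gamma)\setminus\Aut(X,B)$. First I would observe that a link of type $\mathrm{I}$ or $\mathrm{II}$ starting the reduced decomposition of such a $g$ has a base-point $p\in L_X\cap\Gamma$ with $(L_X\cdot\Gamma)_p\le 1$; by Corollary~\ref{COR:Key} this forces $\Gamma\cap L_X=\{p\}$ with $(L_X\cdot\Gamma)_p=1$, i.e.\ $\Gamma$ meets $L_X$ transversally in a single $\k$-rational point. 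The dichotomy I want is then: either $\Gamma$ meets $L_X$ in $\ge 2$ points (geometrically), in which case every $g$ contradicts Corollary~\ref{COR:Key} and we land in case $i)$; or $\Gamma$ meets $L_X$ in a single $\k$-point transversally.

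In the second situation the strategy is to move this unique boundary point into a controlled position and then contract to $\p^2$. Concretely, I would apply the same inductive link-construction as in Proposition~\ref{PROP:Main1}: whenever we are not already in case $i)$, choose $g\in G_i\setminus\Aut(X_i,B_i)$ and follow the first link of its reduced decomposition, so that the conjugated group $\varphi_{i+1}G_i\varphi_{i+1}^{-1}$ has strictly shorter elements (Property~$a)$ of that proof applies verbatim, since the single transversal intersection point is fixed by $\Aut(X_i,B_i)\cap G_i$ by Corollary~\ref{COR:Key}). The transversality $(L_{X_i}\cdot\Gamma_i)_{p_i}\le 1$ is preserved under the blow-up/contraction of a link of type $\mathrm{I}$ or $\mathrm{II}$ centered at $p_i$, by the same intersection-multiplicity computation as in Proposition~\ref{PROP:Main1}; so the process never creates a singularity of $\Gamma$ on the boundary, and by the termination argument of Property~$b)$ it stabilizes after finitely many steps with $\Bir\subseteq\Aut$, unless it terminates by reaching $X'=\p^2$ with the transversal point at a preferred location.

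It remains to identify the terminal $X'=\p^2$ case with the normal form $ii)$, and this is where the real work lies. Once we are on $(\p^2,L_{\p^2})$ with $\Gamma$ meeting $L_{\p^2}$ transversally in a single $\k$-rational point $q$, I would use a linear automorphism of $\p^2$ to place $q$ at $(0:1:0)$, i.e.\ the point at infinity of the vertical lines $x=\mathrm{const}$. A curve of degree $d$ meeting $L_{\p^2}$ only at $q$, transversally and in a single point, must have its degree-$d$ part proportional to $y^{d}$ in affine coordinates, i.e.\ the leading form in $y$ is constant — and I would want to conclude that after this normalization the affine equation of $\Gamma$ is, up to an automorphism of $\A^2$ preserving the vertical pencil (a de~Jonqui\`eres map, allowed since these are automorphisms of the completion $\FF_n$), a polynomial in $\k[x]$ alone. \textbf{The main obstacle} I anticipate is precisely this last step: transversality in a single point at infinity only controls the top-degree behaviour, and one must argue that the Jonqui\`eres-type reductions available on the Hirzebruch tower can kill the $y$-dependence entirely and produce an equation in $\k[x]$, rather than merely a curve that is a graph over the $x$-line; I expect to need the termination mechanism above to guarantee that the process does not loop, and a careful multiplicity bookkeeping to show the residual $y$-dependence is eliminated exactly when case $ii)$ is reached. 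If that elimination fails, the correct conclusion should be that we were in case $i)$ all along, so the two cases of the proposition are genuinely exhaustive.
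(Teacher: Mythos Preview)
Your proposal has several genuine gaps that prevent it from working.

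\textbf{Misuse of Corollary~\ref{COR:Key}.} That corollary has as a standing hypothesis that $(L_X\cdot\Gamma)_p>1$ for some $p$, which is exactly what transversality rules out. You invoke it repeatedly (to force $\Gamma\cap L_X=\{p\}$, to show $p$ is fixed by $\Aut\cap G_i$, to establish Property~$a)$), but none of these invocations is licensed. Once the corollary is withdrawn, there is no reason every $g\in G_i\setminus\Aut(X_i,B_i)$ should start with the same first link, so the length--decreasing mechanism of Proposition~\ref{PROP:Main1} does not carry over.

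\textbf{The dichotomy is wrong at $2$ points.} If $\Gamma$ meets $L_X$ transversally in exactly two points and the base-point of $\varphi_1$ is one of them, the set $A$ in Proposition~\ref{PROP:Key}$\,b)$ has $\sum_{p\in A}(L_X\cdot\Gamma)_p=1$, so the key proposition gives nothing. On $\p^2$ this is precisely the conic case, and the paper needs a separate, rather delicate argument (Lemma~\ref{LEM:Conic}) to exclude it. On $\FF_n$ the analogous two-point case requires a careful tracking of the curve through the ``up--down'' chain of type~$\mathrm{II}$ links (part $c)$ of the paper's proof). Your claim that ``$\ge 2$ points contradicts Corollary~\ref{COR:Key}'' is only correct for $\ge 3$ points (via Proposition~\ref{PROP:Key}).

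\textbf{You miss the actual source of case $ii)$.} A curve in $\p^2$ meeting $L_{\p^2}$ transversally in a single point has degree~$1$ by B\'ezout: it is a line, and after a linear change it is $x=0$. There is no ``degree-$d$ leading form'' to worry about and no Jonqui\`eres reduction to perform; your anticipated main obstacle is a phantom. The substantive occurrence of case $ii)$ (a nonlinear polynomial in $\k[x]$) comes from $X=\FF_n$ with $\Gamma\cap L_{\FF_n}=\varnothing$, i.e.\ $\Gamma$ is a union of fibres; one then passes to $\p^2$ and obtains a product of linear factors in $x$. This entire case is absent from your outline, and your dichotomy ``$\ge 2$ points vs.\ exactly $1$ point'' overlooks it.

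In short, the paper's proof proceeds by a direct case analysis ($X=\p^2$; $X=\FF_n$ with a type~$\mathrm{III}$ link somewhere; $X=\FF_n$ with only type~$\mathrm{II}$ links), using Proposition~\ref{PROP:Key} rather than Corollary~\ref{COR:Key}, together with the special Lemma~\ref{LEM:Conic}. Reworking your approach along these lines is essentially what is required.
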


\begin{proof}
We can assume the existence of  $g\in \Bir((X,B),\Gamma)\setminus\Aut(X, B)$, admitting a reduced decomposition $g=\varphi_m\cdots \varphi_1$ into elementary links.

	{$a)$} Suppose that $X = \PP^2$. By Corollary~\ref{COR:Key}, the curve $\Gamma$ intersects $L_X$ in at most $2$ points, hence $\Gamma$ is a line or a conic. If $\Gamma$ is a line the equation can be chosen to be $x=0$, and if $\Gamma$ is a conic, the result follows from Lemma~\ref{LEM:Conic}.

	%Otherwise, we apply Corollary \ref{COR:Key} to $\varphi_{i+1}^{-1} \cdots \varphi_m^{-1}  \varphi_1^{-1}\cdots \varphi_i^{-1}$ and obtain $(\Gamma'\cdot L_{\p^2})_q>1$.

	{$b)$} Suppose that {$X\not\simeq\p^2$} but that there exists $1\le i\le m$ such that $\varphi_i$ is  a link $\varphi_i \colon \FF_1\to \p^2$ of type $\mathrm{III}$, contracting $E_1$ onto a point $q\in L_{\p^2}$. If $\Gamma'=\varphi_i\cdots \varphi_1(\Gamma)$ intersects transversally $L_{\p^2}$, we conclude by applying case {$a)$} to $\Gamma'$. There exists thus a point $p\in L_{\p^2}$ with $(\Gamma'\cdot L_{\p^2})_p>1$. We claim that $p$ can be chosen to be equal to $q$. If $i=1$, this is true because $\Gamma'=\varphi_1(\Gamma)$ and $\Gamma$ intersects transversally $L_{\FF_1}$, which is the strict transform of $L_{\p^2}$. If $i>1$, the claim follows from Proposition~\ref{PROP:Key}, applied to 
$(\varphi_1)^{-1}\cdots(\varphi_i)^{-1}\colon (\p^2,\tr{L_{\p^2}})\dasharrow (X,B)$.
 Since $\varphi_{i+1}$ is a link of type $\mathrm{I}$ and $q$ is not a base-point of $\varphi_{i+1}$, $\varphi_{i+1}(\Gamma')$ does not intersect transversally $L_{\FF_1}$. This implies that $m\ge i+2$, and that $(\varphi_m\cdots\varphi_{i+1})^{-1}$ has at least one base-point. Applying Proposition~\ref{PROP:Key} to $\varphi_m\cdots \varphi_{i+1}$, the curve $\Gamma=\varphi_m\cdots\varphi_{i+1}(\Gamma')$ has a singular point at the proper base-point of $(\varphi_m\cdots\varphi_{i+1})^{-1}$. Since this one lies on $L_{X}$, we get a contradiction.
	
	{$c)$} We can now assume that $X=\mathbb{F}_n$ for some $n\ge 1$ and that all $\varphi_i$ are links of type $\mathrm{II}$. By Corollary~\ref{COR:Key}, $L_{\FF_n}\cap \Gamma$ contains at most $2$ points and if it contains $2$, then one of these is a base-point of $\varphi_1$.  
	If one of the points of $L_{\FF_n}\cap \Gamma$ is the intersection point $p_n$ of  $L_{\FF_n}$ and $E_n$, we perform an elementary link $\psi\colon \FF_n\bir \FF_{n+1}$. Because $L_{\FF_n}\cap \Gamma$ contains at most $2$ points, the curve $\psi(\Gamma)$ intersects transversally $L_{\FF_{n+1}}$, in at most $2$ points. Moreover, 
writing $\{p_{n+1}\}=L_{\FF_{n+1}}\cap E_{n+1}$, we obtain $(\psi(\Gamma)\cdot E_{n+1})_{p_{n+1}}=(\Gamma\cdot E_n)_{p_n}-1$. Performing a sequence of elementary links if needed, we reduce to the case where $L_{\FF_n}\cap \Gamma$  contains at most $2$ points and that none of them belong to $E_n$.

If $L_{\FF_n}\cap \Gamma$ is empty, then $\Gamma$ is contained in a finite set of fibres of $\FF_n \to \PP^1$. Going from $\FF_n$ to $\FF_1$ and then to $\p^2$, we send the fibres onto lines of the form $x=a$ where $a\in \KK$, and see that $L_{\FF_n}$ has equation in $\K[x]$.
	
	It remains to see that it is not possible, in the case where $L_{\FF_n}\cap \Gamma$  contains $1$ or $2$ points that belong to $L_{\FF_n}\setminus E_n$, to have $g\in \Bir((X,B),\Gamma)\setminus\Aut(X, B)$ having a reduced decomposition consisting only of links of type $\mathrm{II}$. Such an element has a decomposition $g=\varphi_{2k}\cdots \varphi_1$, where $k\ge 1$, $\varphi_i$ is a link $\FF_{n+i-1}\dasharrow \FF_{n+i}$ for $i=1,\dots,k$, $\varphi_i$ is a link $\FF_{n+2k-i+1}\bir \FF_{n+2k-i}$ for $i=k+1,\dots,2k$, and where $(\varphi_k)^{-1}$ and $\varphi_{k+1}$ do not have the same base-point (see Lemma~\ref{Lem:SmallReduction}). In particular, the base-point of $\varphi_1$ is the intersection point of $E_n$ and $L_{\FF_n}$, and does not lie on $\Gamma$, so $\varphi_1(\Gamma)$ passes through the base-point of $(\varphi_1)^{-1}$, which is not a base-point of $\varphi_2$. By induction, we deduce that $(\varphi_i\cdots \varphi_1)(\Gamma)$ passes through the base-point of $(\varphi_i)^{-1}$, for $i=1,\dots,2k$. In the case $i=2k$, this implies that $g(\Gamma)$ passes through the base-point of $(\varphi_{2k})^{-1}$, i.e.\;through the intersection point of $E_n$ and $L_{\FF_n}$, contradicting the fact that $g(\Gamma)=\Gamma$.
\end{proof}

\begin{cor}
	\lab{COR:Main}
	Let $(X, B)$ be a natural completion of $\AA^2$ and let $\Gamma$ be a curve in $X$ having no component in  $B$. Then, 
	there exists a natural completion $(X', B')$
	of $\AA^2$ and a birational map $\varphi \colon (X, B) \bir (X', B')$ 
	such that one of the following holds:
	
	$1)$ $\Bir((X',B'),\varphi(\Gamma)) \subseteq \Aut(X', B')$;
	
	$2)$  $X'=\p^2$ and $\varphi(\Gamma)\subset \p^2$ is defined by a polynomial in $\K[x]$.
\end{cor}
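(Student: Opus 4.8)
The plan is simply to chain together Proposition~\ref{PROP:Main1} and Proposition~\ref{PROP:Main2}, since their conclusions dovetail: the ``alternative'' output of the first, namely transversal intersection with the boundary line, is exactly the input hypothesis of the second. There is no new geometric content; the work is entirely in the bookkeeping of compositions and images.

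First I would apply Proposition~\ref{PROP:Main1} to the pair $(X,B)$ and the curve $\Gamma$. This produces a natural completion $(X_1,B_1)$ together with a birational map $\varphi_1\colon (X,B)\bir (X_1,B_1)$ for which one of two things happens. If $\Bir((X_1,B_1),\varphi_1(\Gamma))\subseteq \Aut(X_1,B_1)$, then I am already in case $1)$ of the corollary, taking $(X',B')=(X_1,B_1)$ and $\varphi=\varphi_1$. Otherwise, $\varphi_1(\Gamma)$ intersects $L_{X_1}$ transversally, and I pass to the second step.

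In the latter case I would apply Proposition~\ref{PROP:Main2} to the pair $(X_1,B_1)$ and the curve $\Gamma_1:=\varphi_1(\Gamma)$; note that $\Gamma_1$ has no component in $B_1$, being the closure of a subset of $X_1\setminus B_1$, and intersects $L_{X_1}$ transversally, so the hypotheses are met. This yields a natural completion $(X',B')$ and a birational map $\psi\colon (X_1,B_1)\bir (X',B')$. If $\Bir((X',B'),\psi(\Gamma_1))\subseteq \Aut(X',B')$, then case $1)$ of the corollary holds; if instead $X'=\p^2$ and $\psi(\Gamma_1)$ is defined by a polynomial in $\k[x]$, then case $2)$ holds. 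In either branch I set $\varphi:=\psi\circ\varphi_1\colon (X,B)\bir (X',B')$, which is again a birational map between natural completions because the composition of two such maps restricts to an isomorphism $X\setminus B\to X'\setminus B'$.

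The only point requiring care---and it is a routine verification rather than a genuine obstacle---is the functoriality of the image construction under composition. Since $\varphi_1$ restricts to an isomorphism $X\setminus B\to X_1\setminus B_1$ and $\psi$ to an isomorphism $X_1\setminus B_1\to X'\setminus B'$, one checks directly from the definition of $\varphi(\Gamma)$ as the closure of $\varphi(\Gamma\cap (X\setminus B))$ that $(\psi\circ\varphi_1)(\Gamma)=\psi(\varphi_1(\Gamma))=\psi(\Gamma_1)$. Consequently the group $\Bir((X',B'),\varphi(\Gamma))$ appearing in case $1)$ coincides with $\Bir((X',B'),\psi(\Gamma_1))$, and the curve $\varphi(\Gamma)$ appearing in case $2)$ coincides with $\psi(\Gamma_1)$, so both alternatives transport verbatim from Proposition~\ref{PROP:Main2} to the corollary. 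This completes the argument.
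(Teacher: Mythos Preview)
Your proposal is correct and follows exactly the paper's approach: the paper's proof simply states that the corollary ``follows directly from Propositions~\ref{PROP:Main1} and~\ref{PROP:Main2},'' which is precisely the chaining you describe. Your additional bookkeeping on compositions and images is accurate and merely spells out what the paper leaves implicit.
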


\begin{proof}
	Follows directly from Propositions~\ref{PROP:Main1} and~\ref{PROP:Main2}.
\end{proof}

Theorem~\ref{Thm:Main} is now a direct consequence of Corollary~\ref{COR:Main}:

\begin{proof}[Proof of Theorem~$\ref{Thm:Main}$]
In the case where the equation of $\Gamma$ is in $\K[x]$, i.e.\;when $\Gamma$ is a fence, the explicit description of $\Aut(\A^2,\Gamma)$ is an easy calculation. If $\Gamma$ is not equivalent to a fence by an automorphism of $\A^2$, Corollary~\ref{COR:Main} and Lemma~\ref{LEM:AutoNatCompl} imply that we can conjugate $\Aut(\A^2,\Gamma)$ to a subgroup of $\Aff(\A^2)$ or $\J_n$ for some $n$. This implies that $\Aut(\A^2,\Gamma)$ is an algebraic group. Moreover, we obtain a morphism of algebraic groups $\Aut(\A^2,\Gamma)\to \Aut(\Gamma)$. It remains to  observe that curves fixed pointwise by elements of $\J_n$ and $\Aff(\A^2)$ are 
{fences in a suitable coordinate system of $\AA^2$.}
\end{proof}

%\begin{cor}
%Let $\Gamma\subset \A^2$ be a closed curve. There exists an automorphism $f\in \Aut(\A^2)$, such that the curve $\Gamma'=f(\Gamma)$ and the group $G=\{g\in \Aut(\A^2) \mid g(\Gamma')=\Gamma'\}$ satisfy one of the following:
%
%
%	$1)$ $\Gamma'$ is the line of equation $x=0$ and $G=\{(x,y)\mapsto (ax,by+P(x))\ |\ a,b\in \K^{*}, P\in \K[y]\} =(\K^\ast)^2 \ltimes (\K[y])$.
%
%	
%	$2)$  $X'=\p^2$ and $\varphi(\Gamma)\subset \p^2$ is a line 
%	$($a curve of degree $1)$.
%	
%\end{cor}
\subsection{Generalisation to other subsets}
\label{GeneralizationsOtherSubsets.subsec}

\begin{defn}
If $\Delta\subset \A^2(\KK)$ is any subset, we denote by $\Aut(\A^2,\Delta)$ the group of automorphisms of $\A^2$ that leave the set $\Delta$ invariant, and denote by $\Aut_F(\A^2,\Delta)$ the group of automorphisms of $\A^2$ fixing 
any element of $\Delta$.
\end{defn}

By Definition, $\Aut_F(\A^2,\Delta)$ is always a normal subgroup of $\Aut(\A^2,\Delta)$. Moreover, if $\Aut(\A^2,\Delta)$ is an algebraic group, then $\Aut_F(\A^2,\Delta)$ is a an algebraic subgroup. Theorem~\ref{Thm:Main} implies the following result:

\begin{prop}
Let $\Delta\subset \A^2(\KK)$ be a subset. Applying an element $\varphi\in \Aut(\A^2)$, {one of } the following holds:

\begin{enumerate}[$i)$]
\item
The set $\Delta$ is contained in a fence given by $f(x)=0$ where $f\in \K[x]$, and the group $\Aut_F(\A^2,\Delta)$ is not algebraic: it contains the group 
$$\{(x,y)\mapsto (x,y+p(x)f(x))\mid p\in \K[x]\}\simeq \K[x].$$
\item
The group $\Aut(\A^2,\Delta)$ is equal to $\{g\in \Aff(\A^2) \mid g(\Delta)=\Delta\}$, and is thus an algebraic subgroup of $\Aff(\A^2)$. Moreover, $\Aut_F(\A^2,\Delta)$ is trivial.
\item
There exists an integer $n\ge 1$ such that the group 
$\Aut(\A^2,\Delta)$ is equal to $\{g\in \J_n\mid g(\Delta)=\Delta\}$, and is thus
an algebraic subgroup of $\J_n$. Moreover, $\Aut_F(\A^2,\Delta)$ is trivial.
\end{enumerate}
\end{prop}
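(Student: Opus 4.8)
**The plan is to deduce everything from Theorem~\ref{Thm:Main} applied to a suitable curve associated to the point-set $\Delta$.** The natural device is to consider the Zariski closure $\overline{\Delta}\subset\A^2$: any automorphism that preserves (respectively fixes pointwise) $\Delta$ preserves (respectively fixes pointwise) $\overline{\Delta}$, so I reduce to understanding automorphisms compatible with a closed subset of $\A^2$. The closure $\overline{\Delta}$ is either $0$-dimensional (finitely many points), or contains a curve $\Gamma$ as its $1$-dimensional part, or is all of $\A^2$. Since any automorphism of $\A^2$ fixing a Zariski-dense subset is the identity, the case $\overline{\Delta}=\A^2$ makes both $\Aut_F$ trivial and $\Aut(\A^2,\Delta)$ harmless, so the interesting content is when $\overline{\Delta}$ has dimension $\le 1$. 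I would therefore apply Theorem~\ref{Thm:Main} to the reduced curve $\Gamma$ cut out by the $1$-dimensional part of $\overline{\Delta}$ (taking $\Gamma=\emptyset$ in the purely $0$-dimensional case and handling that separately inside the affine or Jonqui\`eres alternative).

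First I would bring $\Gamma$ into one of the two normal forms of Theorem~\ref{Thm:Main} by an automorphism $\varphi\in\Aut(\A^2)$, which is exactly the ``applying an element $\varphi$'' in the statement. In case $i)$ of Theorem~\ref{Thm:Main}, the curve is a fence $F(x)=0$; here $\Delta$ itself lies in the fence, and the explicit description of $\Aut(\A^2,\Gamma)$ shows it contains all maps $(x,y)\mapsto(x,y+P(x))$ fixing the fence pointwise. Restricting to those $P$ divisible by $f$ (the minimal polynomial in $\k[x]$ of the $x$-coordinates occurring in $\Delta$, or simply $F$) gives maps fixing every point of $\Delta$, producing the promised copy of $\k[x]$ inside $\Aut_F(\A^2,\Delta)$ and thus its non-algebraicity. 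In case $ii)$, $\Aut(\A^2,\Gamma)$ is contained in $\Aff(\A^2)$ or some $\J_n$; since $\Aut(\A^2,\Delta)$ sits inside $\Aut(\A^2,\Gamma)$ it is likewise contained in such a group, and intersecting the algebraic stabiliser condition $g(\Delta)=\Delta$ yields an algebraic subgroup. The triviality of $\Aut_F$ in cases $(ii)$ and $(iii)$ follows because a nontrivial element of $\Aff(\A^2)$ or $\J_n$ fixing $\Delta$ pointwise would fix a nondegenerate (non-fence) configuration, forcing, via the fixed-locus analysis at the end of the proof of Theorem~\ref{Thm:Main}, that $\Delta$ lie in a fence --- contradicting that we are not in case $(i)$.

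The main obstacle, and the step I would spend the most care on, is handling the purely $0$-dimensional closure and, more subtly, the boundary between the three cases: I must show the trichotomy is genuinely governed by whether $\Delta$ (after normalisation) sits inside a fence. The key lemma to isolate is that an element of $\J_n$ or $\Aff(\A^2)$ fixing $\Delta$ pointwise has a fixed locus which is a union of curves each equivalent to a line, and if this fixed locus is forced to contain $\Delta$ without $\Delta$ lying in a fence, the only remaining possibility is the identity. I would phrase this via the observation already used for Theorem~\ref{Thm:Main} that curves fixed pointwise by such elements are fences in suitable coordinates; then a point-set not contained in any fence cannot be fixed pointwise by a nontrivial such map, giving the clean dichotomy between the non-algebraic fence case $(i)$ and the algebraic, $\Aut_F$-trivial cases $(ii)$ and $(iii)$.
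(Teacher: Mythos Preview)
Your approach coincides with the paper's: pass to the Zariski closure $\overline{\Delta}$, split by dimension, and invoke Theorem~\ref{Thm:Main} on the curve part $\Gamma$; the triviality of $\Aut_F$ in cases $(ii)$/$(iii)$ then follows because elements of $\Aut_F(\A^2,\Delta)$ fix $\overline{\Delta}\supset\Gamma$ pointwise and Theorem~\ref{Thm:Main} gives $\Aut(\A^2,\Gamma)\hookrightarrow\Aut(\Gamma)$. Your fixed-locus formulation of this last step is equivalent and fine.

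There is one misdirection. You describe the purely $0$-dimensional case as ``the main obstacle'' to be ``handled separately inside the affine or Jonqui\`eres alternative''. In fact it is the easiest case and lands \emph{immediately} in $(i)$: if $\overline{\Delta}=\{p_1,\dots,p_r\}$ is finite, then since $I(\Delta)\subset\k[x,y]$ the set of $x$-coordinates of the $p_i$ is the zero locus of some nonzero $f\in\k[x]$, so $\Delta\subset\{f(x)=0\}$ and $\Aut_F(\A^2,\Delta)$ visibly contains all shears $(x,y)\mapsto(x,y+p(x)f(x))$. Hence $\Aut_F$ is not algebraic, which \emph{rules out} cases $(ii)$ and $(iii)$ rather than leading into them. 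The paper dispatches this in one sentence. A related omission: when $\Gamma$ is a fence but $\overline{\Delta}=\Gamma\cup\{\text{isolated points}\}$, one must enlarge the fence to swallow the extra points before writing down the $\k[x]$-subgroup; you assert ``$\Delta$ itself lies in the fence'' without this adjustment.
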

\begin{proof}
Denote by $I(\Delta)\subset \K[x,y]$ the ideal of polynomials vanishing on $\Delta$, and by $\overline{\Delta}\subset \A^2(\KK)$ {the closure} of $\Delta$, which is the set of points where $I(\Delta)$ vanishes. Then we obtain $\Aut(\A^2,\Delta)=\Aut(\A^2,\overline{\Delta})$ and $\Aut_F(\A^2,\Delta)=\Aut_F(\A^2,\overline{\Delta})$. We can thus replace $\Delta$ with $\overline{\Delta}$. 

If $\Delta$ is a finite union of points, we get case $i)$. Otherwise, 
$\Delta$ consists of one curve $\Gamma$ (reduced but not necessarily irreducible) and a finite number of points of $\A^2(\KK)$. Thus, 
$\Aut(\A^2,\Delta)\subset \Aut(\A^2,\Gamma)$. The result follows then from the description of $\Gamma$ and $\Aut(\A^2,\Gamma)$, given in Theorem~\ref{Thm:Main}.
\end{proof}

In the case where $\Delta$ is finite, the group $\Aut(\A^2,\Delta)$ is quite big; indeed it is  often maximal. This is the case for example when $\K=\mathbb{C}$, as pointed out to us by J.-P. Furter and P.-M. Poloni. This is a consequence of the following observation.
 
 \begin{lem}\label{lem:G}
 Let $G$ be a group acting on a set $S$. Let $\Delta\subset S$ be a finite subset of $r\ge 1$ points. Suppose that $G$ acts $2r$-transitively on $S$, and that $\lvert S\rvert >2r$. Then, 
 $$G_{\Delta}=\{g\in G\ |\ g(\Delta)=\Delta\}$$
is a maximal subgroup of $G$.
 \end{lem}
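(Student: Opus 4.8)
The plan is to prove that $G_\Delta$ is maximal by showing that any subgroup $H$ with $G_\Delta \subsetneq H \subseteq G$ must equal $G$. The key is to exploit the high transitivity hypothesis, namely that $G$ acts $2r$-transitively on $S$, to promote a single ``extra'' permutation in $H \setminus G_\Delta$ into full control over the action. First I would fix the setup: write $\Delta = \{a_1, \ldots, a_r\}$, and note that since $G$ acts $r$-transitively (a consequence of $2r$-transitivity, as $|S| > 2r \ge r$), the group $G$ acts transitively on the set of all $r$-element subsets of $S$; hence $G_\Delta$ is the stabiliser of the point $\Delta$ in this induced action, and its cosets $G/G_\Delta$ are in bijection with the $r$-subsets of $S$.

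The main step is as follows. Let $H$ be a subgroup with $G_\Delta \subsetneq H$. Then there exists $h \in H$ with $h(\Delta) = \Delta' \neq \Delta$. The aim is to show $H$ acts transitively on all $r$-subsets, which forces $H G_\Delta = H = G$ by the orbit-stabiliser correspondence above. To this end I would use the $2r$-transitivity to move $\Delta \cup \Delta'$ around flexibly. Concretely, since $\Delta \neq \Delta'$, we may choose an index so that $\Delta$ and $\Delta'$ differ in a controlled way; the quantitative content of $2r$-transitivity is precisely that $G$ can realise \emph{any} injection from the $2r$ (or fewer) relevant points to $S$. The plan is to show that the $H$-orbit of $\Delta$ already contains every $r$-subset: given an arbitrary target $r$-subset $\Delta''$, I produce $g \in G$ with $g(\Delta) = \Delta$ and $g(\Delta') = \Delta''$ whenever $|\Delta \cap \Delta''|$ is compatible, using $2r$-transitivity to prescribe images of the at most $2r$ points in $\Delta \cup \Delta'$; then $g \in G_\Delta \subseteq H$ and $gh \in H$ sends $\Delta$ to $\Delta''$.

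The hard part will be the bookkeeping that lets a single transposition-type move $h$, together with all of $G_\Delta$, generate enough moves to reach an arbitrary $r$-subset. The clean way is a standard primitivity argument: the action of $G$ on $r$-subsets is \emph{primitive} (indeed $2$-transitive on the points implies the block system on $r$-subsets is trivial when $|S| > 2r$), and a maximal subgroup of $G$ is exactly a point-stabiliser of a primitive action. So the real work is verifying primitivity of the induced $G$-action on $r$-subsets under the hypotheses $|S| > 2r$ and $2r$-transitivity, and then invoking the classical fact that a group acts primitively on $G/K$ if and only if $K$ is a maximal subgroup. I expect the combinatorial verification that the induced action has no nontrivial block system — which is where the inequality $|S| > 2r$ and the \emph{doubled} transitivity degree are both genuinely needed — to be the crux; once primitivity is established, maximality of $G_\Delta$ is immediate.
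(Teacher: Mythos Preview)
Your plan correctly recognises that maximality of $G_\Delta$ is equivalent to primitivity of the $G$-action on $r$-subsets, but it then defers precisely the content of the lemma: you say the ``combinatorial verification'' of primitivity is ``the crux'' and leave it undone. That verification \emph{is} the proof; invoking primitivity as a ``standard argument'' without supplying it is circular. Your more concrete first approach --- reaching every $\Delta''$ with $|\Delta \cap \Delta''| = |\Delta \cap \Delta'|$ via $gh$ with $g \in G_\Delta$ --- is correct as far as it goes, but you stop exactly at the hard step of bridging between different intersection sizes. Worse, your parenthetical claim that $2$-transitivity on $S$ already forces primitivity on $r$-subsets when $|S| > 2r$ is false: $\PGL(2,5) \cong S_5$ acts (sharply) $3$-transitively on the six points of $\PP^1(\FF_5)$, yet its action on the fifteen $2$-subsets is imprimitive, since the setwise stabiliser of a pair has order $8$ and is a Sylow $2$-subgroup of $S_5$, hence sits properly inside a copy of $S_4$. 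The full $2r$-transitivity is genuinely needed.

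The paper carries out the missing step directly. Given $a \in G \setminus G_\Delta$, write $\Delta = \{x_1,\dots,x_r\}$ with $a(x_1),\dots,a(x_k) \notin \Delta$ and $a(x_{k+1}),\dots,a(x_r)\in\Delta$. Using $2r$-transitivity together with $|S| > 2r$, one finds $g \in G$ fixing $x_1,\dots,x_r,a(x_2),\dots,a(x_k)$ but not $a(x_1)$; then $g \in G_\Delta$ and $f := a^{-1}ga \in \langle G_\Delta, a\rangle$ fixes $x_2,\dots,x_r$ while sending $x_1$ outside $\Delta$. Conjugates of this single ``one-point-out'' element $f$ by elements of $G_\Delta$ (again via $2r$-transitivity) then allow one to match any $h \in G\setminus G_\Delta$ on $\Delta$ one coordinate at a time, giving $\langle G_\Delta, a\rangle = G$. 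The idea your proposal is missing is precisely this construction of $f$.
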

\begin{proof}Since $G_{\Delta}$ is not trivial and not equal to $G$ (because of $r$-transitivity),  it suffices to take $a\in G\setminus G_\Delta$ and to show that $a$ and $G_{\Delta}$ generate $G$. We can write $\Delta=\{x_1,\dots,x_r\}$, with $a(x_1)$, $\dots$, $a(x_k)\notin \Delta$ and $a(x_{k+1}),$ $\dots$, $a(x_r)\in \Delta$, where $1\le k\le r$. The hypotheses yield the existence of $g\in G$ that fixes $x_1,\dots,x_r,a(x_2),\dots,a(x_{k})$, and does not fix $a(x_1)$. Then, $g\in G_\Delta$ and $f=a^{-1}ga$ fixes $x_2,\dots,x_r$ but $f(x_1)\notin\Delta$. 
	
	It remains to see that any $h\in G\setminus G_\Delta$ is generated by $f$ and $G_\Delta$. We write  $\Delta=\{z_1,\dots,z_r\}$, with $h(z_1)$, $\dots$, $h(z_j)\notin \Delta$ and $h(z_{j+1}),$ $\dots$, $h(z_r)\in \Delta$, where $1\le j\le r$. Replacing $h$ with its composition with an element of $G_\Delta$ we can assume that $h(z_i)=z_i$ for $i=j+1,\dots,r$.	For $i=1,\dots,j$, we choose $g_i\in G_{\Delta}$ that sends $z_i$ onto $x_1$ and sends $h(z_i)$ onto $f(x_1)$. Then, $(g_i)^{-1} f g_i$ sends $z_i$ onto $h(z_i)$ and fixes $\Delta\setminus\{z_i\}$. Composing this element with an element of $G_\Delta$, we find an element $f_i$, generated by $f$ and $G_\Delta$, which sends $z_i$ onto $h(z_i)$ and fixes $(\Delta \cup h(\Delta))\setminus\{z_i,h(z_i)\}$. 
	
	Since  $h^{-1}f_1\cdots f_j$ belongs to $G_{\Delta}$, this achieves the proof.
%
%	We proceed by induction on $r$.
%	Let $\varphi \in G \setminus G_\Delta$. We have to prove,
%	that $G_\Delta$ and  $\varphi$ generate $G$. 
%	
%	If $r = 1$, then $\varphi(x) \neq x$, where $\Delta = \{ x \}$. Let 
%	$\psi \in G \setminus G_{\Delta}$. By $2$-transitivity, there exists
%	$\varepsilon \in G_{\Delta}$, such that 
%	$\varepsilon(\psi(x)) = \varphi(x)$. We then have
%	$\varphi^{-1} \circ \varepsilon \circ \psi \in G_{\Delta}$.
%	
%	Now, assume $r > 1$ and let $\Delta = \{ x_1, \ldots, x_r \}$.
%	We can assume that $\varphi(x_1) \notin \Delta$. By induction hypothesis,
%	$G_{\Delta'}$ and $\varphi$ generate $G$, where
%	$\Delta' = \{x_1, \ldots, x_{r-1} \}$. Thus, it is enough to show that
%	$G_{\Delta'} \subseteq \left< G_{\Delta}, \varphi \right>$.
%	Let $\psi \in G_{\Delta'} \setminus G_{\Delta}$.
%	By $r$-transitivity, there exists $\varepsilon \in G_{\Delta}$, such that 
%	$\varepsilon$ interchanges $x_1$ and $x_r$. By $2r$-transitivity, and since $\lvert S\rvert>2r$, 
%	there is a 
%	$\delta \in G_{\Delta}$, such that 
%	$\delta(\varphi(x_1)) \notin \varphi(\Delta)$
%	and $\delta |_{\varphi(\Delta \setminus \{x_1\}) \cup \Delta} = id$
%	(note that $\varphi(x_1) \notin \Delta$). It follows that 
%	$g := \varepsilon^{-1} \circ \varphi^{-1} \circ \delta \circ \varphi \circ \varepsilon
%	\in G_{\Delta'}$ and $g(x_r) \notin \Delta$.
%	Now, take any $\eta \in G_{\Delta}$, such that 
%	$\eta(\psi(x_r)) = g(x_r)$ (by $(r+1)$-transitivity). Hence, 
%	$g^{-1} \circ \eta \circ \psi \in G_{\Delta}$ and we are done.
\end{proof}

\begin{cor}\label{CorInfMax}
Assume that the ground field $\K$ is not a finite field of characteristic $2$ and let $\Delta\subset \A^n(\K)$ be a finite proper non-empty set, with $n\ge 2$. Then, $\Aut(\A^n,\Delta)$ is a maximal subgroup of $\Aut(\A^n)$.
\end{cor}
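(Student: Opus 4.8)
The plan is to reduce Corollary~\ref{CorInfMax} to the abstract maximality criterion of Lemma~\ref{lem:G}, applied with the group $G=\Aut(\A^n)$ acting on the set $S=\A^n(\k)$ of $\k$-rational points. If $\Delta$ is a finite proper non-empty subset of size $r\ge 1$, then $G_\Delta=\Aut(\A^n,\Delta)$ in the notation of the lemma, so it suffices to verify the two hypotheses: that $\lvert S\rvert>2r$, and that $\Aut(\A^n)$ acts $2r$-transitively on $\A^n(\k)$. The first is immediate, since under the stated hypotheses on $\k$ the set $\A^n(\k)$ is infinite (any field that is not finite of characteristic $2$ has infinitely many $\k$-points in $\A^n$ for $n\ge 1$), hence certainly larger than the finite number $2r$.

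The heart of the matter is therefore establishing that $\Aut(\A^n)$ acts $m$-transitively on $\A^n(\k)$ for every $m$, in particular for $m=2r$. First I would recall that the affine group $\Aff(\A^n)=\GL(n,\k)\ltimes \k^n$ already acts $1$-transitively (indeed, transitively on ordered bases), so the issue is purely to separate several distinct prescribed points and move them to several distinct prescribed target points. The standard device here is to use triangular (de Jonqui\`eres-type) automorphisms: given $m$ distinct points $p_1,\dots,p_m\in\A^n(\k)$, one can choose a linear projection $\A^n\to\A^1$, $p\mapsto \ell(p)$, whose values $\ell(p_1),\dots,\ell(p_m)$ are pairwise distinct (possible precisely because $\k$ is large enough — this is where the exclusion of small finite fields enters), and then construct shear automorphisms $(x_1,\dots,x_n)\mapsto(x_1+P(\ell),\dots)$ with $P\in\k[t]$ interpolating arbitrary prescribed increments at the distinct scalars $\ell(p_i)$. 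Composing such shears in each coordinate with affine maps realises any prescribed assignment $p_i\mapsto q_i$ of $m$ distinct points to $m$ distinct points, giving $m$-transitivity.

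The main obstacle I expect is the transitivity step itself, and specifically checking that the cardinality of $\k$ is large enough to guarantee both a separating linear form and polynomials taking prescribed values at the relevant scalars. Concretely, to interpolate $m=2r$ prescribed values at $2r$ distinct points of $\A^1(\k)$ by a single polynomial in $\k[t]$ one needs $\lvert\k\rvert\ge 2r$, and to find a linear form separating $m$ points in general position one needs $\k$ large relative to the configuration; since $r$ is fixed but arbitrary, the cleanest route is to prove full $m$-transitivity for all $m$ assuming $\k$ infinite, and to handle finite fields separately. For finite $\k$ of odd characteristic (or $\lvert\k\rvert$ large enough that $2r<\lvert\k\rvert$) the interpolation and separation arguments still go through by Lagrange interpolation over $\k$, whereas the case $\k=\FF_2$ in characteristic $2$ is genuinely excluded because too few points are available to separate a configuration. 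Thus I would organise the proof as: establish $m$-transitivity of $\Aut(\A^n)$ on $\A^n(\k)$ under the hypothesis that $\k$ is not finite of characteristic $2$, verify $\lvert\A^n(\k)\rvert>2r$, and then invoke Lemma~\ref{lem:G} to conclude that $\Aut(\A^n,\Delta)$ is maximal.
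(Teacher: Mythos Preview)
Your overall strategy---reduce to Lemma~\ref{lem:G} via $m$-transitivity of $\Aut(\A^n)$ on $\A^n(\k)$---matches the paper's, and your shear-plus-interpolation argument is essentially what the paper does for infinite $\k$. The gap is in the finite-field case. Your claim that $\A^n(\k)$ is infinite whenever $\k$ is not a finite field of characteristic $2$ is simply false: the hypothesis still allows $\k=\FF_q$ for any odd prime power $q$, and then $\lvert\A^n(\k)\rvert=q^n$ is finite.

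Two things then go wrong. First, the condition $\lvert S\rvert>2r$ of Lemma~\ref{lem:G} can fail when $r>\tfrac{1}{2}\lvert\A^n(\k)\rvert$; the paper fixes this by observing that $\Aut(\A^n,\Delta)=\Aut(\A^n,\A^n(\k)\setminus\Delta)$ and applying the lemma to whichever of $\Delta$ or its complement is smaller. Second, your interpolation/separation argument does not yield $2r$-transitivity for arbitrary $r$ over a fixed finite field: once $2r$ is comparable to $\lvert\A^n(\k)\rvert$ there are not enough elements in $\k$ to separate the points by any linear (or polynomial) form, so the triangular method stalls. The paper does not attempt to push interpolation here; it instead invokes Maubach's theorem \cite{Mau01}, which says that for finite $\k$ of characteristic $>2$ the image of $\Aut(\A^n)$ in the permutation group of $\A^n(\k)$ is the full symmetric group, hence acts $m$-transitively for every admissible $m$. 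Without this input (and the complement trick) your argument does not cover, e.g., $\k=\FF_3$, $n=2$, $\lvert\Delta\rvert=5$.
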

\begin{proof}
If $\K$ is infinite, we use Lemma~\ref{lem:G} and the fact that $\Aut(\AA^2)$ acts $m$-transitively on $\AA^2(\K)$ for every $m \geq 1$, which can be seen using the subgroup
\[
	\{(x_1,\dots,x_n)\mapsto (x_1+p(x_2,\dots,x_n),x_2,\dots,x_n)\mid p\in \K[x_2,\dots,x_n]\} \, ,
\] 
and  permutations of coordinates.

If $\K$ is a finite field of characteristic $>2$, the group $\Aut(\AA^2)$ acts $m$-transitively on $\AA^2(\K)$ for each $m$ by \cite{Mau01}; we can then apply Lemma~\ref{lem:G} to $\Delta$ or its complement.
	\end{proof}
\begin{cor}\label{CorFinMax}
Assume that the ground field $\K$ is a finite field of characteristic $2$ and let $\Delta\subset \A^n(\K)$ be a finite proper non-empty set, with $n\ge 2$.

Then, the group $\Aut(\A^n,\Delta)$ is a maximal subgroup of $\Aut(\A^n)$ if and only if  $\lvert \Delta\rvert\not=\frac{1}{2}\lvert \A^n(\K)\rvert $.
\end{cor}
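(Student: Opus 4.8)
The plan is to reduce Corollary~\ref{CorFinMax} to Lemma~\ref{lem:G} exactly as in the proof of Corollary~\ref{CorInfMax}, but to be more careful about the transitivity hypothesis, which is the whole reason the characteristic-$2$ finite-field case is special. First I would recall what is known about the transitivity of $\Aut(\AA^n)$ on $\A^n(\k)$ for $\k$ a finite field. By \cite{Mau01}, when $\char(\k)>2$ the group acts $m$-transitively for every $m$, which is what let the infinite and odd-characteristic cases go through uniformly. In characteristic $2$ over a finite field, the point is that $\Aut(\AA^n)$ does \emph{not} act fully multiply transitively: there is an obstruction. The natural candidate for the obstruction is a parity/permutation-sign invariant. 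Indeed, every automorphism of $\AA^n$ induces a permutation of the finite set $\A^n(\k)$, and the key fact (this is where I expect to lean on \cite{Mau01} again, now in its sharp form for $\char=2$) is that in characteristic $2$ over $\FF_q$ this induced permutation is always \emph{even}, so $\Aut(\AA^n)$ lands in the alternating group $\mathrm{A}(\A^n(\k))$ rather than the full symmetric group. Conversely, the image is exactly the alternating group, so $\Aut(\AA^n)$ acts as highly transitively as the alternating group allows, namely $(\lvert\A^n(\k)\rvert-2)$-transitively but not $(\lvert\A^n(\k)\rvert)$-transitively.

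Granting that the image of $\Aut(\AA^n)\to \mathrm{Sym}(\A^n(\k))$ is precisely the alternating group $\mathrm{A}(\A^n(\k))$, the corollary becomes a purely group-theoretic statement about when a point-stabilizer $\mathrm{A}(S)_\Delta$ is maximal in $\mathrm{A}(S)$, where $S=\A^n(\k)$ and $N:=\lvert S\rvert=q^n$ is even (as $q$ is a power of $2$). Write $r=\lvert\Delta\rvert$. The setwise stabilizer $G_\Delta$ in the alternating group is a Young-type subgroup $(\mathrm{Sym}(\Delta)\times\mathrm{Sym}(S\setminus\Delta))\cap\mathrm{A}(S)$. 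The plan is to apply Lemma~\ref{lem:G} with $G=\mathrm{A}(S)$: the alternating group $\mathrm{A}(S)$ acts $2r$-transitively on $S$ precisely when $2r\le N-2$, and as long as $\lvert S\rvert>2r$ the lemma yields maximality of $G_\Delta$. Replacing $\Delta$ by its complement $S\setminus\Delta$ (which has the same setwise stabilizer in $\mathrm{A}(S)$) lets us always assume $r\le N/2$, so the only borderline values to examine are $r$ close to $N/2$.

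The main obstacle, and the source of the exact condition $\lvert\Delta\rvert\ne\frac12\lvert\A^n(\k)\rvert$, is the single boundary case $r=N/2$: here $\Delta$ and its complement have the same size, so the partition $\{\Delta,S\setminus\Delta\}$ is preserved by an extra involution swapping the two halves, and the setwise stabilizer $G_\Delta$ is strictly contained in the larger imprimitive (wreath-type) subgroup $(\mathrm{Sym}(\Delta)\wr\mathbb Z/2\mathbb Z)\cap\mathrm{A}(S)$ stabilizing the \emph{unordered} pair. This intermediate subgroup shows $G_\Delta$ is \emph{not} maximal when $r=N/2$. The plan for the forward direction is therefore: when $r\ne N/2$ (after reducing to $r<N/2$ by complementation), verify that $2r\le N-2$ so that $\mathrm{A}(S)$ is $2r$-transitive and Lemma~\ref{lem:G} applies directly, giving maximality; and when $r=N/2$ exhibit the intermediate subgroup above to show non-maximality. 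I would need to double-check one edge issue: that $r<N/2$ together with $N$ even indeed forces $2r\le N-2$ (it does, since $2r\le N-1$ and $2r$ is even while $N-1$ is odd), so the transitivity degree of the alternating group is genuinely high enough to invoke the lemma. The delicate part throughout is confirming the precise transitivity degree of $\mathrm{A}(S)$ and that $\Aut(\AA^n)$ surjects onto $\mathrm{A}(S)$ rather than onto something smaller; once those two facts are in hand, the rest is the same complementation-plus-Lemma~\ref{lem:G} argument used for Corollary~\ref{CorInfMax}, together with the explicit wreath-product obstruction at the midpoint.
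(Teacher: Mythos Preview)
Your proposal is correct and follows essentially the same route as the paper: both invoke \cite{Mau01} to identify the image of $\Aut(\A^n)$ in $\mathrm{Sym}(\A^n(\k))$ with the alternating group, use its $(\lvert\A^n(\k)\rvert-2)$-transitivity together with Lemma~\ref{lem:G} and complementation to handle $\lvert\Delta\rvert\ne\frac12\lvert\A^n(\k)\rvert$, and in the midpoint case exhibit an intermediate subgroup. Your intermediate subgroup (the imprimitive stabilizer of the unordered pair $\{\Delta,S\setminus\Delta\}$) is exactly the image of the group $H$ the paper builds by adjoining a single swap $\varphi$ to $\Aut(\A^n,\Delta)$; the only cosmetic difference is that you pass explicitly to the quotient alternating group via the correspondence theorem, while the paper stays in $\Aut(\A^n)$.
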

\begin{proof}Let us write $\lvert \A^n(\K)\rvert=2m$ for some integer $m$.

 If $\lvert \Delta\rvert<m$, the fact that $\Aut(\A^n,\Delta)$ is a maximal subgroup of $\Aut(\A^n)$ follows from Lemma~\ref{lem:G} and from the fact that the action of $\Aut(\A^n)$ on the $2m$ points of $\A^n(\K)$ give all even permutations (see \cite{Mau01}), and thus acts $(2m-2)$-transitively.  If $\lvert \Delta\rvert>m$, we exchange $\Delta$ with its complement.

 If $\lvert \Delta\rvert=m$, there exists an automorphism $\varphi$ of $\A^n$ exchanging $\Delta$ with its complement (by the result of \cite{Mau01} cited before). Denoting by $H$ the group generated by $\Aut(\A^n,\Delta)$ and $\varphi$, we have $\Aut(\A^n,\Delta)\subsetneq H\subsetneq\Aut(\A^n)$.
\end{proof}
\begin{rem}
Corollaries~\ref{CorInfMax} and~\ref{CorFinMax} raise the question of describing  all maximal subgroups of $\Aut(\A^n)$ in general.
\end{rem}

\subsection{Generalisation to higher dimension}
\label{GeneralizationsHigherDim.subsec}
Let us show with an example 
that in higher dimension, the hypersurfaces 
$X\subset \A^n$ such that $\Aut(\A^n,X)$ is not an algebraic group are not as simple as in dimension $n = 2$.

\begin{exa}
Let $X\subset \A^3$ {be} the hypersurface {with} equation $xy=f(z)$, for some polynomial $f\in \K[z]$. Then, $\Aut(\A^3,X)$ contains {the group}
$$\left\{(x,y,z)\mapsto \left(x,y+\frac{f(z+xq(x))-f(z)}{x},z+xq(x)\right)\mid q\in \K[x] \right\}\simeq \K[x]$$
and thus {it} is not an algebraic group.
\end{exa}

A possible generalisation of Theorem~\ref{Thm:Main} would be to show that every hypersurface $X\subset \A^3$ such that $\Aut(\A^3,X)$ is not an algebraic group admits an $\A^1$-fibration $X\to \A^1$ given by a coordinate projection.

\section{Classification of the possible group actions and the proof of Theorem~\ref{Thm:Class}}
\label{Sec:Class}
This section is devoted to the proof of Theorem~\ref{Thm:Class}, which describes more precisely the curves and groups appearing in Theorem~\ref{Thm:Main}, in the case where the ground field $\K$ is perfect, and where the curve is geometrically irreducible.
\subsection{The possibilities for $\Gamma$ and $\Aut(\A^2,\Gamma)$}
\label{PossibilitiesFor.subsec}
\begin{lem}
	\lab{LEM:AutoCurve}
	Let $\Gamma$ be an affine geometrically irreducible curve, defined over a perfect field $\K$. The group $\Aut(\Gamma)$ is an affine algebraic group. If it has positive dimension, one of the 
	following holds:
	\begin{enumerate}[$i)$]
		\item $\Gamma \simeq \AA^1$;
		\item The curve $\Gamma$ is unicuspidal with normalization $\A^1$;
		\item Over the algebraic closure $\KK$, 
			$\Gamma$ is isomorphic to $\AA^1 \setminus \{ 0 \}$.
	\end{enumerate}
\end{lem}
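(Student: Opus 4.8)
The plan is to prove Lemma~\ref{LEM:AutoCurve} in two stages: first that $\Aut(\Gamma)$ is an affine algebraic group, and then that positive dimension forces $\Gamma$ into one of the three listed shapes. For the first stage, I would normalize. Let $\nu\colon \tilde\Gamma\to\Gamma$ be the normalization; since $\k$ is perfect and $\Gamma$ is geometrically irreducible, $\tilde\Gamma$ is a smooth geometrically irreducible affine curve, and it admits a unique smooth projective model $C$ with a finite set $S=C\setminus\tilde\Gamma$ of points ``at infinity''. Every automorphism of $\Gamma$ lifts uniquely to an automorphism of $\tilde\Gamma$ by the universal property of normalization, and the latter extends to an automorphism of $C$ preserving $S$ and preserving the conductor/non-smooth locus of $\nu$. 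Thus $\Aut(\Gamma)$ embeds into the subgroup of $\Aut(C)$ stabilizing these finite sets of data. For a smooth projective curve of genus $\ge 2$ the full automorphism group is finite, so there is nothing to prove; for genus $1$ and genus $0$ the automorphism group is a well-understood algebraic group, and imposing the stabilizer conditions on the finite sets $S$ and the conductor cuts it down to a closed, hence affine algebraic, subgroup. I would record this as $\Aut(\Gamma)$ being a closed subgroup of an algebraic group acting on $C$, hence affine algebraic.

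For the second stage, assume $\dim\Aut(\Gamma)>0$ and analyze by the genus $g$ of $C$. If $g\ge 1$ the stabilizer of any nonempty finite set in $\Aut(C)$ is finite (genus $\ge 2$) or the identity component is a translation subgroup that cannot fix a nonempty $S$ pointwise while remaining positive-dimensional and also stabilizing it as a set (genus $1$); so positive dimension forces $g=0$, i.e.\ $C\simeq\PP^1$ over $\kk$. Then $\Aut(C)=\PGL_2$, and a positive-dimensional closed subgroup of $\PGL_2$ fixing the finite set $S$ setwise must be, up to conjugacy over $\kk$, either a one-dimensional group fixing a single point of $\PP^1$ (forcing $|S|=1$) or a one-dimensional group fixing two points (forcing $|S|=2$, i.e.\ $S$ is swapped or fixed). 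The case $|S|=1$ means $\tilde\Gamma\simeq\AA^1$; the case $|S|=2$ means $\tilde\Gamma\simeq\AA^1\setminus\{0\}$ over $\kk$. Finally I must descend this to $\Gamma$ itself rather than its normalization, which is where the conductor enters: the positive-dimensional group also has to preserve the fibers of $\nu$ over the singular points. If $\nu$ is an isomorphism we land exactly in case $i)$ ($\Gamma\simeq\AA^1$) or case $iii)$ ($\Gamma\simeq\AA^1\setminus\{0\}$ over $\kk$). If $\nu$ is not an isomorphism, the surviving positive-dimensional group forces $\tilde\Gamma\simeq\AA^1$ and the singularities to be concentrated at a single branch point fixed by the group, yielding the unicuspidal curve with normalization $\AA^1$ of case $ii)$.

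The main obstacle I anticipate is the descent from $\tilde\Gamma$ to $\Gamma$ through the conductor, and the bookkeeping over a non-algebraically-closed perfect field. A positive-dimensional group $G\subseteq\Aut(\tilde\Gamma)$ need not descend to a positive-dimensional subgroup of $\Aut(\Gamma)$ unless it preserves the conductor scheme and acts compatibly on each fiber $\nu^{-1}(p)$; I would argue that since $G$ is positive-dimensional and hence (over $\kk$) connected of positive dimension, it cannot permute a finite set of distinct branch points nontrivially, so it must fix each singular point of $\Gamma$ and stabilize each fiber. In the two-point case $\tilde\Gamma\simeq\AA^1\setminus\{0\}$ this rules out any further identification of the two punctures (which would drop $\tilde\Gamma$ below positive-dimensional automorphisms), leaving case $iii)$ intact; in the one-point case a single cusp is compatible and gives case $ii)$. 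The remaining delicate point is keeping the statement ``over $\kk$'' in cases $ii)$ and $iii)$ honest: the isomorphisms to $\AA^1\setminus\{0\}$ or the normalization $\AA^1$ need only hold after base change to $\kk$, which is exactly what the lemma claims, so I would verify that the structural conclusions (number of points at infinity, unicuspidality) are geometric and hence stable under the perfect base field.
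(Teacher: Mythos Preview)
Your approach is essentially the same as the paper's: normalize, embed $\Aut(\Gamma)$ into the stabilizer in $\Aut(C)$ of the finite set consisting of the points at infinity together with the preimages of the singular locus, and then use the standard trichotomy on $\Aut(C)$ by genus to force $C\simeq\PP^1$ over $\kk$ with at most two marked points. The paper phrases the bookkeeping slightly differently (writing $r$ for the number of points at infinity and $m$ for the total number of marked points, then using that the pointwise stabilizer of three points in $\PGL_2$ is trivial to get $1\le r\le m\le 2$), but the substance is identical.

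One small point you gloss over: in case $i)$ the lemma asserts $\Gamma\simeq\AA^1$ over $\k$, not merely over $\kk$. Your argument as written only identifies $C$ with $\PP^1$ over $\kk$, so it only yields $\tilde\Gamma_{\kk}\simeq\AA^1_{\kk}$; to descend this to $\k$ one needs that every form of the affine line over a perfect field is trivial, which the paper invokes explicitly (citing \cite{Rus70}). Your closing paragraph treats all three cases as ``over $\kk$'' statements, but case $i)$ is genuinely a statement over $\k$ and needs this extra input.
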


\begin{proof}

	Let $\tilde{\Gamma}$ be the normalization of $\Gamma$, which can be viewed as an 
	open subset of
	a smooth projective curve $X$, defined over~$\K$. Let
	$(X\setminus \tilde{\Gamma})(\KK)=\{x_1,\dots,x_r\}$ be its 
	complement. The points $x_1,\dots,x_r$ are not 
	necessarily all defined 
	over~$\K$ (however, the union is invariant by 
	the Galois group $\Gal(\KK/\K)$). Moreover, we denote by 
	$x_{r+1},\dots,x_m$ the $\KK$-points of $\tilde{\Gamma}$ which are send onto the 
	singular $\KK$-points of $\Gamma$. As before, not all are necessarily defined over~$\K$.
	
	This yields a natural inclusion
	\[
		\Aut(\Gamma)\subseteq  \{g\in \Aut(X) \mid g(\{x_1,\dots,x_m\})=
		\{x_1,\dots,x_m\}\},
	\] 
	and a group homomorphism $\Aut(\Gamma)\to \mathrm{Sym}_m$. 
	The kernel is of the same dimension as $\Aut(\Gamma)$. 
	
	Let us recall some classical facts on automorphisms of smooth 
	projective curves.
	If the genus of $X$ is at least $2$, then $\Aut(X)$ is finite. 
	If the genus is $1$, the subgroup of $\Aut(X)$ that fixes a point is 
	also finite. If $X$ is rational, the subgroup of automorphisms fixing three 
	points is trivial. 
	
	If the dimension of $\Aut(X)$ is positive, we obtain that $X$ is rational and that $1\le r\le m\le 2$.
	
	If $r=m=1$, then $\Gamma=\tilde{\Gamma}\simeq \A^1$ (every form of the affine line over a perfect field is trivial, see \cite{Rus70}).
	
	If $r=1$ and $m=2$, then $\tilde{\Gamma}\simeq \A^1$ and $\Gamma$ is 
	{a unicuspidal} curve.
	
	If $r=m=2$, then $\Gamma=\tilde{\Gamma}$ is smooth and isomorphic, over $\KK$, to $\A^1\setminus \{0\}$.
\end{proof}
\begin{rem}
Lemma~\ref{LEM:AutoCurve} is false over a non-perfect field, since there are non-trivial forms of the affine line and its additive group; see  \cite{Rus70} for a classification of such curves.
\end{rem}

\begin{lem}\label{Lem:Dim12}
	Assume that $\K$ is a perfect field.
	Let $\Gamma \subseteq \AA^2$ be a closed  geometrically irreducible curve and assume that
	$\Aut(\AA^2, \Gamma)$ is an algebraic group 
	of positive dimension $($over $\KK)$.  Then, $\Aut(\A^2,\Gamma)$ 
	contains a closed subgroup $G$ defined over~$\K$, which is either
	\begin{enumerate}[$i)$]
	\item
	a one-dimensional torus, i.e.\;isomorphic to 
	the one-dimensional multiplicative group $\GG_m$ over $\KK$,
	\item
	or isomorphic to the one-dimensional additive group
	$\GG_a$ over~$\K$.
	  \end{enumerate}
\end{lem}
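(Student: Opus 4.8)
The plan is to control the identity component of $\Aut(\A^2,\Gamma)$ via its action on $\Gamma$, reducing everything to the classification of $\Aut(\Gamma)$ provided by Lemma~\ref{LEM:AutoCurve}, and then to apply the structure theory of connected solvable groups over a perfect field.

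First I would reduce to case $ii)$ of Theorem~\ref{Thm:Main}. Being an algebraic group (having bounded degree) is preserved under conjugation by an automorphism of $\A^2$, and the conclusion of the lemma is likewise invariant under such conjugation, since conjugation by a $\k$-automorphism sends a $1$-dimensional torus (resp.\ a copy of $\GG_a$) defined over~$\k$ to one of the same type. Hence I may apply an automorphism of $\A^2$ and assume we are in case $i)$ or $ii)$ of Theorem~\ref{Thm:Main}. Case $i)$ is excluded: a geometrically irreducible fence is a line, whose automorphism group contains all maps $(x,y)\mapsto(x,cy+P(x))$ and so has unbounded degree, contradicting the hypothesis. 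Thus we are in case $ii)$, and the action on $\Gamma$ realises $\Aut(\A^2,\Gamma)$ as a closed $\k$-subgroup of the algebraic group $\Aut(\Gamma)$.

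Next I set $H:=\Aut(\A^2,\Gamma)^0$, the identity component; as $\k$ is perfect, $H$ is a connected linear algebraic group defined over~$\k$, of the same positive dimension. Since $\dim\Aut(\Gamma)>0$, Lemma~\ref{LEM:AutoCurve} tells us that $\Gamma$ is $\A^1$, or unicuspidal with normalisation $\A^1$, or a $\kk$-form of $\A^1\setminus\{0\}$. In each case $\Aut(\Gamma)^0$ is solvable with a maximal torus of dimension at most $1$: for $\Gamma\simeq\A^1$ one has $\Aut(\Gamma)=\GG_m\ltimes\GG_a$; in the unicuspidal case every automorphism lifts uniquely to the normalisation, giving a closed embedding $\Aut(\Gamma)\hookrightarrow\Aut(\A^1)=\GG_m\ltimes\GG_a$; and for a form of $\A^1\setminus\{0\}$ the group $\Aut(\Gamma)^0$ is a one-dimensional torus. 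Consequently $H$, being a connected closed subgroup of such a group, is solvable with a maximal torus of dimension $\le 1$.

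Finally I would invoke the structure theory of connected solvable linear algebraic groups over a perfect field: $H=T\ltimes U$, where the maximal torus $T$ and the unipotent radical $U=R_u(H)$ are both defined over~$\k$. If $T\neq 1$, then $T$ is a $1$-dimensional torus defined over~$\k$, hence isomorphic to $\GG_m$ over $\kk$, and I take $G=T$, giving case $i)$. If $T=1$, then $H=U$ is a non-trivial connected unipotent group over the perfect field $\k$, hence $\k$-split, so it contains a closed $\k$-subgroup isomorphic to $\GG_a$, which I take as $G$, giving case $ii)$. The delicate point is exactly the torus case: a non-split higher-dimensional torus over~$\k$ need not contain any $1$-dimensional $\k$-subtorus, so the argument genuinely relies on Lemma~\ref{LEM:AutoCurve} forcing the maximal torus of $H$ to be at most one-dimensional; once that is secured, the structure theorem together with the splitness of unipotent groups over perfect fields finishes the proof.
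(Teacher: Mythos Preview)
Your proof is correct and follows essentially the same route as the paper: embed $\Aut(\A^2,\Gamma)$ into $\Aut(\Gamma)$ via Theorem~\ref{Thm:Main}, invoke Lemma~\ref{LEM:AutoCurve} to constrain $\Aut(\Gamma)$, and extract a one-dimensional torus or a $\GG_a$. Your uniform appeal to the decomposition $H=T\ltimes U$ for connected solvable groups over a perfect field (together with $\k$-splitness of unipotent groups) makes explicit a step the paper handles by a terse case-by-case remark.
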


\begin{proof}
	The algebraic group $\Aut(\AA^2, \Gamma)$ is isomorphic to a closed subgroup of $\Aut(\Gamma)$ (Theorem~\ref{Thm:Main}). This gives three possibilities for $\Gamma$, according to  Lemma~\ref{LEM:AutoCurve}.
	
	$a)$ $\Gamma\simeq \A^1$, and we get
	$\Aut(\Gamma)\simeq \GG_a\rtimes \GG_m$.
	
	$b)$ $\Gamma$ is a unicuspidal curve, in which case 
	$\Aut(\Gamma)$ is a torus.
	
	$c)$ $\Gamma$ is isomorphic to $\A^1\setminus\{0\}$ over $\KK$. The  
	connected component of the identity $\Aut(\AA^2, \Gamma)^0$ is a 
	connected algebraic group, defined over $\K$, which is a torus.
\end{proof}

The two possibilities given by Lemma~\ref{Lem:Dim12} are described respectively in $\S\ref{SecTorus}$ and $\S\ref{SecGa}$. We will in particular show that the second case does not occur.
\subsection{Torus actions}
\label{SecTorus}
\begin{lem}\label{Lem:GL2}
	Assume that $\K$ is a perfect field.
	Let $\Gamma \subseteq \AA^2$ be a closed geometrically irreducible curve
	and assume that
	$\Aut(\AA^2, \Gamma)$ is an algebraic group that 
	contains a closed one-dimensional torus $T$.
	Then there exists an automorphism $\varphi \colon \AA^2 \to \AA^2$ 
	such that $\varphi \circ T \circ \varphi^{-1} \subseteq \GL(2,\K)$.
\end{lem}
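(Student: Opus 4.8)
The plan is to exploit the Levi-type decompositions of $\Aff(\A^2)$ and $\J_n$ together with the linear reductivity of the torus $T$. First I would invoke Theorem~\ref{Thm:Main}: since $\Aut(\A^2,\Gamma)$ is assumed to be an algebraic group it cannot be the group of case $i)$ (a fence), so after replacing $\Gamma$ by its image under a suitable automorphism of $\A^2$ we may assume that $\Aut(\A^2,\Gamma)$, and hence $T$, is contained in $\Aff(\A^2)$ or in $\J_n$ for some $n$. By Lemma~\ref{LEM:AutoNatCompl} each of these groups is a semidirect product $L\ltimes U$ with $L$ reductive and $U$ its unipotent radical: for $\Aff(\A^2)\simeq \GL(2,\k)\ltimes \k^2$ one takes $L=\GL(2,\k)$ and $U$ the group of translations, while for $\J_n\simeq (\k^{*})^2\ltimes(\k\ltimes\k^{n+1})$ one takes $L=\{(x,y)\mapsto(ax,by)\}$, the diagonal torus, and $U=\{(x,y)\mapsto(x+P(y),y+c)\}$. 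In both cases $L\subseteq \GL(2,\k)$, so it suffices to conjugate $T$ into $L$ by an automorphism of $\A^2$ defined over~$\k$.

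The key point is that $T$, being a torus (a group of multiplicative type), is linearly reductive over~$\k$ in every characteristic; hence $H^1(T,V)=0$ for every rational $T$-module $V$ over~$\k$. I would use this to show that $T$ is conjugate into $L$ by a $\k$-point of $U$. Indeed, the composite $T\hookrightarrow L\ltimes U\to L$ is injective, since $T\cap U=1$ (a torus meets a unipotent group trivially), so $T$ is a complement to $U$ inside $T\cdot U$; the vanishing of the cohomology then forces all such complements to be $U(\k)$-conjugate, and therefore some $\varphi\in U(\k)$ satisfies $\varphi T\varphi^{-1}\subseteq L\subseteq \GL(2,\k)$, as desired.

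I would then make the two cases explicit. In the affine case the translation part of the homomorphism $T\to\Aff(\A^2)=\GL(2,\k)\ltimes\k^2$ is a $1$-cocycle valued in the rational $T$-module $\k^2$, hence a coboundary $t\mapsto \rho(t)v-v$ for some $v\in\k^2$, and translation by $v$ conjugates $T$ into $\GL(2,\k)$. In the Jonqui\`eres case $U$ is nonabelian, so I would filter it by the normal subgroup $\{(x,y)\mapsto(x+P(y),y)\}\cong\k^{n+1}$ with quotient $\{(x,y)\mapsto(x,y+c)\}\cong\GG_a$ and apply the abelian vanishing twice: first a translation $y\mapsto y+c_0$ moves the image of $T$ in $\Aff(\A^1)$ so that it fixes the origin (removing the $c$-part), after which $T\subseteq\{(x,y)\mapsto(ax+P(y),by)\}$; then a conjugation by $(x,y)\mapsto(x+Q(y),y)$ removes the remaining term $t\mapsto P_t$, whose class lives in $H^1(T,\k[y]_{\le n})=0$.

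The main obstacle is the $\k$-rationality of the conjugating automorphism: over a non-closed field one must know not only that the relevant cohomology vanishes abstractly, but that the vanishing is witnessed by a $\k$-point of $U$ and that the (possibly non-split) torus $T$ is still linearly reductive over~$\k$. This is precisely where the hypothesis that $\k$ is perfect enters, guaranteeing that $T$ is smooth of multiplicative type and that the unipotent radicals above are $\k$-split; under these conditions $H^1(T,-)$ behaves as over $\kk$ and the complements are genuinely conjugate over~$\k$, so $\varphi$ can be chosen in $\Aut(\A^2)$ defined over~$\k$.
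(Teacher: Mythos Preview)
Your argument is correct but takes a genuinely different route from the paper's. The paper proceeds geometrically: having placed $T$ inside $\Aut(\FF_n,E_n\cup L_{\FF_n})$ or $\Aut(\p^2,L_{\p^2})$ via Corollary~\ref{COR:Main}, it studies the induced $T$-action on the boundary line $L_X$, finds a $T$-fixed $\k$-point on $L_{\FF_n}\setminus E_n$ (using that $E_n\cap L_{\FF_n}$ is already one of the two fixed points of a nontrivial torus action on $\p^1$), performs elementary links $\FF_n\dasharrow\cdots\dasharrow\FF_1\to\p^2$ centred at successive such fixed points to carry $T$ into $\Aff(\A^2)$, and finally locates a $T$-fixed $\k$-point in $\A^2$---this last step is where perfectness is actually used, via the triviality of forms of $\A^1$ \cite{Rus70}---and translates it to the origin. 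You instead stay in $\Aff(\A^2)$ or $\J_n$ and exploit the Levi-type splitting $L\ltimes U$ together with the linear reductivity of $T$, killing the $U$-component of $T\hookrightarrow L\ltimes U$ by successive applications of $H^1(T,V)=0$ on the vector-group subquotients of $U$. Your method avoids the birational machinery of \S\ref{Sec:RemBir} entirely once Theorem~\ref{Thm:Main} is available, and in fact goes through over an arbitrary field: neither the linear reductivity of a torus nor the $\k$-splitness of the explicit unipotent radicals $\k^2$ and $\k\ltimes\k^{n+1}$ requires perfectness, so your closing paragraph is overly cautious about where that hypothesis enters. The paper's approach, by contrast, stays within the geometric framework of elementary links used throughout and makes the conjugating automorphism concrete as a sequence of such links followed by a single translation.
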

\begin{proof}
	By Corollary \ref{COR:Main} we can assume that either
	$\Aut(\AA^2, \Gamma) \subseteq \Aut(\FF_n, E_n \cup L_{\FF_n})$ or 
	$\Aut(\AA^2, \Gamma) \subseteq \Aut(\PP^2, L_{\PP^2})$.

	Moreover, we can assume that $T$ is defined over~$\K$ and $T$ is isomorphic to $\mathbb{G}_m$ over $\KK$ (Lemma~\ref{Lem:Dim12}).
	
	Assume $\Aut(\AA^2, \Gamma) \subseteq \Aut(\FF_n, E_n \cup L_{\FF_n})$. 
	Thus $T$ acts on $L_{\FF_n}$. If this action is trivial, then there 
	exists a fixed point on $L_{\FF_n} \setminus E_n$ 
	that is defined over~$\K$. Otherwise,
	$T$ has exactly two fixed points on $L_{\FF_n} \simeq \PP^1$ defined 
	over $\KK$. As $L_{\FF_n} \cap E_n$ is a fixed point
	of the $T$-action that is defined over~$\K$, there is a fixed point on
	$L_{\FF_n} \setminus E_n$ that is defined over~$\K$. Thus by performing
	elementary links, we can assume that 
	$T \subseteq \Aut(\FF_1, E_1 \cup L_{\FF_1})$. But $T$  preserves
	the exceptional divisor $E_1$ and therefore 
	$\varphi \circ T \circ \varphi^{-1} \subseteq \Aut(\PP^2, L_{\PP^2})$, 
	where $\varphi \colon \FF_1 \to \PP^2$ denotes a link of type III.
	
	Thus we are left over with the case $T \subseteq \Aut(\PP^2, L_{\PP^2})$.
	It is enough to show that the induced action of $T$ on 
	$\AA^2 = \PP^2Ê\setminus L_{\PP^2}$ has a fixed point
	that is defined over~$\K$. The set of points of $\AA^2(\KK)$ that are fixed by $T(\KK)$ consists either of one affine line or one point. This set is invariant by the action of the Galois group 
$\textrm{Gal}(\KK / \K)$, and is thus defined over $\K$. Using again the fact that every form of the affine line over a perfect field is trivial (see \cite{Rus70}), we find a $\K$-point of $\A^2$ fixed by $T$.
\end{proof}

\begin{prop}\label{Prop:Torus}
	Assume that $\K$ is perfect. Let $\Gamma \subseteq \AA^2$ be a closed geometrically irreducible curve and assume that
	$\Aut(\AA^2, \Gamma)$ contains a closed one-dimensional torus. After conjugation by an automorphism of $\AA^2$, the curve $\Gamma$ 
	
	\begin{enumerate}[$i)$]
		\item has equation $x=0$, or
		\item  has equation $x^b= \lambda y^a$ where $a, b>1$ are 
			coprime integers and $\lambda \in \K^\ast$, or
		\item  has equation $x^b y^a = \lambda$ where $a, b \geq 1$ are 
			coprime integers and $\lambda\in \K^{*}$, or
		\item
			has equation  $\lambda x^2+\nu y^2=1$, where 
			$\lambda,\nu\in \K^{*}$, $-\lambda\nu$ is not a square in $\K$ and $\char(\K)\not=2$, or
		\item
			has equation  $ x^2+\mu xy+y^2=1$, where 
			$\mu \in \K^{*}$, $x^2+\mu x+1$ has no root in $\K$ and $\char(\K)=2$.
	\end{enumerate}
	
	Moreover, the group $\Aut(\AA^2,\Gamma)$ is respectively equal to 
	\begin{enumerate}[$i)$]
	\item
	$\{(x,y)\mapsto (ax,by+P(x))\mid a,b\in \K^{*}, P\in \K[x]\}\simeq \K[x]\rtimes 
	{(\K^{*})^2}$;
	\item
	$\{(x,y)\mapsto (t^ax,t^by)\ |\ t\in \K^{*}\}\simeq \K^{*}$;
	\item
	$\{(x,y)\mapsto (t^ax,t^{-b}y)\ |\ t\in \K^{*}\}\simeq \K^{*}$ if $(a,b)\not=(1,1)$; \\
	$\{(x,y)\mapsto (tx,t^{-1}y)\ |\ t\in \K^{*}\}\cup \{(x,y)\mapsto (ty,t^{-1}x)\ |\ t\in \K^{*}\} \simeq \K^{*}\rtimes \mathbb{Z}/2\mathbb{Z}$ if $(a,b)=(1,1)$;
	\item $T\rtimes \langle \sigma\rangle\simeq 
	T\rtimes \mathbb{Z}/2\mathbb{Z}$, where $T,\{\sigma\}\subset \GL(2,\K)$ 
	are given by
	$$T=\left.\left\{\left(\begin{array}{cc} a& -\nu  b\\ \lambda b&  a\end{array}\right)\right| a^2+\lambda\nu b^2=1  \right\}, \ \sigma=\left(\begin{array}{cc} 1& 0\\ 0& -1\end{array}\right).$$
	Moreover $T$ is a torus, which is not $\K$-split.
	\item
	$T\rtimes \langle \sigma\rangle\simeq T\rtimes \mathbb{Z}/2\mathbb{Z}$, where $T,\{\sigma\}\subset \GL(2,\K)$ are given by
	$$T=\left.\left\{\left(\begin{array}{cc} a&  b\\  b& a+\mu b\end{array}\right)\right| a^2+\mu ab+b^2=1 \right\}, \ \sigma=\left(\begin{array}{cc} 1& \mu \\ 0& 1\end{array}\right).$$
	Moreover $T$ is a torus which is isomorphic to $\Gamma$ $($and which is not $\K$-split$)$.
	\end{enumerate}
\end{prop}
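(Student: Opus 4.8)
The plan is to leverage Lemma~\ref{Lem:GL2}, which reduces everything to the case where the one-dimensional torus $T$ acts linearly, i.e. $T \subseteq \GL(2,\k)$. Once $T$ is diagonalizable over $\kk$, I would choose coordinates (over $\kk$) in which $T$ acts as $(x,y) \mapsto (t^a x, t^b y)$ for some coprime integers $a, b$ (not both zero, after normalizing; the weights can be taken coprime by absorbing a common factor into the parameter $t$). The curve $\Gamma$ must then be $T$-invariant, so its defining polynomial $F(x,y)$ is semi-invariant: each monomial $x^i y^j$ appearing in $F$ must lie in a single weight-eigenspace, meaning $ai + bj$ is constant on the support of $F$. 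This is the crucial structural constraint, and it forces $\Gamma$ to be defined by a \emph{weighted-homogeneous} equation. Enumerating the possible sign patterns of $(a,b)$ and the geometry of the support then yields the five normal forms: a coordinate line when one weight vanishes (case $i$), the cuspidal curves $x^b = \lambda y^a$ when $a,b$ have the same sign (case $ii$), the hyperbola-type curves $x^b y^a = \lambda$ when they have opposite signs (case $iii$), and—when $T$ is a \emph{non-split} torus, hence not diagonalizable over $\k$ but only over $\kk$—the anisotropic conics of cases $iv$ and $v$, distinguished by whether $\char(\k) = 2$.

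\textbf{Key steps in order.} First I would split according to whether $T$ is $\k$-split. In the split case, diagonalize over $\k$ itself and run the semi-invariance argument above, using geometric irreducibility of $\Gamma$ to rule out reducible supports and to pin down $\lambda \in \k^\ast$. The coprimality and the constraints $a,b > 1$ in case $ii)$ (versus $a,b \geq 1$ in case $iii)$) come from requiring $\Gamma$ to be irreducible and to genuinely pass through, or avoid, the coordinate axes; I would check these boundary conditions explicitly. Second, for the non-split case, the torus $T$ has no $\k$-rational eigenvectors, so it cannot be diagonalized over $\k$; instead it preserves an anisotropic quadratic form, and the invariant curve must be a level set of that form. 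Writing the form as $\lambda x^2 + \nu y^2$ in odd characteristic (with $-\lambda\nu$ a non-square, guaranteeing anisotropy and non-splitness) and as $x^2 + \mu x y + y^2$ in characteristic $2$ (with $x^2 + \mu x + 1$ irreducible over $\k$) gives the normal forms. Third, having fixed $\Gamma$ in each case, I would compute $\Aut(\A^2,\Gamma)$ by determining all automorphisms of $\A^2$ preserving the given equation; by Theorem~\ref{Thm:Main} these are forced into $\Aff(\A^2)$ or some $\J_n$, which makes the computation a finite linear-algebra check. The extra $\Z/2\Z$ factors (the swap $(x,y)\mapsto(y,x)$ when $(a,b)=(1,1)$ in case $iii$, and the reflections $\sigma$ in cases $iv$, $v$) arise as the outer symmetries of the curve exchanging its two ends or branches.

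\textbf{Main obstacle.} I expect the hardest part to be the non-split torus cases $iv)$ and $v)$, and in particular the characteristic-$2$ case $v)$. Over a field where the torus does not split, I cannot simply diagonalize and read off weights; instead I must argue that the $T$-invariant curves are exactly the fibers of the norm form of the quadratic extension splitting $T$, and then verify that the full automorphism group is the orthogonal-type group $T \rtimes \langle\sigma\rangle$ rather than something larger. Establishing that $T$ is genuinely a torus (and that it is \emph{not} $\k$-split), and in case $v)$ that $T$ is actually isomorphic to $\Gamma$ as a variety, requires care with the Galois descent and with the degenerate behavior of quadratic forms in characteristic $2$ (where the bilinear form attached to $x^2 + \mu xy + y^2$ is alternating and the usual orthogonal-group description breaks down). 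I would handle this by passing to $\kk$, identifying everything with the split model $\GG_m$ acting as $(x,y)\mapsto(tx,t^{-1}y)$ on the affine conic, and then descending the automorphism computation back to $\k$ using the cocycle that twists the split torus into $T$. The remaining computations—verifying the explicit matrix group generators and confirming that no $P(x)$ or higher-degree Jonquières terms can appear once $\Gamma$ is a genuine conic or cuspidal curve—are routine and I would relegate them to direct checks.
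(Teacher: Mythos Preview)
Your organisation of the classification via the $\k$-split/non-split dichotomy is valid and slightly different from the paper's: the paper diagonalises $T$ uniformly over $\kk$ and then descends using the Galois action on the points at infinity (including a small characteristic-$p$ trick in case $ii$ to show a certain ratio of coefficients lies in $\k$), whereas you front-load the descent by splitting on the torus type. Both routes reach the five normal forms, and your version is arguably more conceptual.

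There is, however, a genuine gap in your treatment of the ``Moreover'' part, specifically case $iii)$ with $(a,b)\neq(1,1)$. You write that Theorem~\ref{Thm:Main} forces $\Aut(\A^2,\Gamma)$ into $\Aff(\A^2)$ or some $\J_n$, reducing the computation to linear algebra. But Theorem~\ref{Thm:Main} only asserts this \emph{after a further automorphism of $\A^2$}, which need not be the identity in your normal-form coordinates; you therefore cannot simply compute the affine or Jonqui\`eres stabiliser of $x^b y^a=\lambda$ and stop. What must actually be excluded is an automorphism of $\A^2$ preserving $\Gamma$ that swaps the two places at infinity $(1{:}0{:}0)$ and $(0{:}1{:}0)$. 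Such a map would necessarily be a non-affine birational self-map of $\p^2$, so no linear-algebra check suffices. The paper handles this by decomposing the hypothetical map into elementary links, invoking Corollary~\ref{COR:Key} to force $b=1$ and to pin down the first link, and then tracking how the singular point of multiplicity $a$ on $E_1$ propagates through the subsequent links to produce a singularity on the image that contradicts $\varphi(\Gamma)=\Gamma$. This is exactly the kind of birational bookkeeping your proposal tries to avoid, and it is unavoidable here.

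Conversely, the cases you flag as the main obstacle, $iv)$ and $v)$, are in fact the easy ones: once Theorem~\ref{Thm:Main} gives the injection $\Aut(\A^2,\Gamma)\hookrightarrow\Aut(\Gamma)$, you only need to exhibit $T\rtimes\langle\sigma\rangle$ inside $\Aut(\A^2,\Gamma)$ and observe that over $\kk$ its image already exhausts $\Aut(\Gamma)(\kk)\simeq\kk^{*}\rtimes\mathbb{Z}/2\mathbb{Z}$. No delicate descent is required beyond checking the explicit matrices.
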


\begin{proof}
By Theorem~\ref{Thm:Main}, we can suppose that $\Aut(\AA^2, \Gamma)$ is an algebraic group, and using Lemma~\ref{Lem:GL2}, we can  {moreover assume}
that 
	$\Gamma$ is preserved by a torus $T\subset\GL(2,\K)$.
	
	There exists an element $\psi\in\GL(2,\KK)$ which conjugates $T(\KK)$ to 
	\[
		\lambda \mapsto 
		\begin{pmatrix}
			\lambda^a & 0 \\
			0 & \lambda^b
		\end{pmatrix}
	\]
	for integers $a$, $b$ with $(a, b) \neq (0, 0)$ and $a$, $b$ are coprime. 
	If $a$ or $b$ is equal to zero, then it follows that $\Gamma$
	is a line. Hence we can assume that $a$ and $b$ are non-zero.
	Now, let $(x_0, y_0) \in \psi(\Gamma(\KK))$, such that $(x_0, y_0) \neq (0, 0)$.
	If $x_0$ or $y_0$ is zero, then $\Gamma$ is again a line. Hence, we may assume
	that $x_0 \neq 0 \neq y_0$. By symmetry, we can assume that $b>0$. Then the equation of 
	$\psi(\Gamma)$ is 
	\begin{center}\begin{tabular}{ll}
	$y_0^a x^b - x_0^b y^a=0$ &if $a>0 \,$,\\
	$y_0^a x^b y^{-a} - x_0^b=0$ &if $a<0 \,$.\end{tabular}\end{center}
	In the first case, we can assume that $a>1$ or $b>1$, otherwise 
	the curve is a line. 
	This implies that $a\not=b$, since both are coprime, and we can thus assume that $a>b$. The  Galois group $\Gal(\KK/\K)$ fixes the unique point at infinity, which is then defined over $\K$. Hence, we can assume that the unique point of the closure of $\Gamma$ at infinity is the direction $y=0$, and that the equation of $\Gamma$ is the polynomial $(\alpha x+\beta y)^b-(\gamma y)^a\in \K[x,y]$, for some $\alpha,\beta,\gamma\in \KK$, $\alpha\gamma\not=0$. If $\beta/\alpha\in \K$, we make a change of coordinates $(x,y)\mapsto (x-\frac{\beta}{\alpha} y,y)$ and obtain an equation of the form $x^b-\lambda y^a$ for some $\lambda\in \K$, as desired. It remains to see that $\beta/\alpha$ always belong to $\K$. If the characteristic of $\K$ is zero, we develop $(\alpha x+\beta y)^b$ and divide the coefficient of $x^{b-1}y$ by the coefficient of $x^b$. If the characteristic of $\K$ is $p>0$, we write $b=qm$, where $q$ is a power of $p$ and $p$ does not divide $m$. Developing, we find
	$$(\alpha x+\beta y)^b=(\alpha^q x^q+\beta^qy^q)^m=\alpha^{b} x^b+m\alpha^{b-q}\beta^qx^{b-q}y^q+\dots$$
	hence $(\beta/\alpha)^q\in \K$, which implies, since $\K$ is a perfect field, that $\beta/\alpha\in \K$.
	
		In the second case ($a<0$), the closure of the 
	curve $\psi(\Gamma)$ has two points at infinity in $\p^2$. If $a\not=-b$, the two points have 
	different multiplicities. 
	In consequence the  {Galois group $\Gal(\KK/\K)$} has to fix the two 
	points at infinity of 
	$\Gamma$, which are thus defined over~$\K$. We can assume that 
	these points correspond to the 
	directions $x=0$ and $y=0$, and that $\psi\in \GL(2,\KK)$ is diagonal. 
	The equation of 
	$\Gamma$ is then of the form $x^by^a-\lambda$ for some 
	$\lambda\in \K^{*}$, and we get case $iii)$.
	The only remaining case is when 
	$(-a,b)=(1,1)$ and the 
	two points at infinity of $\Gamma$ are exchanged by $\Gal(\KK/\K)$. 
	The equation of 
	$\psi(\Gamma)$ being of the form $xy=x_0 y_0$, the equation 
	of $\Gamma$ is of 
	the form 
	\[
		\lambda x^2+\mu xy+\nu y^2=1 \, ,
	\]
	where $\lambda,\mu,\nu\in\K$, and $\lambda x^2+\mu xy+\nu y^2\in \K[x,y]$ is irreducible. When the characteristic of $\K$ is not $2$, we can make a change of coordinates $(x,y)\mapsto (x-\frac{\mu}{2 \lambda} y,y)$ and assume that $\mu=0$. The two points at infinity are thus given by $\lambda x^2+\nu y^2=0$. Because the two points are not defined over $\K$, we find that $-\nu\lambda$ is not a square in $\K$, and obtain $iv)$. If the characteristic of $\K$ is $2$, the elements $\lambda,\nu$ are squares in $\K$, since $\K$ is perfect. Making a diagonal change of variables, we can then assume that $\lambda=\nu=1$. Then $x^2+\mu x+1$ has no root in $\K$, and we get $v)$.

	It remains to prove that $\Aut(\A^2,\Gamma)$ has the desired form. 
	Case $i)$ is a direct calculation.
	 In cases $ii), iii),iv), v)$, it can be checked that the group given are contained in $\Aut(\A^2,\Gamma)$, so we just need to see that there is no other automorphism. The group homomorphism $\Aut(\A^2,\Gamma)\to \Aut(\Gamma)$ being injective, the only case to consider is the curve $x^by^a-\lambda=0$, with $(a,b)\not=(1,1)$, and to prove that there is no automorphism of $\A^2$ inducing on $\Gamma$ an``exchange" of the two points at infinity. These two points are {$p_1=(1:0:0)\in \p^2$ and $p_2=(0:1:0) \in \p^2$}, and have multiplicity $a$ and $b$ respectively on $\Gamma$, and $\h_{\Gamma}(p_1) = \h_{\Gamma}(p_2) > 0$ if $b>1$. 
We can moreover assume $a>b\ge 1$. The hypothetic automorphism extends to a birational map $\varphi\colon\p^2\dasharrow \p^2$ which is not an automorphism of $\p^2$, and thus decomposes into a sequence of elementary links $\varphi=\varphi_m \cdots  \varphi_1$ (Proposition~\ref{PROP:ExistenceDecomposition}). By Corollary~\ref{COR:Key}, we have $b=1$ and the point blown-up by the first link $\varphi_1\colon (\p^2,L_{\p^2})\dasharrow (\FF_1,E_1\cup L_{\FF_1})$ is $p_1$. 
Looking at the equation of $\Gamma$ in $\A^2$, we can describe the closure  of $\Gamma$ on $\FF_1$. This projective curve is smooth, intersects transversally $L_{\FF_1}$ in one point  away from $E_1$, corresponding to $p_2$, and intersects $E_1$ in one point $q_1$, with multiplicity $a$, corresponding to $p_1$; this latter is moreover not on $L_{\FF_1}$. In consequence, the next links of type $\mathrm{II}$ do not affect the point $q_1$, and after the first link of type $\mathrm{III}$, the image of the curve is singular at a point of $\p^2$ with multiplicity $\ge a$ and height $1$. This point being not the base-point of the next elementary links, the image of $\Gamma$ by $\varphi$ has again a singular point, corresponding to the image of $p_1$. It is thus not possible to ``exchange" $p_1$ and $p_2$.
\end{proof}

\subsection{$\GG_a$-actions}\label{SecGa}
The classification of all  $\GG_a$-actions on $\AA^2$ is known when the ground 
{field} $\K$ is of characteristic $0$ \cite{Re68} or algebraically closed of positive characteristic \cite{Mi71}. The following lemma gives the generalisation of \name{Miyanishi}'s result to the case where $\K$ is perfect. The proof is probably known to the specialists, we include it for the sake of completeness and lack of reference.
\begin{prop}\label{Prop:MiyaPerfect}
	Assume that $\K$ is {a} perfect field of characteristic $p > 0$. 
	Then every $\GG_a$-action on $\AA^2$
	that is defined over~$\K$ has the form
	\[
		(t, x, y) \mapsto (x, y + tf_0(x) + t^p f_1(x) + \ldots + t^{p^n} f_n(x))
	\]
	{up to conjugation with a $\K$-automorphism of $\AA^2$.}
\end{prop}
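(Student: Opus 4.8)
The plan is to reduce to \name{Miyanishi}'s theorem over $\kk$ and then descend to the perfect field $\k$, after which an elementary computation pins down the exact shape of the action.

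First I would record the data: a $\GG_a$-action on $\AA^2$ defined over $\k$ is the same as a $\k$-algebra co-action $\rho\colon A\to A[t]$, with $A=\k[x,y]$, which is counital (composing $\rho$ with $t\mapsto 0$ gives the identity) and coassociative for the additive comultiplication $t\mapsto t+s$. We may assume the action is non-trivial (otherwise take all $f_i=0$), so its general orbits are one-dimensional and the invariant ring $A^{\GG_a}=\{a\in A\mid \rho(a)=a\}$ has dimension one. Since invariants commute with the flat base change $\k\to\kk$, we have $A^{\GG_a}\otimes_\k\kk=(A\otimes_\k\kk)^{\GG_a}$, and by \name{Miyanishi} \cite{Mi71} the latter is a polynomial ring $\kk[\bar f]$ generated by a coordinate of $\AA^2_\kk$. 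Hence $\operatorname{Spec} A^{\GG_a}$ is a $\k$-form of $\AA^1$; as $\k$ is perfect, every such form is trivial (\cite{Rus70}), so $A^{\GG_a}=\k[f]$ for some $f\in A$, and $f$ is in fact a coordinate of $\AA^2$ after base change to $\kk$, since every generator of $\kk[\bar f]$ is a coordinate.

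The key point -- and the step I expect to be the main obstacle -- is to promote $f$ to an honest coordinate of $\AA^2$ over $\k$, i.e.\ to find $g\in A$ with $A=\k[f,g]$. I would argue by descent. The fibre $\operatorname{Spec}(A/(f))$ is a $\k$-form of $\AA^1$ (it becomes $\AA^1$ over $\kk$, as $f$ is a coordinate there), so by \cite{Rus70} again $A/(f)=\k[\bar g]$ for a generator $\bar g$; lift $\bar g$ to some $g\in A$. Over $\kk$ the pair $(f,g)$ is then a coordinate system, because $g$ restricts to a generator of $\kk[x,y]/(f)\cong\kk[\text{one variable}]$ while $f$ is already a coordinate. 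Thus the inclusion $\k[f,g]\hookrightarrow A$ becomes an isomorphism after $\otimes_\k\kk$, hence is an isomorphism by faithful flatness, and $(f,g)$ is a coordinate system over $\k$. Conjugating the action by the automorphism $(x,y)\mapsto (f,g)$, we may henceforth assume that $x$ is invariant, i.e.\ $\rho(x)=x$.

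It then remains to analyse an action fixing $x$. Viewing $\rho$ universally over $\GG_a=\operatorname{Spec}\k[t]$, the $R$-algebra endomorphism $y\mapsto\rho(y)$ of $R[y]$, with $R=\k[x,t]$, is an automorphism (its inverse is supplied by the group element $-t$), so $\rho(y)=u\,y+\beta$ with $u\in R^\ast=\k^\ast$ and $\beta\in R=\k[x,t]$; the counit axiom ($t=0$ gives the identity) forces $u=1$ and $\beta(x,0)=0$. Writing $g_t\colon y\mapsto y+\beta(x,t)$, coassociativity $g_s\circ g_t=g_{s+t}$ yields $\beta(x,s+t)=\beta(x,s)+\beta(x,t)$, so $\beta(x,t)$ is an additive polynomial in $t$ over the domain $\k[x]$ of characteristic $p$. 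Every such additive polynomial is a $p$-polynomial, $\beta(x,t)=\sum_{i=0}^n f_i(x)\,t^{p^i}$ with $f_i\in\k[x]$, which is exactly the asserted form. The only non-routine ingredients are \name{Miyanishi}'s theorem over $\kk$ and the two appeals to the triviality of forms of $\AA^1$ in \cite{Rus70}; once the coordinate $x$ is fixed, the remainder is the elementary computation just sketched.
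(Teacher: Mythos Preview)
Your overall strategy—invoke \name{Miyanishi} over $\kk$, descend to get a $\k$-coordinate on which the action is trivial, then identify the action on the second coordinate as an additive polynomial—is sound, and steps 1–3 and 5 are correct. The paper follows the same outline but carries out the descent (your step 4) by Galois cohomology rather than via forms of $\AA^1$.

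However, your step 4 contains a genuine gap. You claim that if $f$ is a coordinate of $\kk[x,y]$ and $g\in A$ is \emph{any} lift of a generator of $A/(f)$, then $(f,g)$ is a coordinate system of $\kk[x,y]$. This is false: take $f=x$ and $g=y+xy^2$. Then $g$ reduces to the generator $y$ of $\k[x,y]/(x)$, yet $(x,\,y+xy^2)$ is not a coordinate system (in characteristic $\neq 2$ the Jacobian $1+2xy$ is non-constant; in characteristic $2$ a direct degree argument shows $y\notin\k[x,\,y+xy^2]$). Controlling the single fibre $\{f=0\}$ is not enough; one must trivialise $A$ as a $\k[f]$-algebra, i.e.\ control all fibres simultaneously.

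This is exactly what the paper's argument supplies. Since the invariant ring $\kk[f]$ is stable under $G=\Gal(\kk/\k)$, the cocycle comparing the $\kk$-coordinate system $(f,h)$ with its Galois conjugates takes values in the Jonqui\`eres group $\J_n(\kk)\simeq(\kk^\ast)^2\ltimes(\kk\ltimes\kk[f]_{\le n})$, and $\H^1(G,\J_n(\kk))=0$ by the multiplicative and additive forms of Hilbert~90. In your language, this is the statement that every $\k[f]$-form of $\AA^1$ split by the separable base change $\k[f]\to\kk[f]$ is trivial, since $\Aut_{\kk[f]}(\AA^1_{\kk[f]})=\kk^\ast\ltimes\kk[f]$ has vanishing $\H^1$. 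Your two appeals to \cite{Rus70} give this only over the two base \emph{points} $\operatorname{Spec}\k$ and $\operatorname{Spec}\bigl(\k[f]/(f)\bigr)$, not over the base $\operatorname{Spec}\k[f]$; the missing ingredient is precisely the cohomological vanishing the paper invokes.
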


\begin{proof}
	We fix a non-trivial $\GG_a$-action on $\AA^2$, that is defined over $\K$.
	By \cite{Mi71} it follows that the $\GG_a$-action has the claimed form 
	over the algebraic closure $\KK$ 
	{(up to conjugation with a $\KK$-automorphism of $\AA^2$)}. 
	Thus there exists a {$\GG_a$-invariant}
	polynomial
	$f \in \KK[x, y]$ which is a variable of $\KK[x, y]$, i.e.\;which admits $g \in \KK[x, y]$ such that
	$\KK[f, g] = \KK[x, y]$. Let $G := \Gal(\KK/\K)$ be the Galois group. 
	Now, $\KK[f]$ is the subring
	of $\GG_a$-invariant polynomials of $\KK[x, y]$ and since the $\GG_a$-action is defined
	over~$\K$, the subspace $\KK[f]$ is invariant under $G$.
	Thus, the action of $G$ on $\KK[f, g] = \KK[x, y]$ is given by
	\[
		\sigma(f)=a_\sigma f + c_\sigma \, , \quad \sigma(g) = b_\sigma g + d_\sigma
	\]
	where $a_\sigma, b_\sigma \in \KK^\ast$, $c_\sigma \in \KK$ and $d_\sigma \in \KK[f]$.
	{It} is enough to show that the 1-cocycle
	\[
		G {\to} \J_n(\KK) \simeq (\KK^\ast)^2 \ltimes (\KK \ltimes \KK[f]) \, , \quad
		\sigma \mapsto (a_\sigma, b_\sigma, c_\sigma, d_\sigma)
	\]
	is a 1-coboundary. The vanishing of $\H^1(G, J_n(\KK))$ 
	follows from the vanishing of $\H^1(G, \KK^\ast)$ (see \cite[(6.2.1) Theorem]{NSW00})
	and from the vanishing of $\H^1(G, \KK[f]) = \varinjlim_n \H^1(G, \KK[f]_{\le n})$ 
	(see  \cite[(1.5.1) Proposition]{NSW00} and \cite[(6.1.1) Theorem]{NSW00}) by using exact sequences
	(here $\KK[f]_{\le n}$ denotes the polynomials in $f$ of degree $\leq n$).
\end{proof}

\begin{lem}\label{Lem:GaAffLine}
	Assume that $\K$ is perfect.
	Let $\Gamma \subseteq \AA^2$ be a closed geometrically irreducible curve that is defined over~$\K$ and
	assume that it is preserved under
	a non-trivial $\GG_a$-action $($defined over $\K)$. Then there exists an automorphism 
	$\varphi \colon \AA^2 \to \AA^2$
	such that $\varphi(\Gamma)$ is an affine line in $\AA^2$.
\end{lem}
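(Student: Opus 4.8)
The plan is to classify the geometrically irreducible curves $\Gamma$ admitting a nontrivial $\GG_a$-action and show that, up to an automorphism of $\A^2$, the only such curve is a line. First I would invoke Proposition~\ref{Prop:MiyaPerfect} (together with its characteristic-zero analogue \cite{Re68}) to put the $\GG_a$-action into a normal form: after conjugating by an automorphism of $\A^2$ we may assume the action is
\[
	(t,x,y)\mapsto (x,\,y+tf_0(x)+t^pf_1(x)+\dots+t^{p^n}f_n(x))
\]
in positive characteristic, and $(t,x,y)\mapsto (x,y+tf(x))$ in characteristic $0$. In either case the orbits of the action are contained in the fibres of the projection $\pi\colon\A^2\to\A^1$, $(x,y)\mapsto x$, and the ring of invariants is $\k[x]$.

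The key observation is that since $\Gamma$ is $\GG_a$-invariant and geometrically irreducible, I would show that $\pi$ restricts to a morphism $\Gamma\to\A^1$ whose generic fibre is a single $\GG_a$-orbit, hence an affine line; in particular $\Gamma$ is not contained in a fibre of $\pi$ (otherwise the action on $\Gamma$ would be trivial, contradicting geometric irreducibility plus the nontriviality hypothesis, or would force $\Gamma$ to be one of the invariant vertical lines, which is already a line). So $\pi|_\Gamma\colon\Gamma\to\A^1$ is dominant, and I would argue that it is in fact an isomorphism onto a line: the generic fibre of $\pi$ meets $\Gamma$ in an orbit, and because the action moves $y$ by an additive polynomial in $t$ with coefficients depending only on $x$, the defining equation of $\Gamma$ can be solved for $y$ as a rational — indeed polynomial — function of $x$ on the relevant locus. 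Concretely, writing the equation of $\Gamma$ and imposing invariance under the one-parameter group, I would deduce that $\Gamma$ is the graph $y=h(x)$ of a polynomial $h\in\k[x]$ (or a vertical line), and the automorphism $(x,y)\mapsto(x,y-h(x))$ then sends $\Gamma$ to the line $y=0$.

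The main obstacle I anticipate is controlling the positive-characteristic case, where the action is by a nontrivial \emph{additive} polynomial $a(t)=tf_0(x)+\dots+t^{p^n}f_n(x)$ rather than a single linear term; here the orbit through a point need not be the whole fibre, so I must verify that the generic fibre of $\pi$ restricted to $\Gamma$ really is a full orbit and that this forces $\Gamma$ to meet each fibre in exactly one point. The cleanest way around this is to use that $\k[x]$ is the invariant ring and that $\Gamma$ geometrically irreducible and invariant means its ideal is generated by an invariant element or that $\pi|_\Gamma$ has degree one; I would establish the degree-one claim by intersection-theoretic or valuation-theoretic bookkeeping along the fibration, exploiting that $\Gamma$ has no vertical component (being geometrically irreducible and not a vertical line). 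Once $\pi|_\Gamma$ is shown to be birational, hence an isomorphism since $\Gamma$ and $\A^1$ are both smooth in the relevant range, the graph description and the final coordinate change follow immediately, completing the proof that $\varphi(\Gamma)$ is an affine line.
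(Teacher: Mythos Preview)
Your normalization step via Proposition~\ref{Prop:MiyaPerfect} and \cite{Re68} is correct and matches the paper. After that, however, you aim at the wrong target: you try to show that $\Gamma$ is a graph $y=h(x)$, whereas in fact every geometrically irreducible invariant curve for the normalized action is a \emph{vertical} line $x=c$. A graph $y=h(x)$ is never invariant under a nontrivial action of the form $(t,x,y)\mapsto (x,y+p(t,x))$, since invariance would force $p(t,x)\equiv 0$.

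Here is why only vertical lines occur. Over $\kk$, for any $x_0$ with $p(t,x_0)\not\equiv 0$ as a polynomial in $t$, the map $t\mapsto p(t,x_0)$ is a nonzero additive polynomial, hence surjective onto $\kk$; thus the $\GG_a$-orbit through $(x_0,y_0)$ is the entire fibre $\{x=x_0\}$. Consequently, if an invariant irreducible curve $\Gamma$ contains a point with $p(t,x_0)\not\equiv 0$, then $\Gamma$ contains, hence equals, that vertical line. Otherwise $\Gamma$ lies in the fixed locus $\{f_i(x)=0\ \forall i\}$, a finite union of vertical lines, and irreducibility again forces $\Gamma=\{x=c\}$. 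This is exactly the paper's one-line conclusion ``every geometrically irreducible invariant curve is a line,'' and it works uniformly in all characteristics.

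Your ``main case'' where $\pi|_\Gamma$ is dominant is therefore empty: if $\pi|_\Gamma$ were dominant, each fibre $\Gamma\cap\{x=x_0\}$ would be finite and $\GG_a$-invariant, hence consist of fixed points; a dense set of fixed points makes $\Gamma$ pointwise fixed, forcing $\Gamma$ into the vertical fixed locus, a contradiction. Your worry about additive polynomials in positive characteristic is a red herring --- surjectivity of nonzero additive polynomials over $\kk$ is all you need, and it makes the argument easier, not harder.
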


\begin{proof}
	By Proposition~\ref{Prop:MiyaPerfect} (in case $\char(\K)  = p > 0$) and by 
	\cite{Re68} (in case $\char(\K) = 0$) we can conjugate the action to an action of the form {$(t, x,y)\mapsto (x,y+p(t,x))$} where $p\in \K[t,x]$ is a {non-zero}
polynomial. Hence, every geometrically irreducible $\GG_a$-invariant 
curve is a line in $\A^2$.
\end{proof}
Lemma~\ref{Lem:GaAffLine} implies that {the} second case of Lemma~\ref{Lem:Dim12} does not 
{occur}. The proof of Theorem~$\ref{Thm:Class}$ is now clear:

\begin{proof}[Proof of Theorem~$\ref{Thm:Class}$]
By Theorem~\ref{Thm:Main}, either $\Gamma$ is a line or  $\Aut(\A^2,\Gamma)$ is an algebraic subgroup of $\Aff(\A^2)$ or $\J_n$ for some $n\ge 1$. If $\Aut(\A^2,\Gamma)$ is an algebraic group of positive dimension, it contains a closed one-dimensional torus by Lemmas~\ref{Lem:Dim12} and~\ref{Lem:GaAffLine}. The description of the possible cases follows then from Proposition~\ref{Prop:Torus}.
\end{proof}

\subsection{The case of finite groups}
\label{FiniteGrps.subsec}
There are plenty of examples where $\Aut(\A^2,\Gamma)$ is finite. The simplest way to get such examples {is} to take a finite subgroup  $G\subset\Aut(\A^2)$ and to look for invariant curves. Since $G$ has a finite action on $\K[x,y]$, one can find a lot of invariant polynomials, and in practice most of them are irreducible.

In characteristic zero, the group $G$ is reductive and thus contained, up to conjugation, in $\GL_2$ (see \cite{Kam79}). In positive characteristic, there are however plenty of non-linearisable subgroups of $\Aut(\A^2)$, and, as far as we know, there is for the moment no classification of the conjugacy classes of such subgroups.

\begin{exa}
Let $\K$ be of characteristic $p>0$. For any integer $a>1$,
$$\varphi\colon (x,y)\mapsto (x,y+x^{a})$$ is a non-linearisable automorphism of order $p$ of $\A^2$, which preserves the family of curves of the form 
\[
	y^p-yx^{a(p-1)}=q(x) \, ,
\]
where $q\in \K[x]$.
\end{exa}

\end{document}